\definecolor{ANDREW}{RGB}{255,127,0}
\definecolor{darkred}{RGB}{229,0,51}
\definecolor{darkblue}{RGB}{0,0,153}
\definecolor{darkgreen}{RGB}{22,178,22}
\theoremstyle{plain}
\newtheorem{proposition}{Proposition}[section]
\newtheorem{theorem}[proposition]{Theorem}
\newtheorem{lemma}[proposition]{Lemma}
\newtheorem{corollary}[proposition]{Corollary}
\theoremstyle{definition}
\newtheorem{definition}[proposition]{Definition}
\newtheorem{observation}[proposition]{Observation}
\theoremstyle{remark}
\newtheorem{remark}[proposition]{Remark}
\theoremstyle{plain}
\newtheorem{thmintro}{Theorem}
\newtheorem{corintro}[thmintro]{Corollary}
\DeclareMathOperator{\Aut}{Aut}
\DeclareMathOperator{\Vol}{Vol}
\DeclareMathOperator{\PGL}{PGL}
\DeclareMathOperator{\Ric}{Ric} 
\DeclareMathOperator{\Isom}{Isom} 
\DeclareMathOperator{\Div}{div}
\DeclareMathOperator{\Divw}{div^{\Wc}}
\DeclareMathOperator{\Bc}{\mathcal{B}}
\DeclareMathOperator{\Wc}{\mathcal{W}}
\DeclareMathOperator{\Jc}{\mathcal{J}}
\DeclareMathOperator{\Hb}{\mathbb{H}}
\DeclareMathOperator{\Nb}{\mathbb{N}}
\DeclareMathOperator{\Pb}{\mathbb{P}}
\DeclareMathOperator{\Rb}{\mathbb{R}}
\newcommand{\abs}[1]{\left|#1\right|}
\newcommand{\norm}[1]{\left\|#1\right\|}
\newcommand{\wt}[1]{\widetilde{#1}}
\newcommand{\wh}[1]{\widehat{#1}}
\newcommand{\ip}[1]{\left\langle #1\right\rangle}
\renewcommand{\leq}{\leqslant}
\renewcommand{\geq}{\geqslant}
\renewcommand{\epsilon}{\varepsilon}
\begin{document}

\title{Entropy rigidity of Hilbert and Riemannian metrics}
\author{Thomas Barthelm\'e}
\address{Department of Mathematics, Pennsylvania State University, University Park, State College, PA 16802}
\email{thomas.barthelme@queensu.ca}

\author{Ludovic Marquis}
\address{IRMAR, Universit\'e de Rennes, Rennes, France}
\email{ludovic.marquis@univ-rennes1.fr}

\author{Andrew Zimmer}
\address{Department of Mathematics, University of Chicago, Chicago, IL 60637.}
\email{aazimmer@uchicago.edu}

\date{\today}
\keywords{ }
\subjclass[2010]{}

\begin{abstract}
In this paper we provide two new characterizations of real hyperbolic $n$-space using the Poincar\'e exponent of a discrete group and the volume growth entropy. The first characterization is in the space of Riemannian metrics with Ricci curvature bounded below and generalizes a result of Ledrappier and Wang. The second is in the space of Hilbert metrics and generalizes a result of Crampon.
\end{abstract}

\maketitle

\section{Introduction}

Suppose $(X,d)$ is a proper metric space and $o \in X$ is some point. For any discrete group $\Gamma$ acting by isometries on $(X,d)$, we define the \emph{Poincar\'e}, or \emph{critical}, \emph{exponent} of $\Gamma$ as 
\[
 \delta_{\Gamma}(X,d) :=  \limsup_{r \rightarrow +\infty} \frac{1}{r} \log \# \{\gamma \in \Gamma \mid d(o, \gamma \cdot o ) \leq r \}.
\]
It is straightforward to show that this quantity does not depend on the choice of $o \in X$. If $X$ has a measure $\mu$ one can also define the \emph{volume growth entropy} as
\begin{equation*}
h_{vol}(X,d,\mu) := \limsup_{r \rightarrow +\infty} \frac{1}{r} \log \mu \left( B_r(o) \right)
\end{equation*}
where $B_r(o)$ is the open ball of radius $r$ about $o$. This quantity also does not depend on $o \in X$. If the measure $\mu$ is $\Isom(X,d)$-invariant, finite on bounded sets, and positive on open sets then a simple computation (see the proof of Lemma 4.5 in~\cite{Q2006}) shows
\begin{equation*}
\delta_\Gamma(X,d) \leq h_{vol}(X,d,\mu).
\end{equation*}
When additional assumptions are made, the Poincar\'e exponent and the volume growth entropy may coincide. For instance, if the action of $\Gamma$ on $(X,d)$ is cocompact, a simple argument shows that they are equal (again see the proof of Lemma 4.5 in~\cite{Q2006}).

These two invariants have a long and interesting history, as they are intimately related to the geometric and dynamical properties of the space $(X,d)$ (see for instance~\cite{M1979, FM1982}). Moreover, they are often linked to rigidity phenomenons (see for instance~\cite{BCG1995, BCG1996}).

In this paper we present two new characterizations of real hyperbolic $n$-space using the Poincar\'e exponent of a discrete group and the volume growth entropy. The first characterization (Theorem~\ref{thm:riem_finite_volume}) is in the space of Riemannian metrics with Ricci curvature bounded below and generalizes a result of Ledrappier and Wang~\cite{LW2010}. The second characterization (Theorem~\ref{thm:hilbert_finite_vol}) is in the space of Hilbert metrics and generalizes a result of Crampon~\cite{Cra2009}. This second result will follow from Theorem~\ref{thm:riem_finite_volume} and a recent result of Tholozan~\cite{Tho2015}.

\subsection{Riemannian metrics}\

Suppose $(X,g)$ is a complete, simply connected Riemannian $n$-manifold with $\Ric \geq -(n-1)$. Then the Bishop-Gromov volume comparison theorem implies that 
\begin{align*}
h_{vol}(X,g) \leq n-1
\end{align*}
(in the Riemannian case we always use the Riemannian volume form when considering the volume growth entropy). In particular, the volume growth entropy is maximized when $(X,g)$ is isometric to real hyperbolic $n$-space. There are many other examples which maximize volume growth entropy, but if $(X,g)$ has ``enough'' symmetry then it is reasonable to expect that $h_{vol}(X,g)=n-1$ if and only if $(X,g)$ is isometric to real hyperbolic $n$-space. This was recently proved by Ledrappier and Wang when $X$ covers a compact manifold: 

\begin{theorem}\cite{LW2010}
Let $(X,g)$ be a complete, simply connected Riemannian $n$-manifold and $\Gamma$ be a discrete group acting by isometries on $X$. Suppose that
\begin{enumerate}
 \item $\Ric \geq -(n-1)$;
 \item $\Gamma$ acts properly and freely on $X$ and $\Gamma \backslash X$ is compact;
 \item $h_{vol}(X,g)= n-1$.
\end{enumerate}
Then $X$ is isometric to the real hyperbolic space $\Hb^n$.
\end{theorem}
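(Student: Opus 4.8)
The plan is to manufacture a positive, $\Gamma$-invariant subharmonic function on $X$ out of a Patterson--Sullivan conformal density, and then to use compactness of the quotient to force equality in the Laplacian comparison theorem everywhere. Since $\Gamma\backslash X$ is compact, the volume growth entropy coincides with the Poincaré exponent, so $\delta := \delta_\Gamma(X,g) = h_{vol}(X,g) = n-1$. Fixing a basepoint $o$, I would construct a Patterson--Sullivan conformal density $\{\mu_x\}$ of dimension $\delta$ on the boundary at infinity (more robustly, the horofunction boundary) $\partial_\infty X$, and set
\[
u(x) = \int_{\partial_\infty X} e^{-\delta\,\beta_\xi(x,o)}\, d\mu_o(\xi),
\]
where $\beta_\xi(x,o) = b_\xi(x) - b_\xi(o)$ is the Busemann (horofunction) cocycle. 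The conformal transformation rule $d\mu_{\gamma o} = e^{-\delta\beta_\xi(\gamma o,o)}\,d\mu_o$ together with the equivariance $\beta_{\gamma\xi}(\gamma x,\gamma y) = \beta_\xi(x,y)$ gives $u(\gamma x) = u(x)$, so $u$ descends to the compact manifold $M = \Gamma\backslash X$; moreover $0 < u(x) \le e^{\delta d(o,x)}\mu_o(\partial_\infty X) < \infty$.

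Next I would compute the Laplacian. Since $\abs{\nabla b_\xi} \equiv 1$, one has $\Delta_x e^{-\delta\beta_\xi(x,o)} = \delta\bigl(\delta - \Delta b_\xi(x)\bigr) e^{-\delta\beta_\xi(x,o)}$, hence $\Delta u(x) = \delta\int\bigl(\delta - \Delta b_\xi(x)\bigr) e^{-\delta\beta_\xi(x,o)}\, d\mu_o(\xi)$. The Laplacian comparison theorem applied to $\Ric \geq -(n-1)$ gives the pointwise bound $\Delta b_\xi \leq n-1 = \delta$ (in the barrier/distributional sense, since horofunctions are only $C^{1,1}$ under a mere Ricci lower bound). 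Therefore the integrand is nonnegative and $u$ is subharmonic. As $M$ is closed, $\int_M \Delta u\, dV = 0$ by the divergence theorem, which forces $\Delta u \equiv 0$. Feeding this back, for every $x$ we obtain $\Delta b_\xi(x) = n-1$ for $\mu_o$-almost every $\xi$; by Fubini, for $\mu_o$-almost every $\xi$ the equality $\Delta b_\xi \equiv n-1$ holds at almost every, hence by continuity every, point.

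The geometric rigidity then follows from the Riccati equation along the gradient geodesic $\gamma_v$ of such a $b_\xi$. Writing $U(t)$ for the second fundamental form of the horosphere and $\varphi = \tr U = \Delta b_\xi\circ\gamma_v \equiv n-1$, the traced Riccati equation $\varphi' + \norm{U}^2 + \Ric(\dot\gamma_v) = 0$ yields $\norm{U}^2 = -\Ric(\dot\gamma_v) \leq n-1$, while Cauchy--Schwarz gives $\norm{U}^2 \geq \varphi^2/(n-1) = n-1$. Equality throughout forces $U \equiv \mathrm{Id}$ and $\Ric(\dot\gamma_v) = -(n-1)$; substituting $U \equiv \mathrm{Id}$ into $U' + U^2 + R = 0$ gives $R(\cdot,\dot\gamma_v)\dot\gamma_v = -\mathrm{Id}$, i.e.\ every plane containing $\dot\gamma_v$ has sectional curvature $-1$. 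For cocompact $\Gamma$ the density $\mu_o$ has full support, so the admissible directions $\dot\gamma_v \in S_oX$ are dense; by continuity of the curvature tensor all sectional curvatures at $o$, and hence at every point, equal $-1$. A complete, simply connected manifold of constant curvature $-1$ is isometric to $\Hb^n$.

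I expect the main obstacle to be the asymptotic and regularity analysis rather than the closing ODE rigidity. Because only a Ricci lower bound is assumed, $X$ need not be nonpositively curved, so the boundary at infinity, the uniqueness of rays, and the passage from horofunctions to initial directions in $S_oX$ must all be handled through the horofunction compactification rather than a Hadamard structure; and because horofunctions are merely $C^{1,1}$, the identity $\Delta u \equiv 0$, the a.e.\ equality $\Delta b_\xi = n-1$, and its propagation along entire geodesic rays (needed to run the equality case of Cauchy--Schwarz) must be justified distributionally or via the smoothing built into the integral defining $u$. This is precisely the point at which an integral formula for the volume entropy, controlling $\Delta b_\xi$ in an averaged sense, would streamline the argument.
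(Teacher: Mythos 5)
You run the same engine as the paper (and as \cite{LW2010}) up to a point: your $u$ is precisely the function $x\mapsto \nu_x(\partial\wh{X})$, it is subharmonic by the Laplacian comparison theorem, constant on the compact quotient by the maximum principle, and hence $e^{-(n-1)b_\xi}$ is harmonic, i.e.\ $\Delta b_\xi\equiv n-1$ distributionally, for $\mu_o$-almost every $\xi$. (The paper extracts this from the laminated space $X_M$ and the integral formula of Theorem~\ref{thm:integral_formula} together with heat-kernel smoothing; in the cocompact case your maximum-principle shortcut is legitimate.) The genuine gap is your closing step. You pass from ``$\mu_o$-almost every $\xi$ is special'' to ``the directions $\nabla b_\xi(x)$ are dense in $S_xX$'' by asserting that $\mu_o$ has full support. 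This is unjustified on both counts: cocompactness does not give minimality of the $\Gamma$-action on the Busemann boundary, so full support is not automatic; and even granting full support, the special set is merely of full $\mu_o$-measure, the assignment $\xi\mapsto \nabla b_\xi(x)$ has no continuity properties you can invoke, and nothing rules out that all special $\xi$ induce the same direction field. In particular, nothing in your argument excludes the degenerate scenario where $\mu_o$ is a single atom at a $\Gamma$-fixed point $\xi_0$, so that ``almost every $\xi$'' means exactly one $\xi$.

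Moreover, with only one special Busemann function the conclusion you want is simply false: $\Delta b\equiv n-1$ and $\norm{\nabla b}=1$ give $\mathrm{Hess}\,b=g-db\otimes db$, i.e.\ a warped-product structure $g=dt^2+e^{2t}h$, and taking $(N,h)$ complete, simply connected, with $\Ric_h\geq 0$ but not flat yields a manifold with $\Ric\geq -(n-1)$ carrying such a function yet with non-constant curvature. This is exactly why the final step in the paper (Proposition~\ref{prop:final_step}) is a dichotomy rather than a density argument: if there exist \emph{two} distinct special points $\xi_0\neq\xi_1$, rigidity follows from the proof of Theorem 3.3 in \cite{W2008} (a warped-product analysis, which your Riccati computation does not replace); if the special point is unique, it is fixed by $\Gamma$, so $\nabla\xi_0$ descends to a vector field $V$ on the compact quotient with $\norm{V}\leq 1$ and $\Div V\equiv n-1>0$, contradicting $\int_M\Div V\,dV=0$. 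Your proof needs this dichotomy (or an equivalent closing argument); the full-support/density claim cannot substitute for it.
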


Our first new characterization of real hyperbolic space replaces compactness with finite volume, but with the cost of replacing $h_{vol}$ by $\delta_\Gamma$. 

\begin{thmintro} \label{thm:riem_finite_volume}
Let $(X,g)$ be a complete, simply connected Riemannian $n$-manifold and $\Gamma$ be a discrete group acting by isometries on $X$. Suppose that
\begin{enumerate}
 \item $\Ric \geq -(n-1)$;
 \item $X$ has bounded curvature;
 \item $\Gamma$ acts properly and freely on $X$ and $\Gamma \backslash X$ has finite volume;
 \item the Poincar\'e exponent satisfies $\delta_{\Gamma}(X,g)= n-1$.
\end{enumerate}
Then $X$ is isometric to the real hyperbolic space $\Hb^n$.
\end{thmintro}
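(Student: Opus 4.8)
The plan is to first reduce hypothesis (4) to the statement that the volume growth entropy is maximal, and then to run a version of the Ledrappier--Wang argument in which cocompactness is replaced by the finite-volume and bounded-curvature hypotheses. I would begin by combining the general inequality $\delta_\Gamma(X,g) \le h_{vol}(X,g)$ (which applies here because the Riemannian volume is $\Isom(X,g)$-invariant, finite on bounded sets, and positive on open sets) with the Bishop--Gromov bound $h_{vol}(X,g) \le n-1$ coming from $\Ric \ge -(n-1)$. Hypothesis (4) then collapses the chain $n-1 = \delta_\Gamma(X,g) \le h_{vol}(X,g) \le n-1$, so that $h_{vol}(X,g) = n-1$ and, crucially, $\delta_\Gamma(X,g) = h_{vol}(X,g)$. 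This second equality is my intended substitute for cocompactness: it is the condition I would use to guarantee that the Patterson--Sullivan/Bowen--Margulis construction at the exponent $\delta_\Gamma$ yields a finite, flow-invariant measure on $T^1(\Gamma\backslash X)$ whose support is not lost to the cusps.

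Next I would set up the infinitesimal comparison geometry. For each $\xi \in \partial_\infty X$ the Busemann function $b_\xi$ is $C^2$ (here bounded curvature is used to guarantee the requisite regularity and uniform control of $\nabla b_\xi$ and its derivatives), and the shape operator $U_\xi$ of the horospheres centred at $\xi$ satisfies a Riccati equation along the geodesics asymptotic to $\xi$. The Laplacian comparison theorem gives the pointwise bound $\Delta b_\xi = \tr U_\xi \le n-1$, with equality at a point precisely when the radial sectional curvatures there equal $-1$. The aim is to promote the global equality $h_{vol}(X,g)=n-1$ to the pointwise equality $\Delta b_\xi \equiv n-1$.

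The core step is an integral (entropy) formula expressing $h_{vol}(X,g)$ as an average of $\tr U_\xi$ against the finite invariant measure produced above. Combined with the pointwise inequality $\tr U_\xi \le n-1$, the equality $h_{vol}(X,g)=n-1$ forces $\tr U_\xi = n-1$ on the support of that measure; ergodicity together with density of the support (which I expect to follow from finite volume and $\delta_\Gamma = h_{vol}$) and continuity of $\Delta b_\xi$ then upgrade this to $\Delta b_\xi \equiv n-1$ on all of $X$, for every $\xi$. This is where I expect the main difficulty to lie: in the compact case the relevant measure is automatically finite with full support and the integral identity is routine, whereas in the finite-volume case one must rule out escape of mass into the cusps and justify the integral formula on a noncompact space. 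Controlling the cusp geometry is precisely the role of the bounded-curvature hypothesis.

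Finally, once $\Delta b_\xi \equiv n-1$, the Riccati equation together with $\Ric \ge -(n-1)$ forces the stable shape operator to be $U_\xi = \mathrm{Id}$ (the maximal-trace solution), so that every horosphere is totally umbilic with principal curvatures $1$. Running this over all $\xi \in \partial_\infty X$ shows that the sectional curvature of every radial plane through every point equals $-1$; since every tangent $2$-plane is radial for some asymptotic geodesic, $X$ has constant sectional curvature $-1$. As $X$ is complete and simply connected, it is isometric to $\Hb^n$.
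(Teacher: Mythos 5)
Your opening reduction $n-1 = \delta_\Gamma \leq h_{vol} \leq n-1$ is correct but is not where the difficulty lies; the genuine gaps are in the regularity and dynamical inputs you assume, and in your final step. First, in this theorem $X$ is only assumed to satisfy $\Ric \geq -(n-1)$ with bounded sectional curvature --- there is no sign condition on the curvature at all. Consequently Busemann functions are merely Lipschitz: bounded curvature does \emph{not} give $C^2$ regularity of $b_\xi$, smooth horospheres, or a shape operator $U_\xi$ obeying a Riccati equation, and there is no Hadamard-type ideal boundary $\partial_\infty X$ with the properties you use. (In the paper, bounded curvature serves a completely different purpose: it yields the Gaussian heat-kernel estimates of Lemma~\ref{lem:hk_est}, which make the smoothings $F_t(x,\xi) = \int_X p_t(x,y)\,\xi(y)\,dy$ of the Lipschitz functions $\xi \in \partial \wh{X}$ well defined, with $\norm{\nabla_x F_t} \leq e^{(n-1)t}$ and $\Delta_x F_t \leq n-1$.) Likewise, without negative curvature there is no Anosov geodesic flow, no Bowen--Margulis measure, and no ergodicity or full-support statement available in this generality; the paper's substitute for cocompactness is much softer and purely potential-theoretic: the function $x \mapsto \nu_x(\partial \wh{X})$ is superharmonic by the Laplacian comparison theorem, positive and $\Gamma$-invariant, hence constant on the finite-volume quotient, which forces the laminated measure $\nu$ on $X_M = \Gamma \backslash (X \times \partial \wh{X})$ to be finite; one then applies the integration-by-parts identity of Theorem~\ref{thm:integral_formula} to $Y_t = \nabla_x F_t$, with cut-off functions controlling the noncompact part, and lets $t \rightarrow 0$.

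Second, your endgame fails even if one grants all the regularity. The argument can only produce $\Delta \xi \equiv n-1$ for $\nu_o$-almost every $\xi$, i.e.\ for at least one point of the Busemann boundary --- not for every $\xi \in \partial_\infty X$ --- and a \emph{single} Busemann function with constant Laplacian $n-1$ does not force constant curvature: Bochner's formula only yields $\nabla^2 \xi_0 = g - d\xi_0 \otimes d\xi_0$, i.e.\ a warped-product structure with umbilic level sets, which is far from hyperbolicity. Your step ``every tangent $2$-plane is radial for some asymptotic geodesic'' needs the equality for \emph{all} boundary points, which the entropy argument cannot deliver. The paper closes exactly this gap with the dichotomy of Proposition~\ref{prop:final_step}: if there exist two distinct Busemann points with constant Laplacian $n-1$, rigidity follows from Wang's theorem; if instead $\xi_0$ is unique, then it is fixed by $\Gamma$, so $\nabla \xi_0$ descends to a vector field $V$ on $M$ with $\norm{V} \leq 1$ and $\Div V \equiv n-1 > 0$, which is impossible on a finite-volume manifold by the divergence theorem (Karp). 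Your proposal contains neither this dichotomy nor any other mechanism to handle the case of a unique invariant Busemann point, and that case is precisely where the finite-volume hypothesis does its work.
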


\begin{remark} As in~\cite{LW2010}, it is possible to prove versions of Theorem~\ref{thm:riem_finite_volume} for K{\"a}hler or Quaternionic manifolds, but we will not pursue such matters here.
\end{remark}

Theorem~\ref{thm:riem_finite_volume} is a true generalization of Ledrappier and Wang's result: when $\Gamma \backslash X$ is assumed to be compact, $X$ has bounded curvature and the Poincar\'e exponent and the volume growth entropy coincide. Although our proof will follow the general outline of their argument, only assuming finite volume introduces a number of technical complications. Finally, the bounded curvature assumption is important  for our argument, but it may be possible to remove it. 

\subsection{Hilbert metrics}

Given a proper convex open set $\Omega \subset \Pb(\Rb^{n+1})$, we let $H_\Omega$ be the associated Hilbert metric. The Hilbert metric is a complete length metric on $\Omega$ which is invariant under the group of projective automorphisms of $\Omega$
\begin{equation*}
\Aut(\Omega) : = \{ \varphi \in \PGL_{n+1}(\Rb) : \varphi \Omega  = \Omega \}.
\end{equation*}
Moreover, if $\Omega$ is projectively equivalent to the ball $\Bc$, then $(\Omega, H_\Omega)$ is the Klein--Beltrami model of real hyperbolic $n$-space. 

Tholozan recently proved the following estimate for the volume growth entropy:

\begin{theorem}\cite{Tho2015} If $\Omega \subset \Pb(\Rb^{n+1})$ is a proper convex open set then 
\begin{equation*}
h_{vol}(\Omega, H_\Omega, \mu_B) \leq n-1
\end{equation*}
where $\mu_B$ is the Busemann--Hausdorff volume associated with $(\Omega, H_{\Omega})$ (or any bi-Lipschitz equivalent measure).
\end{theorem}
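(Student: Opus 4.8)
The plan is to fix a basepoint $o \in \Omega$, pass to an affine chart in which $\Omega$ is a bounded convex open subset of $\Rb^n$ with $o$ the origin, and estimate the Busemann--Hausdorff volume of the metric balls $B_r = B_r(o)$ directly. Writing $B_x \subset T_x\Omega \cong \Rb^n$ for the unit ball of the Hilbert--Finsler norm at $x$, the Busemann--Hausdorff density is $\omega_n/\Vol(B_x)$ times Lebesgue measure (with $\omega_n$ the volume of the Euclidean unit ball), so that
\[
\mu_B(B_r) = \omega_n \int_{B_r} \frac{dx}{\Vol(B_x)}.
\]
The entire content of the theorem, and the reason the sharp constant $n-1$ appears, lies in controlling how fast $\Vol(B_x)$ decays as $x \to \partial\Omega$. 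A key preliminary remark is that the sharp exponent must be extracted from the convex geometry of $\Omega$ itself: one cannot replace $B_x$ by its John ellipsoid and invoke Bishop--Gromov, because that alters the metric (not merely the measure) and the resulting Riemannian metric carries no lower Ricci bound, so the sharp constant would be destroyed. What one may freely use, however, is the latitude ``or any bi-Lipschitz equivalent measure'' in the statement: replacing $\mu_B$ by a measure whose density differs by a factor depending only on $n$ leaves the volume growth entropy unchanged.

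The first step exploits exactly this latitude to reduce to a projectively natural measure. A direct cross-ratio computation shows that the radial function of $B_x$ in a direction $w \in S^{n-1}$ is the harmonic mean of the two distances $s^{\pm}(x,w)$ from $x$ to $\partial\Omega$ along $\pm w$; dualizing via $h_{B_x^\circ}(w) = 1/r_{B_x}(w)$, this is precisely the statement that $B_x^{\circ}$ is the central symmetrization $\tfrac12\big((\Omega - x)^{\circ} + (x-\Omega)^{\circ}\big)$ of the polar body of $\Omega - x$. Combining the Rogers--Shephard inequality with the Blaschke--Santal\'o inequality and its reverse (Bourgain--Milman) — all of whose constants depend on $n$ alone — yields
\[
\frac{\omega_n}{\Vol(B_x)} \ \asymp_n\ \Vol\big((\Omega - x)^{\circ}\big) \ =\ \frac{1}{n}\int_{S^{n-1}} P(x,w)^{-n}\, d\sigma(w),
\]
where $P(x,w) = h_\Omega(w) - \ip{x,w}$ is the distance from $x$ to the supporting hyperplane of $\Omega$ with outer unit normal $w$. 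Since the two densities agree up to a dimensional factor, they define bi-Lipschitz equivalent measures, and it suffices to bound the growth of the right-hand measure, which I call $\mu_{\mathrm{proj}}$.

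The second, and sharp, step estimates $\mu_{\mathrm{proj}}(B_r)$ by Fubini and the coarea formula, using two geometric inputs. First, monotonicity of the Hilbert metric under inclusion $\Omega \subset \{P(\cdot,w)\ge 0\}$ together with the half-space computation gives the boundary blow-up $H_\Omega(o,x) \ge \tfrac12\log\big(\operatorname{dist}(o,\partial\Omega)/\operatorname{dist}(x,\partial\Omega)\big)$, so that every $x \in B_r$ satisfies $\operatorname{dist}(x,\partial\Omega) \ge c\,e^{-2r}$, and hence $P(x,w) \ge c\,e^{-2r}$ for all $w$. Second, convexity forces slices parallel to a supporting hyperplane to be small: near the contact point one has $\Vol_{n-1}\big(\Omega \cap \{P(\cdot,w)=t\}\big) \lesssim t^{(n-1)/2}$ (for a smooth strictly convex boundary point this is just the statement that the level sets are $(n-1)$-balls of radius $\sim \sqrt{t}$). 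Writing $\mu_{\mathrm{proj}}(B_r) \le \tfrac1n \int_{S^{n-1}}\int_{\{P(\cdot,w)\ge c e^{-2r}\}} P(x,w)^{-n}\, dx\, d\sigma(w)$, slicing the inner integral in $t = P(x,w)$ (the coarea factor is $\abs{\nabla P} = 1$), and inserting the cutoff and the slice bound gives
\[
\int_{c e^{-2r}}^{\diam(\Omega)} t^{-n} \cdot t^{(n-1)/2}\, dt \ \asymp\ \big(e^{-2r}\big)^{-(n-1)/2} \ =\ e^{(n-1)r},
\]
whence $\mu_{\mathrm{proj}}(B_r) \lesssim e^{(n-1)r}$. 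The exponent $n-1 = 2\cdot\tfrac{n-1}{2}$ is exactly the product of the boundary blow-up rate and the convex slicing exponent, and equality occurs for ellipsoids, consistently with the hyperbolic case.

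The main obstacle is the uniformity of the slice estimate over all proper convex $\Omega$, particularly when $\partial\Omega$ fails to be smooth or strictly convex. If $\partial\Omega$ contains a flat face with outer normal $w_0$, then for $w = w_0$ the slice does not shrink as $t \to 0$ and the inner integral can grow as fast as $e^{2(n-1)r}$; such directions form a $\sigma$-null set, but one must show that contributions from nearby $w$ remain $O\big(e^{(n-1)r}\big)$ after integration, and corners produce analogous phenomena where the pointwise Jacobian exceeds $e^{(n-1)r}$ while the integral does not. Controlling these degenerate directions uniformly — equivalently, proving the slicing inequality with a direction-dependent constant that is $\sigma$-integrable on $S^{n-1}$ — is the crux. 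I would handle it by first establishing the bound for smooth, strictly convex bodies with curvature bounded below, where the constant is uniform, and then passing to general $\Omega$ by an exhaustion and approximation argument, using that the estimate above is stable under the relevant limits.
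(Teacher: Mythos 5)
Your reduction steps are sound: the identity $B_x^\circ=\tfrac12\bigl((\Omega-x)^\circ+(x-\Omega)^\circ\bigr)$, the use of Rogers--Shephard together with Blaschke--Santal\'o and Bourgain--Milman to replace $\mu_B$ by the projectively natural measure with density $\asymp_n\Vol\bigl((\Omega-x)^\circ\bigr)$, and the lower bound $\operatorname{dist}(x,\partial\Omega)\ge c\,e^{-2r}$ on the Hilbert ball $B_r$ are all correct, and your exponent arithmetic would indeed yield the sharp constant $n-1$. But the step you yourself call the crux is genuinely missing, and the repair you propose cannot supply it. Since the pointwise slice bound $\Vol_{n-1}\bigl(\Omega\cap\{P(\cdot,w)=t\}\bigr)\lesssim t^{(n-1)/2}$ fails whenever a supporting hyperplane touches $\partial\Omega$ along a face, what you actually need is a direction-averaged inequality $\int_{S^{n-1}}\Vol_{n-1}\bigl(\Omega\cap\{P(\cdot,w)=t\}\bigr)\,d\sigma(w)\le C(\Omega)\,t^{(n-1)/2}$ with $C(\Omega)$ finite for \emph{every} convex body. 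Your plan --- prove the pointwise bound for smooth bodies with principal curvatures $\ge\kappa_0>0$, where the constant is $\asymp\kappa_0^{-(n-1)/2}$ (caps of depth $t$ have widths $\sim\sqrt{t/\kappa_0}$), then approximate --- founders on the non-uniformity of that constant: by Blaschke's rolling theorem, a body with all principal curvatures $\ge\kappa_0$ lies inside a ball of radius $1/\kappa_0$ tangent at each boundary point, so a Hausdorff $\epsilon$-approximation of a body whose boundary contains a flat disc of radius $\rho$ forces $\kappa_0\lesssim\epsilon/\rho^2\to0$. The approximating inequalities $\mu^{(k)}(B_r)\le C_k\,e^{(n-1)r}$ then pass to the limit only as the vacuous statement $\mu(B_r)\le\infty$; the estimate is ``stable under the relevant limits'' only if the constant is uniform, and nothing in your outline makes it so.

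For the averaged inequality one would want $C(\Omega)$ controlled by approximation-stable data: heuristically, for smooth bodies the average slice area is $\approx t^{(n-1)/2}\int_{\partial\Omega}K^{1/2}\,d\mathcal{H}^{n-1}$ (pull the $w$-integral back through the Gauss map), and Cauchy--Schwarz gives $\int_{\partial\Omega}K^{1/2}\le\abs{S^{n-1}}^{1/2}\,\mathcal{H}^{n-1}(\partial\Omega)^{1/2}$, which is uniform over approximating sequences. Turning this heuristic into an inequality valid for all $t$ and all convex bodies is exactly the hard convex-geometric content of the theorem (it lives in the floating-body/affine-surface-area circle of ideas), and it is essentially the approximability program of Berck--Bernig--Vernicos and Vernicos, which before Tholozan had succeeded only in low dimension or under $C^{1,1}$ regularity. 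That is why the paper does not argue this way at all: the result is quoted from Tholozan, whose proof (the mechanism recorded as Theorem~\ref{thm:tho} above) compares the Hilbert metric with the Blaschke metric via $B_\Omega<H_\Omega+1$, invokes Calabi's bound $\Ric\ge-(n-1)$ for the Blaschke metric together with the Benoist--Hulin volume comparison, and concludes by Bishop--Gromov, bypassing the slicing problem entirely. So your proposal is a genuinely different, direct route, but as written its decisive estimate is assumed rather than proved.
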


In particular the volume growth entropy is maximized when $\Omega$ is projectively equivalent to the unit ball. There are many other examples which maximize volume growth entropy, for instance, Berck, Bernig and Vernicos \cite{BBV2010} proved that, if $\partial \Omega$ is $C^{1,1}$ then 
\begin{equation*}
h_{vol}(\Omega, H_\Omega, \mu_B) = n-1.
\end{equation*}
However, once again, assuming that $\Omega$ has ``enough'' symmetry then one should expect that $h_{vol}(\Omega, H_\Omega, \mu_B)=n-1$ if and only if $\Omega$ is projectively equivalent to the unit ball. For instance, Crampon proved the following:

\begin{theorem} \cite{Cra2009}\label{thm:crampon}
Suppose $\Omega \subset \Pb(\Rb^{n+1})$ is a proper strictly convex open set and there exists a discrete group $\Gamma \leq \Aut(\Omega)$ that acts properly, freely, and cocompactly. Then $h_{vol}(\Omega, H_\Omega, \mu_B) \leq n-1$ with equality if and only if $\Omega$ is projectively isomorphic to $\Bc$ (and in particular $(\Omega, H_\Omega)$ is isometric to $\Hb^n$). 
\end{theorem}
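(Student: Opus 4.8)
The inequality $h_{vol}(\Omega, H_\Omega, \mu_B) \leq n-1$ is precisely Tholozan's theorem, and one direction of the equivalence is immediate: if $\Omega$ is projectively isomorphic to $\Bc$ then $H_\Omega$ is the hyperbolic metric and $\mu_B$ the hyperbolic volume, so $h_{vol} = n-1$. The plan is thus to prove the converse by reducing it to Theorem~\ref{thm:riem_finite_volume}. First I would upgrade the entropy hypothesis to a statement about the Poincar\'e exponent: since $\mu_B$ is $\Aut(\Omega)$-invariant, finite on bounded sets, and positive on open sets, and since $\Gamma$ acts cocompactly, the remark in the introduction gives $\delta_\Gamma(\Omega, H_\Omega) = h_{vol}(\Omega, H_\Omega, \mu_B) = n-1$.

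The next, and central, step is to manufacture a Riemannian metric on $\Omega$ to which Theorem~\ref{thm:riem_finite_volume} applies. The natural candidate is the Blaschke metric $g$ coming from the complete hyperbolic affine sphere asymptotic to $\partial\Omega$ produced by Cheng--Yau. This $g$ is canonical, hence $\Aut(\Omega)$-invariant (so $\Gamma$ acts properly and freely on $(\Omega,g)$ by isometries), complete, and $\Omega$ is contractible, so $(\Omega,g)$ is complete and simply connected. The key analytic input is that Tholozan's argument for the bound $h_{vol}\le n-1$ in fact proceeds through a comparison with this Riemannian structure: after normalization it yields $\Ric_g \geq -(n-1)$ together with a distance domination $d_g \leq d_{H_\Omega}$ and a volume domination for $\mu_B$. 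Granting this, I would verify the four hypotheses of Theorem~\ref{thm:riem_finite_volume}. Hypothesis (1) is the Ricci bound; hypotheses (2) and (3) are automatic because $g$ is $\Gamma$-invariant and $\Gamma\backslash\Omega$ is compact, so the curvature is bounded and the volume is finite. For hypothesis (4), the inclusion $\{d_{H_\Omega}\le r\}\subseteq\{d_g\le r\}$ forces
\[
\#\{\gamma\in\Gamma : d_g(o,\gamma\cdot o)\le r\} \;\ge\; \#\{\gamma\in\Gamma : d_{H_\Omega}(o,\gamma\cdot o)\le r\},
\]
so $\delta_\Gamma(\Omega,g) \geq \delta_\Gamma(\Omega,H_\Omega) = n-1$, while Bishop--Gromov (via $\delta_\Gamma \le h_{vol}$) gives $\delta_\Gamma(\Omega,g)\le h_{vol}(\Omega,g)\le n-1$; hence $\delta_\Gamma(\Omega,g)=n-1$. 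Theorem~\ref{thm:riem_finite_volume} then yields that $(\Omega,g)$ is isometric to $\Hb^n$, i.e.\ the Blaschke metric has constant sectional curvature $-1$.

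Finally I would translate this metric rigidity into projective rigidity of $\Omega$ using affine differential geometry. For a hyperbolic affine sphere the Gauss equation expresses the deviation of the Blaschke sectional curvatures from $-1$ in terms of the Pick (cubic) form; constant curvature $-1$ therefore forces the Pick form to vanish identically, and by the Maschke--Pick--Berwald theorem an affine hypersurface with vanishing Pick form is a quadric. Hence the affine sphere of $\Omega$ is a quadric, so $\Omega$ is an ellipsoid, i.e.\ projectively isomorphic to $\Bc$. I expect the main obstacle to be exactly the comparison step: one must extract from Tholozan's work not merely the entropy inequality as stated, but the sharp pairing of the normalization $\Ric_g \ge -(n-1)$ with the distance domination $d_g \le d_{H_\Omega}$, since it is the simultaneous validity of both (with matching constants) that pins $\delta_\Gamma(\Omega,g)$ to the extremal value $n-1$ and lets the rigidity of Theorem~\ref{thm:riem_finite_volume} be invoked. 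A secondary point requiring care is confirming that the Blaschke metric is the metric implicitly used by Tholozan, rather than merely a bi-Lipschitz competitor, for a bi-Lipschitz change would perturb the exponent away from the sharp value.
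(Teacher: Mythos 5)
Your proposal follows essentially the same route as the paper: the paper proves this statement (in fact without strict convexity, as Theorem~\ref{thm:hilbert_compact}) by noting that cocompactness gives $\delta_\Gamma(\Omega,H_\Omega)=h_{vol}(\Omega,H_\Omega,\mu_B)$ and then applying Theorem~\ref{thm:riem_finite_volume} to the Blaschke metric, whose hypotheses are verified exactly as you verify them (Calabi's Ricci bound, Tholozan's comparison to transfer the critical exponent, bounded curvature and finite volume, and the affine-sphere correspondence to conclude that $\Omega$ is an ellipsoid). The only cosmetic differences are that the paper routes through the finite-covolume case (Theorem~\ref{thm:hilbert_finite_vol}) instead of invoking Theorem~\ref{thm:riem_finite_volume} directly in the compact case, and that the Ricci bound $\Ric \geq -(n-1)$ is Calabi's theorem rather than part of Tholozan's argument, whose actual contribution is the comparison $B_\Omega < H_\Omega + 1$ --- the additive constant $+1$ being harmless for the exponent inequality you need.
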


\begin{remark} For the Hilbert metric, strict convexity of $\Omega$ is somewhat analogous to negative curvature. In particular, for a strictly convex set the Hilbert metric is uniquely geodesic, that is every pair of points are joined by a unique geodesic. Moreover, Benoist~\cite{B2004} proved that when $\Omega$ is strictly convex and has a compact quotient then the induced geodesic flow is Anosov and is $C^{1+\alpha}$. In his proof of Theorem~\ref{thm:crampon}, Crampon first shows that the topological entropy of this flow coincides with the volume growth entropy and then he uses techniques from hyperbolic dynamics to prove rigidity. For a general convex open set, the Hilbert metric may not be uniquely geodesic, but one can consider a natural ``geodesic line'' flow obtained by flowing along the geodesics that are lines segments in $\Pb(\Rb^d)$. However this flow is only $C^0$ and will have ``parallel'' flow lines. Thus Crampon's approach via smooth hyperbolic dynamics will not extend, 
at least directly, to the general case.
\end{remark}

Associated to every proper convex open set $\Omega \subset \Pb(\Rb^{n+1})$ is a Riemannian distance $B_\Omega$ on $\Omega$ called the \emph{Blaschke}, or \emph{affine}, distance (see, for instance, \cite{Lof2001,BH2013}). This Riemannian distance is $\Aut(\Omega)$-invariant and by a result of Calabi~\cite{C1972} has Ricci curvature bounded below by $-(n-1)$. In particular, if $d\Vol$ is the associated Riemannian volume form then the Bishop-Gromov volume comparison theorem implies that 
\begin{equation*}
h_{vol}(\Omega, B_\Omega, d\Vol) \leq n-1.
\end{equation*}

Benoist and Hulin \cite{BH2013} showed that the Hilbert distance and the Blaschke distance are bi-Lipschitz equivalent. Tholozan recently proved the following new relation:

\begin{theorem}\cite{Tho2015}\label{thm:tho}
If $\Omega \subset \Pb(\Rb^{n+1})$ is a proper convex open set, then 
\begin{equation*}
B_\Omega < H_\Omega +1.
\end{equation*}
In particular, 
\begin{equation*}
h_{vol}(\Omega, H_\Omega, \mu_B) \leq h_{vol}(\Omega, B_\Omega, d\Vol)
\end{equation*}
and if $\Gamma \leq \Aut(\Omega)$ is a discrete group then 
\begin{equation*}
\delta_\Gamma(\Omega, H_\Omega) \leq \delta_\Gamma(\Omega, B_\Omega).
\end{equation*}
\end{theorem}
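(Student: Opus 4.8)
The plan is to prove the pointwise inequality $B_\Omega < H_\Omega + 1$ first and then extract the two consequences formally, since an additive constant is invisible to exponential growth rates. For the consequences, fix a basepoint $o \in \Omega$. The inequality gives the ball containment
\[
\{x \in \Omega : H_\Omega(o,x) < r\} \subseteq \{x \in \Omega : B_\Omega(o,x) < r+1\}
\]
for all $r>0$. Applying $\mu_B$, invoking the Benoist--Hulin bi-Lipschitz equivalence of $H_\Omega$ and $B_\Omega$ to compare $\mu_B$ with $d\Vol$ up to a multiplicative constant, and then taking $\frac1r\log(\cdot)$ followed by $\limsup_{r\to\infty}$ yields $h_{vol}(\Omega,H_\Omega,\mu_B) \le h_{vol}(\Omega,B_\Omega,d\Vol)$; both the shift $r \mapsto r+1$ and the multiplicative constant wash out in the limit. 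Applying the same containment to the orbit points $\gamma\cdot o$ shows that every $\gamma$ counted for $\delta_\Gamma(\Omega,H_\Omega)$ at radius $r$ is counted for $\delta_\Gamma(\Omega,B_\Omega)$ at radius $r+1$, whence $\delta_\Gamma(\Omega,H_\Omega)\le\delta_\Gamma(\Omega,B_\Omega)$.

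It remains to prove $B_\Omega < H_\Omega + 1$. Here I would use that $B_\Omega$ is a genuine length (Riemannian) metric, while the $H_\Omega$-geodesic from $x$ to $y$ is the projective line segment $\sigma$. Thus
\[
B_\Omega(x,y) \le \mathrm{length}_{B_\Omega}(\sigma) = \int_0^L \phi(t)\,dt,
\]
where $\sigma$ is parametrized by $H_\Omega$-arclength on $[0,L]$ with $L = H_\Omega(x,y)$, and $\phi(t)$ denotes the Blaschke norm of the $H_\Omega$-unit velocity $\dot\sigma(t)$. The problem then reduces to the one-variable estimate $\int_0^L(\phi(t)-1)\,dt < 1$ along every projective segment, i.e.\ to understanding the ratio of the two metrics in the direction of a fixed line.

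To control $\phi$ I would use the affine-geometric description of $B_\Omega$: it is the affine metric on the hyperbolic affine sphere $\Sigma \subset \Rb^{n+1}$ asymptotic to the cone over $\Omega$, equivalently the metric built from the Hessian of the convex Cheng--Yau solution of the Monge--Amp\`ere equation cutting out $\Sigma$. Along $\sigma$ one expects $\phi(t) \to 1$ as the endpoints of the segment approach $\partial\Omega$, since along a geodesic both metrics are asymptotically hyperbolic, and the excess $\phi-1$ should be integrable with total mass strictly below $1$. The strategy is to compare $\Omega$ with explicit model domains adapted to the two exit points of the line carrying $\sigma$ (cones and slabs), on which the affine sphere, and hence $\phi$, can be written down, and to transfer the resulting bound via a monotonicity/comparison principle for the Monge--Amp\`ere equation under inclusion of convex sets.

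The main obstacle is exactly this last step: the Benoist--Hulin equivalence is only coarse, so extracting the \emph{sharp} additive constant $1$ requires the fine asymptotics of the affine sphere along a geodesic together with a genuine monotonicity statement for the Blaschke norm under passing to sub-domains. Identifying the correct direction of that monotonicity and verifying that the model computation really produces total excess below $1$ is where the difficulty concentrates; strictness of the final inequality should then follow because the comparison with the model is strict away from the degenerate simplex/half-space configurations.
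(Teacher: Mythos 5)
First, note that the paper itself gives no proof of this statement: it is quoted verbatim from Tholozan \cite{Tho2015}, and the only content the paper implicitly supplies is that the two displayed asymptotic inequalities follow formally from the pointwise bound $B_\Omega < H_\Omega + 1$. Your handling of that formal part is correct and matches what the paper takes for granted: the containment $\{H_\Omega(o,\cdot) < r\} \subseteq \{B_\Omega(o,\cdot) < r+1\}$, the comparison of $\mu_B$ with $d\Vol$ up to a multiplicative constant via the Benoist--Hulin bi-Lipschitz theorem \cite{BH2013}, and the fact that additive shifts in the radius and multiplicative constants on the measure are invisible to exponential growth rates together give both the volume-entropy and the Poincar\'e-exponent inequalities.

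The genuine gap is the pointwise inequality $B_\Omega < H_\Omega + 1$ itself, which is the entire content of Tholozan's theorem and which your proposal does not prove. The reduction to the bound $\int_0^L (\phi(t)-1)\,dt < 1$ along every projective chord is correct, but everything past that point is a plan whose decisive steps you explicitly leave open, and at least two of them are real obstructions rather than technicalities. (i) The heuristic that $\phi(t) \to 1$ because ``both metrics are asymptotically hyperbolic along a geodesic'' is false in general: for a simplex, the Blaschke metric is the flat affine metric of the Calabi affine sphere $\{x_1 \cdots x_{n+1} = \mathrm{const}\}$, while the Hilbert metric is isometric to a polyhedral normed space, so neither metric is asymptotically hyperbolic; what must actually be proved is that the \emph{restrictions} of the two metrics to chord directions are asymptotic in an integrated sense, with excess uniformly below the sharp constant $1$, independently of $\Omega$ and of the chord --- and that statement essentially \emph{is} the theorem. (ii) The comparison direction is wrong for the models you name: cones and slabs adapted to the exit points of the chord \emph{contain} $\Omega$, and for the expected Hilbert-like anti-monotonicity under inclusion (the direction consistent with round balls, where the Blaschke metric coincides with the Klein metric) a comparison with a larger domain bounds $b_\Omega$ from \emph{below} along the chord, which is useless for an upper bound on $B_\Omega(x,y)$. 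An upper bound would require either explicit inscribed comparison domains (you name none) or a comparison principle at the level of the Cheng--Yau Monge--Amp\`ere solution together with a nontrivial mechanism converting bounds on the solution into bounds on the Hessian terms entering the Blaschke norm. Since the sharp, domain-independent additive constant cannot be produced by any coarse bi-Lipschitz comparison, your proposal stands as a plausible research strategy but not as a proof; for the actual argument one must consult \cite{Tho2015}.
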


Tholozan's result allows one to transfer from the Hilbert setting to the Riemannian setting where many more analytic tools are available. For instance, putting together Tholozan's result, the rigidity result of Ledrappier and Wang~\cite{LW2010} stated above, and some folklore properties of the Blaschke metric, one can remove the strictly convex hypothesis from Crampon's theorem (see Section~\ref{sec:Hilbert} for details):

\begin{theorem}\label{thm:hilbert_compact}
Suppose $\Omega \subset \Pb(\Rb^{n+1})$ is a proper convex open set and there exists a discrete group $\Gamma \leq \Aut(\Omega)$ which acts properly, freely, and cocompactly. Then $h_{vol}(\Omega, H_\Omega, \mu_B) \leq n-1$ with equality if and only if $\Omega$ is projectively isomorphic to $\Bc$ (and in particular $(\Omega, H_\Omega)$ is isometric to $\Hb^n$). 
\end{theorem}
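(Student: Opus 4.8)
The plan is to transfer the equality statement from the Hilbert metric to the Blaschke metric $B_\Omega$, apply the rigidity theorem of Ledrappier and Wang to $(\Omega, B_\Omega)$, and then invoke the affine-geometric fact that a proper convex domain whose Blaschke metric is hyperbolic must be an ellipsoid. The entire argument is a bookkeeping exercise tying together the quoted results via the cocompact coincidence of the Poincar\'e exponent and the volume growth entropy.

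First I would record the relevant properties of the Blaschke metric. Since $\Omega$ is convex it is contractible, so $(\Omega, B_\Omega)$ is simply connected; it is complete (a folklore consequence of the Cheng--Yau theory of hyperbolic affine spheres); by Calabi it satisfies $\Ric \geq -(n-1)$; and since $B_\Omega$ is $\Aut(\Omega)$-invariant, $\Gamma$ acts on $(\Omega, B_\Omega)$ by isometries. Because cocompactness, properness, and freeness of the $\Gamma$-action are properties of the action on the topological space $\Omega$ alone, $\Gamma$ continues to act properly, freely, and cocompactly on $(\Omega, B_\Omega)$. Thus $(\Omega, B_\Omega)$ together with $\Gamma$ satisfies hypotheses (1)--(3) of the Ledrappier--Wang theorem, and it remains only to verify the entropy hypothesis (3) in the equality case.

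The inequality $h_{vol}(\Omega, H_\Omega, \mu_B) \leq n-1$ is immediate from Tholozan's entropy bound (alternatively from $h_{vol}(\Omega, H_\Omega, \mu_B) \leq h_{vol}(\Omega, B_\Omega, d\Vol) \leq n-1$ via Theorem~\ref{thm:tho} and Bishop--Gromov). For the equality case, suppose $h_{vol}(\Omega, H_\Omega, \mu_B) = n-1$. Since the action is cocompact and $\mu_B$ is $\Aut(\Omega)$-invariant, finite on bounded sets, and positive on open sets, the Poincar\'e exponent and the volume growth entropy coincide, so $\delta_\Gamma(\Omega, H_\Omega) = n-1$. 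By Theorem~\ref{thm:tho} we then have $\delta_\Gamma(\Omega, B_\Omega) \geq \delta_\Gamma(\Omega, H_\Omega) = n-1$. On the other hand, cocompactness applied to the invariant measure $d\Vol$ gives $\delta_\Gamma(\Omega, B_\Omega) = h_{vol}(\Omega, B_\Omega, d\Vol)$, which is at most $n-1$ by Bishop--Gromov. Chaining these, $h_{vol}(\Omega, B_\Omega, d\Vol) = n-1$, so the Ledrappier--Wang theorem applies and shows that $(\Omega, B_\Omega)$ is isometric to $\Hb^n$.

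Finally I would invoke the folklore rigidity statement: a proper convex domain whose Blaschke metric has constant sectional curvature $-1$ is projectively isomorphic to $\Bc$. Concretely, for the hyperbolic affine sphere asymptotic to the cone over $\Omega$, the Gauss equation expresses the curvature of $B_\Omega$ as $-1$ plus a quadratic expression in the Pick (cubic) form; constant curvature $-1$ forces the Pick form to vanish, and a vanishing Pick form characterizes quadrics, so $\Omega$ is projectively isomorphic to $\Bc$. The converse direction of the equivalence is easy: if $\Omega$ is projectively isomorphic to $\Bc$ then $(\Omega, H_\Omega)$ is the Klein--Beltrami model of $\Hb^n$ and $\mu_B$ agrees up to a constant with hyperbolic volume, whence $h_{vol}(\Omega, H_\Omega, \mu_B) = n-1$. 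The main obstacle is this last affine-differential-geometric step, namely extracting from ``$B_\Omega$ is hyperbolic'' the vanishing of the Pick form and hence that $\Omega$ is an ellipsoid; every other step is routine once the cocompact identity $\delta_\Gamma = h_{vol}$ and the cited theorems are in hand.
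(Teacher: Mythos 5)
Your proof is correct, and it follows the same broad architecture as the paper's argument (transfer from $H_\Omega$ to the Blaschke metric via Tholozan, apply a Riemannian rigidity theorem, conclude with affine sphere theory), but the rigidity input you use is genuinely different and this changes the dependency structure. The paper deduces Theorem~\ref{thm:hilbert_compact} as an immediate corollary of its finite-volume Theorem~\ref{thm:hilbert_finite_vol}, whose proof runs through the paper's new Riemannian theorem (Theorem~\ref{thm:riem_finite_volume}); that route requires verifying the bounded-curvature hypothesis (Lemma~\ref{lem:bounded_curvature}, via Benz\'ecri cocompactness) and rests on all of the machinery of Section 2. You instead invoke the original compact-quotient theorem of Ledrappier--Wang \cite{LW2010}, which needs only completeness, simple connectivity, $\Ric \geq -(n-1)$ (Calabi \cite{C1972}), a proper, free, cocompact action, and $h_{vol}=n-1$; to supply that last hypothesis you use the cocompact identity $\delta_\Gamma = h_{vol}$ twice --- once for $H_\Omega$ and once for $B_\Omega$ --- together with Theorem~\ref{thm:tho} and Bishop--Gromov, whereas the paper needs it only for $H_\Omega$ because Theorem~\ref{thm:riem_finite_volume} is stated in terms of $\delta_\Gamma$. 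Your route is precisely the one sketched in the paper's introduction (Tholozan plus Ledrappier--Wang plus folklore properties of the Blaschke metric), is self-contained relative to the prior literature, and avoids the bounded-curvature lemma entirely; what the paper's route buys instead is that the same argument covers the finite-volume case. Your final step --- constant curvature $-1$ forces, via the affine Gauss equation, the vanishing of the Pick form and hence that $\Omega$ is a quadric --- is the standard proof of the fact the paper cites as \cite[Theorem 1]{Lof2001}, so the two arguments agree there; you also spell out the easy converse direction, which the paper leaves implicit.
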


Using our generalization of Ledrappier and Wang's result we get a second new characterization of real hyperbolic space.

\begin{thmintro}\label{thm:hilbert_finite_vol}
Suppose $\Omega \subset \Pb(\Rb^{n+1})$ is a proper convex open set and there exists a discrete group $\Gamma \leq \Aut(\Omega)$ which acts properly, freely, and with finite co-volume (with respect to $\mu_B$). Then $\delta_\Gamma(\Omega, H_\Omega) \leq n-1$ with equality if and only if $\Omega$ is projectively isomorphic to $\Bc$ (and in particular $(\Omega, H_\Omega)$ is isometric to $\Hb^n$). 
\end{thmintro}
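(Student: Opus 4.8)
The plan is to deduce Theorem~\ref{thm:hilbert_finite_vol} from Theorem~\ref{thm:riem_finite_volume} by transferring the problem from the Hilbert metric to the Blaschke metric, exactly as Theorem~\ref{thm:hilbert_compact} was deduced from the Ledrappier--Wang theorem. First I would record the folklore/established properties of the Blaschke metric $B_\Omega$: it is a complete, $\Aut(\Omega)$-invariant Riemannian metric on $\Omega$, its Ricci curvature satisfies $\Ric \geq -(n-1)$ by Calabi~\cite{C1972}, and the Blaschke and Hilbert distances are bi-Lipschitz equivalent by Benoist--Hulin~\cite{BH2013}. Since $\Omega$ is a convex domain in projective space it is simply connected, so $(\Omega, B_\Omega)$ is a complete, simply connected Riemannian $n$-manifold. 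The group $\Gamma \leq \Aut(\Omega)$ acts by isometries of $B_\Omega$, and by hypothesis it acts properly and freely; because $\mu_B$ (Busemann--Hausdorff, i.e.\ Hilbert) volume and the Blaschke Riemannian volume are bi-Lipschitz equivalent, finite co-volume with respect to $\mu_B$ is equivalent to finite co-volume with respect to $d\Vol$. Thus hypotheses (1), (3) of Theorem~\ref{thm:riem_finite_volume} are in hand, and bi-Lipschitz equivalence of the two metrics is what will let me compare critical exponents.

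Next I would analyze the critical exponent. Tholozan's inequality $B_\Omega < H_\Omega + 1$ (Theorem~\ref{thm:tho}) gives $\delta_\Gamma(\Omega, H_\Omega) \leq \delta_\Gamma(\Omega, B_\Omega)$, so the hypothesis $\delta_\Gamma(\Omega, H_\Omega) = n-1$ yields $\delta_\Gamma(\Omega, B_\Omega) \geq n-1$. For the reverse inequality I would invoke the general bound $\delta_\Gamma \leq h_{vol}$ (valid since the Blaschke volume is $\Isom$-invariant, finite on bounded sets, and positive on open sets) together with the Bishop--Gromov consequence $h_{vol}(\Omega, B_\Omega, d\Vol) \leq n-1$ coming from $\Ric \geq -(n-1)$; combining these gives $\delta_\Gamma(\Omega, B_\Omega) \leq n-1$. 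Hence $\delta_\Gamma(\Omega, B_\Omega) = n-1$, so hypothesis (4) of Theorem~\ref{thm:riem_finite_volume} holds.

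The one remaining ingredient needed to apply Theorem~\ref{thm:riem_finite_volume} is its hypothesis (2), that $(\Omega, B_\Omega)$ has bounded curvature. I would establish this from the existence of a finite co-volume (in particular, a nontrivial) group of isometries together with the convex-projective structure: the curvature tensor of the Blaschke metric is controlled by the projective (cubic form) invariants of $\partial\Omega$, and I expect that the relevant homogeneity/equivariance under $\Aut(\Omega)$, combined with the fact that the thick part of $\Gamma\backslash\Omega$ is compact, forces a uniform bound on the full curvature tensor and its behavior. This is the step I expect to be the main obstacle: unlike the cocompact case, where bounded curvature is immediate from compactness of the quotient, here I must control curvature on the cusps, and I would need to argue that the Blaschke geometry of the ends is uniformly comparable to that of a model cusp (or otherwise has uniformly bounded curvature) so that hypothesis (2) genuinely holds in the finite-volume setting.

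Once all four hypotheses of Theorem~\ref{thm:riem_finite_volume} are verified, that theorem gives that $(\Omega, B_\Omega)$ is isometric to $\Hb^n$. To conclude I would finish by upgrading this Riemannian isometry to a projective statement: a result identifying when $(\Omega, B_\Omega)$ is Blaschke-isometric to hyperbolic space (equivalently, when the cubic form vanishes) shows that $\Omega$ must be projectively isomorphic to the ball $\Bc$, whence $(\Omega, H_\Omega)$ is the Klein--Beltrami model and is itself isometric to $\Hb^n$. The converse direction---that the ball achieves equality---is immediate, since for $\Omega \cong \Bc$ both metrics coincide with the hyperbolic metric and $\delta_\Gamma = n-1$ for a finite co-volume group.
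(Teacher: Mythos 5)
Your overall strategy is exactly the paper's: pass to the Blaschke metric, verify the hypotheses of Theorem~\ref{thm:riem_finite_volume}, and conclude by identifying $\Omega$ with the ball. Several of your steps match the paper and are correct: Ricci bounded below by Calabi~\cite{C1972}; $\delta_\Gamma(\Omega,B_\Omega)=n-1$ via Tholozan's inequality (Theorem~\ref{thm:tho}) in one direction and $\delta_\Gamma \leq h_{vol} \leq n-1$ (Bishop--Gromov) in the other, which you spell out more explicitly than the paper does; finite co-volume for the Blaschke volume via comparability with $\mu_B$ (the paper cites \cite[Proposition 2.6]{BH2013}); and the final upgrade from a Riemannian isometry with $\Hb^n$ to projective equivalence with $\Bc$ via \cite[Theorem 1]{Lof2001}.

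The genuine gap is the bounded curvature hypothesis, and the route you sketch to fill it is the wrong idea, not just an omitted detail. You propose to extract curvature bounds from the group action: compactness of the thick part of $\Gamma\backslash\Omega$ plus a comparison of the ends with ``model cusps.'' But in this generality $\Omega$ need not be strictly convex nor have $C^1$ boundary, and the structure theory of the thin part (Margulis tubes versus parabolic regions $\Omega_\epsilon(\Lambda)$) that such an argument would require is exactly what is missing in the general convex case --- the paper needs separate work (Proposition~\ref{p:vice_versa}) even to relate strict convexity and $C^1$ boundary under a finite co-volume action. The paper's resolution (Lemma~\ref{lem:bounded_curvature}) is that the hypothesis is automatic and has nothing to do with $\Gamma$: the sectional curvature of the Blaschke metric of \emph{every} proper convex open set in $\Pb(\Rb^{n+1})$ is bounded in absolute value by a universal constant $C_n$ depending only on the dimension. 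The proof is short and purely local: by Benz\'ecri~\cite{Benz1960}, $\PGL_{n+1}(\Rb)$ acts cocompactly on the space of pointed proper convex open sets $(x,\Omega)$, the pointwise extremal sectional curvatures of the Blaschke metric are $\PGL_{n+1}(\Rb)$-invariant functions of $(x,\Omega)$, and they are continuous in the Benz\'ecri topology by \cite[Corollary 3.3]{BH2013}; an invariant continuous function on a cocompact $\PGL_{n+1}(\Rb)$-space is bounded. Replacing your cusp analysis with this universal statement is what closes the argument.
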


When $\Gamma \backslash \Omega$ is non compact but has finite volume, it is unclear whether or not $h_{vol}(\Omega, H_\Omega, \mu_B)$ and $\delta_\Gamma(\Omega, H_\Omega)$ coincide (for Riemannian negatively curved metrics, there exists groups acting with finite co-volume for which the volume entropy and the critical exponent are distinct \cite{DPPS2009}). However, when $\Omega$ has $C^1$ boundary and is strictly convex then Crampon and Marquis~\cite[Th\'eor\`eme 9.2]{CM2014_geodesic_flow} proved that these two asymptotic invariants coincide. We will prove that in the finite volume quotient case having $C^1$ boundary and being strictly convex are equivalent and thus establish:

\begin{corintro}\label{cor:hilbert_finite_vol_2}
Suppose $\Omega \subset \Pb(\Rb^{n+1})$ is a proper convex open set which is either strictly convex or has $C^1$ boundary and such that there exists a discrete group $\Gamma \leq \Aut(\Omega)$ which acts properly, freely, and with finite co-volume (with respect to $\mu_B$). Then $h_{vol}(\Omega, H_\Omega, \mu_B) \leq n-1$ with equality if and only if $\Omega$ is projectively isomorphic to $\Bc$ (and in particular $(\Omega, H_\Omega)$ is isometric to $\Hb^n$). 
\end{corintro}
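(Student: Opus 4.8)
The inequality $h_{vol}(\Omega, H_\Omega, \mu_B) \leq n-1$ is exactly Tholozan's entropy bound \cite{Tho2015}, and one direction of the equality case is immediate: if $\Omega$ is projectively isomorphic to $\Bc$ then $(\Omega, H_\Omega)$ is isometric to $\Hb^n$, so $h_{vol}(\Omega, H_\Omega, \mu_B) = n-1$ (the Busemann--Hausdorff measure is bi-Lipschitz to the hyperbolic volume, and $\Hb^n$ has volume growth entropy $n-1$). Thus the plan is to prove the converse: assuming $h_{vol}(\Omega, H_\Omega, \mu_B) = n-1$, deduce that $\Omega$ is projectively isomorphic to $\Bc$.

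My strategy is to reduce the statement to Theorem~\ref{thm:hilbert_finite_vol}, which gives the desired conclusion from the hypothesis $\delta_\Gamma(\Omega, H_\Omega) = n-1$ rather than $h_{vol}(\Omega, H_\Omega, \mu_B) = n-1$. The link between the two invariants is the result of Crampon and Marquis \cite[Th\'eor\`eme 9.2]{CM2014_geodesic_flow}: when $\Omega$ is strictly convex \emph{and} has $C^1$ boundary and $\Gamma \backslash \Omega$ has finite volume, then $h_{vol}(\Omega, H_\Omega, \mu_B) = \delta_\Gamma(\Omega, H_\Omega)$. Granting both regularity properties, $h_{vol} = n-1$ gives $\delta_\Gamma = n-1$, and Theorem~\ref{thm:hilbert_finite_vol} concludes. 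Since the hypothesis only provides strict convexity \emph{or} $C^1$ boundary, the heart of the matter is to show that, for a finite-covolume action, these two properties are in fact equivalent.

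I would establish this key equivalence by projective duality combined with the theory of geometric finiteness for convex projective structures. Recall the classical duality dictionary: $\partial \Omega$ is $C^1$ if and only if the dual convex set $\Omega^*$ is strictly convex, and $\Omega$ is strictly convex if and only if $\partial \Omega^*$ is $C^1$; moreover $\Gamma$ acts on $\Omega^*$ by the contragredient representation, again properly, freely, and (using that finite $\mu_B$-covolume is preserved under duality) with finite covolume. Consequently the assertion ``$\partial\Omega$ is $C^1$ $\Rightarrow$ $\Omega$ is strictly convex'', quantified over all such finite-covolume sets, is equivalent via duality to its own converse, so it suffices to prove this single implication. For that I would use that, since the action has finite covolume, the limit set is all of $\partial\Omega$ and every boundary point is either a conical limit point or a bounded parabolic fixed point (geometric finiteness, in the sense developed by Crampon and Marquis and by Cooper, Long and Tillmann). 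A maximal nontrivial segment in $\partial\Omega$ violating strict convexity would be forced, by the recurrence dynamics at a conical point or by the explicit cusp models at a parabolic point, to be incompatible with the $C^1$ hypothesis, giving the contradiction.

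The principal obstacle is exactly this last lemma. Controlling the shape of $\partial\Omega$ at the cusps of a finite-volume convex projective manifold is considerably more delicate than in the cocompact case settled by Benoist \cite{B2004}, both because one must verify that finite covolume passes to the dual and because the parabolic cusp geometry must be understood precisely enough to exclude the mixed regularity behaviour. Once the equivalence of strict convexity and $C^1$ boundary is in hand, the corollary follows formally: the hypothesis yields one of the two properties, the lemma supplies the other, Crampon--Marquis gives $h_{vol}(\Omega, H_\Omega, \mu_B) = \delta_\Gamma(\Omega, H_\Omega)$, and Theorem~\ref{thm:hilbert_finite_vol} identifies the equality case.
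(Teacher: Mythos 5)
Your outer reduction is exactly the paper's: Tholozan's bound gives the inequality, the ball direction is immediate, and the converse is obtained by combining the equivalence ``strictly convex $\Leftrightarrow$ $C^1$ boundary'' for finite-covolume quotients with \cite[Th\'eor\`eme 9.2]{CM2014_geodesic_flow} and Theorem~\ref{thm:hilbert_finite_vol}. Your duality reduction of that equivalence to a single implication, including the point that finite covolume must be shown to pass to the dual (the paper quotes \cite[Corollary 6.7]{CLT2015} for this), also matches the paper's Proposition~\ref{p:vice_versa}. The gap is in your proof of the single implication, which is the entire content of that proposition.

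Concretely: you invoke, for a finite-covolume action on a set that is only assumed strictly convex \emph{or} $C^1$, that the limit set is all of $\partial\Omega$ and that every boundary point is a conical limit point or a bounded parabolic fixed point. That statement is geometric finiteness of the action, and in the sources you appeal to (Crampon--Marquis, Cooper--Long--Tillmann) it is developed under the standing hypothesis that $\Omega$ is strictly convex \emph{with} $C^1$ boundary --- i.e., under both of the properties whose equivalence you are trying to establish. As written, your argument is circular: the dynamical dichotomy you want to use only becomes available once the lemma is known. The paper avoids this by using only tools valid under strict convexity alone: the Margulis lemma and thick-thin decomposition (valid for properly convex sets), compactness of the thick part of $\Gamma \backslash \Omega$ from finite volume (via Benz\'ecri's cocompactness theorem, which bounds below the volume of balls of radius $\epsilon_n/2$), the classification of thin components as Margulis tubes or $p$-star-shaped parabolic regions $\Omega_{\epsilon}(\Lambda)$ (\cite[Lemma 8.2]{CLT2015}, which requires strict convexity), and finiteness of the set of short closed geodesics; this yields topological tameness with parabolic holonomy on each end, at which point \cite[Theorem 0.15]{CLT2015} converts these data into $C^1$ regularity of $\partial\Omega$. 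Even setting the circularity aside, your final step --- that a segment in $\partial\Omega$ (or, in the dual direction, a non-$C^1$ point) is incompatible with conical recurrence or with the cusp models --- is asserted rather than proved, and it is exactly where the cusp geometry must be controlled; so the heart of the lemma remains open in your proposal.
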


\begin{remark}
This result was announced for surfaces by Crampon in \cite{CraThese}, but his proof was not complete in the finite volume case since some of the dynamical results used are only fully proved in the compact case.
\end{remark}

\subsection*{Acknowledgments}
We would like to thank the referees for their careful reading of our article and their suggestions for improving it. The first author would like to thank Fran\c{c}ois Ledrappier and Nicolas Tholozan for helpful discussions. The second author was supported by the ANR Facettes and the ANR Finsler. The third author was partially supported by NSF grant 1400919.

\section{Entropy rigidity for Riemannian metrics}

This section is entirely devoted to the proof of Theorem \ref{thm:riem_finite_volume}. It will follow from Proposition \ref{prop:special_busemann} and Proposition \ref{prop:final_step} below.

\subsection{The Busemann boundary}

In this subsection we describe the Busemann compactification of a non-compact complete Riemannian manifold $(X,g)$. 

Fix a point $o \in X$. As in~\cite{L2010, LW2010}, we will normalize our Busemann functions such that $\xi(o)=0$. Now, for each $y \in X$, define the Busemann function based at $y$ to be 
\begin{equation*}
b_y(x) := d(x,y)-d(y,o).
\end{equation*}
As each $b_y$ is $1$-Lipschitz, the embedding $y \rightarrow b_y \in C(X)$ is relatively compact when $C(X)$ is equipped with the topology of uniform convergence on compact subsets. The \emph{Busemann compactification} $\wh{X}$ of $X$ is then defined to be the closure of $X$ in $C(X)$. The \emph{Busemann boundary} of $X$ is the set $\partial \wh{X} = \wh{X} \setminus X$. We begin by recalling some features of this compactification.

\begin{theorem}
\label{thm:buse_bd_basic}
Let $(X,g)$ be a non-compact complete simply connected Riemannian manifold. Then
\begin{enumerate}
\item $X$ is open in $\wh{X}$, hence the Busemann boundary $\partial \wh{X}$ is compact.
\item The action of $\text{Isom}(X)$ on $X$ extends to an action on $\wh{X}$ by homeomorphisms and for $\gamma \in \text{Isom}(X)$ and $\xi \in \partial \wh{X}$ the action is given by 
\begin{equation*}
(\gamma \cdot \xi)(x) = \xi(\gamma^{-1}x)-\xi(\gamma^{-1}o).
\end{equation*}
\end{enumerate}
\end{theorem}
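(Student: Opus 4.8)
The plan is to establish the two claimed properties of the Busemann compactification $\wh{X}$ directly from the definitions, relying on elementary properties of $1$-Lipschitz functions and the topology of uniform convergence on compact sets. The key structural fact I would exploit throughout is that every element of $\wh{X}$ is a $1$-Lipschitz function $\xi$ on $X$ with $\xi(o)=0$, since this condition is closed under uniform convergence on compact sets and holds for each $b_y$.

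\textbf{Openness of $X$ in $\wh{X}$.} First I would show that for each $y \in X$, the Busemann function $b_y$ determines $y$ uniquely, so that the embedding $y \mapsto b_y$ is injective; indeed $b_y$ attains its minimum value $-d(y,o)$ exactly at $x=y$, and moreover $b_y$ is smooth near $y$ with $\abs{\nabla b_y}=1$ there. The plan is to separate interior points from boundary points using a local geometric feature that survives the limit only for genuine interior points. Concretely, an interior point $b_y$ satisfies $\min_{x} b_y(x) = b_y(y) = -d(y,o)$ and this minimum is attained; for a boundary point $\xi \in \partial\wh{X}$, obtained as a limit $b_{y_k}$ with $d(y_k,o)\to\infty$, I would argue that $\inf_x \xi(x) = -\infty$ (since $\xi(y) = \lim_k (d(y,y_k)-d(y_k,o))$ and one can push the base direction to infinity), so no boundary function attains a finite minimum over all of $X$. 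This dichotomy — interior Busemann functions have a global minimum, boundary ones do not — gives that $X$ is open, after checking that the map $y\mapsto b_y$ is a homeomorphism onto its image with the subspace topology (which follows from local smoothness and the fact that $b_{y_k}\to b_y$ uniformly on compacta forces $y_k\to y$). Compactness of $\partial\wh{X}$ is then immediate: $\wh{X}$ is compact by construction (closure of a relatively compact set), and a closed subset of a compact space is compact, so the complement of the open set $X$ is compact.

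\textbf{The isometric action.} For the second part I would first verify the formula on $X$ itself. For $\gamma \in \Isom(X)$ and $y \in X$, a direct computation gives
\begin{equation*}
b_{\gamma y}(x) = d(x,\gamma y) - d(\gamma y, o) = d(\gamma^{-1}x, y) - d(y, \gamma^{-1}o) = b_y(\gamma^{-1}x) - b_y(\gamma^{-1}o),
\end{equation*}
using that $\gamma$ is an isometry. This shows the prescribed formula $(\gamma\cdot\xi)(x) = \xi(\gamma^{-1}x) - \xi(\gamma^{-1}o)$ is exactly the action induced on the image $\{b_y\}$ by the action $y\mapsto\gamma y$ on $X$. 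I would then check that the right-hand side, viewed as a map $\wh{X}\to\wh{X}$, is continuous for the topology of uniform convergence on compacta: if $\xi_k\to\xi$ uniformly on compact sets, then so does $\xi_k(\gamma^{-1}\,\cdot\,)-\xi_k(\gamma^{-1}o)$, because $\gamma^{-1}$ maps compact sets to compact sets. Since this continuous extension agrees with the isometry action on the dense subset $X$ and $\wh{X}$ is the closure of $X$, the extension is the unique continuous extension, it preserves $\wh{X}$, and it restricts correctly to $\partial\wh{X}$; the group law and invertibility are inherited by continuity from $X$.

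\textbf{The main obstacle} I expect is the openness statement, specifically verifying cleanly that $X\to\wh{X}$ is a topological embedding rather than merely a continuous injection — one must rule out the pathology of a sequence $y_k$ escaping to infinity whose Busemann functions nonetheless converge to $b_y$ for an interior point $y$. The minimum-attainment criterion above is designed to preclude exactly this, but making it rigorous requires controlling $\xi(x)$ as $x$ ranges over all of $X$ (not just compacta), which is where I would need to invoke completeness and properness of $(X,g)$ to ensure that sequences realizing the escape to infinity genuinely drive $\inf_x\xi$ to $-\infty$. Everything else reduces to bookkeeping with $1$-Lipschitz functions and the functoriality of the uniform-on-compacta topology under the proper map $\gamma^{-1}$.
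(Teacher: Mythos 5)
Your part (2) is correct, and it is essentially what the paper has in mind when it calls that assertion straightforward: the identity $b_{\gamma y}(x)=b_y(\gamma^{-1}x)-b_y(\gamma^{-1}o)$, continuity on $C(X)$ of $f\mapsto f\circ\gamma^{-1}-f(\gamma^{-1}o)$, and the cocycle computation giving the group law are all that needs checking. The genuine gap is in part (1). Your dichotomy is true --- each $b_y$ attains the finite minimum $-d(y,o)$, and each $\xi\in\partial\wh{X}$ satisfies $\inf_X\xi=-\infty$ --- but the inference ``this dichotomy gives that $X$ is open'' is invalid as stated, and it is exactly where the content of the proposition lies. Openness of $X$ means no sequence $\eta_k\in\partial\wh{X}$ converges to some $b_y$; however, the property $\inf_X\eta_k=-\infty$ is \emph{not} closed under uniform convergence on compacta, so by itself it cannot forbid such convergence. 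For instance, on $X=\Rb$ with $o=0$, the $1$-Lipschitz functions $f_k(x)=\abs{x}$ for $\abs{x}\leq k$ and $f_k(x)=2k-\abs{x}$ for $\abs{x}>k$ vanish at $o$, are unbounded below, and converge uniformly on compacta to $b_0=\abs{\,\cdot\,}$. (These $f_k$ do not lie in $\wh{\Rb}$, but that is precisely the point: ruling out this behavior requires using more about boundary points than unboundedness below.) Moreover, your own diagnosis of the obstacle --- that one must ``control $\xi(x)$ as $x$ ranges over all of $X$, not just compacta'' --- points in the wrong direction: the topology of $\wh{X}$ gives no control whatsoever outside compact sets, so no correct argument can run through such control.

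The repair, which is what the proof of the result the paper cites (\cite[Proposition 1]{LW2010}) actually provides, is to make the unboundedness \emph{quantitative and localized at the basepoint}. If $\xi=\lim_k b_{y_k}$ with $d(y_k,o)\to\infty$, then for every $r>0$ and all large $k$, the point of a minimal geodesic from $o$ to $y_k$ (which exists by Hopf--Rinow) at distance $r$ from $o$ lies in $\overline{B_r(o)}$ and satisfies $b_{y_k}=-r$ there; by uniform convergence on the \emph{fixed} compact set $\overline{B_r(o)}$ this gives $\inf_{\overline{B_r(o)}}\xi\leq -r$ for every $r>0$. Unlike the global condition $\inf_X\xi=-\infty$, this localized condition is stable under uniform-on-compacta limits, and it separates boundary from interior with a definite gap on a fixed compact set: given $y\in X$, take $r=d(y,o)+1$; then any $\eta\in\partial\wh{X}$ has $\inf_{\overline{B_r(o)}}\eta\leq -d(y,o)-1$, while $b_y\geq -d(y,o)$ everywhere, so $\sup_{\overline{B_r(o)}}\abs{\eta-b_y}\geq 1$. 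Hence the neighborhood $\{f\in\wh{X}:\sup_{\overline{B_r(o)}}\abs{f-b_y}<1\}$ of $b_y$ is disjoint from $\partial\wh{X}$, which is openness; compactness of $\partial\wh{X}$ then follows as you say, as does the fact that $y\mapsto b_y$ is a homeomorphism onto its image. This localization is the crux of the proof, not bookkeeping. One further small slip: $b_y$ is not smooth at $x=y$ (no distance function is smooth at its center point), though nothing in the corrected argument needs that.
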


The first result can be found in~\cite[Proposition 1]{LW2010}. The second assertion is straightforward to prove. 

\subsection{Patterson-Sullivan measures}

\begin{definition} 
Let $(X,g)$ be a non-compact complete simply connected Riemannian manifold and $\Gamma \leq \Isom(X,g)$ a discrete subgroup with $\delta_{\Gamma} < \infty$. A family of measures $\{ \nu_x : x \in X\}$ on $\partial \wh{X}$ is a (normalized) \emph{Patterson-Sullivan measure} if
\begin{enumerate}
\item $\nu_o(\partial \wh{X})=1$,
\item for any $x,y \in X$ the measures $\nu_x,\nu_y$ are in the same measure class and satisfy 
\begin{equation*}
\frac{d\nu_x}{d\nu_y}(\xi) = e^{-\delta_\Gamma(\xi(x)-\xi(y))},
\end{equation*}
\item for any $g \in \Gamma$, $\nu_{gx} = g_*\nu_x$.
\end{enumerate}
\end{definition}

Following the standard construction of Patterson-Sullivan measures via the Poincar\'e series (see for instance Section 2 of~\cite{LW2010}) we obtain:

\begin{proposition}
Let $(X,g)$ be a non-compact complete simply connected Riemannian manifold and $\Gamma \leq \Isom(X,g)$ a discrete subgroup with $\delta_\Gamma < \infty$. Then there exists a Patterson-Sullivan measure $\{ \nu_x : x \in X\}$ on $\partial \wh{X}$.
\end{proposition}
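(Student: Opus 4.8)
The plan is to carry out Patterson's construction of conformal densities, transported to the Busemann boundary. Consider the Poincar\'e series
\[
P(s):=\sum_{\gamma\in\Gamma}e^{-s\,d(o,\gamma o)},
\]
which, by definition of $\delta_\Gamma$, converges for $s>\delta_\Gamma$ and diverges for $s<\delta_\Gamma$. For $s>\delta_\Gamma$ and $x\in X$ I form the finite atomic measures
\[
\nu_x^s:=\frac{1}{P(s)}\sum_{\gamma\in\Gamma}e^{-s\,d(x,\gamma o)}\,\delta_{\gamma o}
\]
on $\wh X$, so that $\nu_o^s$ is a probability measure. Since $\wh X$ is compact (Theorem~\ref{thm:buse_bd_basic}), the set of probability measures on it is weak-$*$ compact; I extract a sequence $s_k\downarrow\delta_\Gamma$ with $\nu_o^{s_k}\to\nu_o$ weak-$*$, and define each $\nu_x$ through the limiting densities computed below.

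The conformal relation (2) is immediate: at the atom $\gamma o$,
\[
\frac{d\nu_x^s}{d\nu_o^s}(\gamma o)=e^{-s\,(d(x,\gamma o)-d(o,\gamma o))}=e^{-s\,b_{\gamma o}(x)},
\]
and because $b_{\gamma o}\to\xi$ uniformly on compacta as $\gamma o\to\xi\in\partial\wh X$ (this is the defining convergence of the Busemann boundary), while $s_k\to\delta_\Gamma$, the densities converge uniformly on $\wh X$ to $\xi\mapsto e^{-\delta_\Gamma\xi(x)}$. Thus $d\nu_x/d\nu_o(\xi)=e^{-\delta_\Gamma\xi(x)}$, which gives (2) and, once (1) is known, shows each $\nu_x$ is well defined. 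For equivariance (3), reindex $\gamma\mapsto g\gamma$ and use that $g$ is an isometry to obtain $g_*\nu_x^s=\nu_{gx}^s$ exactly; since $\Gamma$ acts on $\wh X$ by homeomorphisms (Theorem~\ref{thm:buse_bd_basic}), pushforward commutes with the weak-$*$ limit, whence $g_*\nu_x=\nu_{gx}$.

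The one real obstacle is (1): one must ensure $\nu_o(\partial\wh X)=1$, i.e.\ that $\nu_o^{s_k}(K)\to0$ for every compact $K\subset X$. When $\Gamma$ is of \emph{divergent type} this is automatic, since divergence of $P(s)$ at $s=\delta_\Gamma$ forces the normalized mass on any finite set of orbit points to vanish as $s\downarrow\delta_\Gamma$. In the \emph{convergent type} case I would apply Patterson's trick: choose a non-decreasing, slowly varying weight $h\colon[0,\infty)\to(0,\infty)$ (so that $h(t+r)/h(t)\to1$ as $t\to\infty$, locally uniformly in $r$) for which $\sum_\gamma h(d(o,\gamma o))e^{-s\,d(o,\gamma o)}$ retains abscissa of convergence $\delta_\Gamma$ but diverges there, and insert the factor $h(d(o,\gamma o))$ into the definition of $\nu_x^s$. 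Divergence then gives (1), and the factor cancels in the ratio $d\nu_x^s/d\nu_o^s$, so (2) is unchanged; the only subtlety is that the reindexing for (3) now produces the factor $h(d(go,\gamma o))/h(d(o,\gamma o))$, which tends to $1$ because $\abs{d(go,\gamma o)-d(o,\gamma o)}\le d(o,go)$ is bounded and $h$ is slowly varying, so equivariance is recovered in the limit. Establishing the existence of such an $h$ and this last asymptotic cancellation is the technical heart of the argument.
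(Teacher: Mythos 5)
Your proposal is correct and follows essentially the same route as the paper: the paper's proof consists of invoking the standard Patterson--Sullivan construction via the Poincar\'e series (citing Section 2 of~\cite{LW2010}), which is precisely what you carry out --- atomic orbital measures, weak-$*$ limits on the compact Busemann compactification, and Patterson's slowly varying weight in the convergent-type case. Like the paper, you defer the existence of the weight $h$ to Patterson's classical lemma rather than proving it, which matches the level of detail the paper itself relies on.
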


\subsection{An integral formula}

Now suppose $(X,g)$ is a non-compact complete simply connected Riemannian manifold and $\Gamma \leq \Isom(X,g)$ is a discrete subgroup with $\delta_\Gamma < \infty$. Moreover, assume that $\Gamma$ acts properly and freely on $X$ and the quotient manifold $M = \Gamma \backslash X$ has finite volume (with respect to the Riemannian volume form). 

Following \cite{L2010,LW2010}, we introduce the \emph{laminated space}
\begin{equation*}
X_M = \Gamma \backslash (X \times \partial \wh{X}) 
\end{equation*}
where $\Gamma$ acts diagonally on the product. The space $X_M$ is laminated by the images of $X \times \{ \xi \}$ under the projection. The leaves of this lamination inherit a smooth structure from $X$ and using this structure we can define a gradient $\nabla^{\Wc}$, a divergence $\text{div}^{\Wc}$, and a Laplacian $\Delta^{\Wc}$ in the leaf direction. A Patterson-Sullivan measure $\{\nu_x : x \in X\}$ yields a measure on the laminated space $X_M$ as follows: by definition $d\nu_x(\xi)=e^{-\delta_\Gamma\xi(x)}d\nu_o(\xi)$ for all $x \in X$. In particular if $dx$ is the Riemannian volume form on $X$, then the measure
\begin{equation*}
d\wt{m}(x,\xi)=e^{-\delta_\Gamma\xi(x)}dxd\nu_o(\xi)
\end{equation*}
is $\Gamma$-invariant and descends to a measure $\nu$ on $X_M$.

The argument at the end of Section 2 of~\cite{LW2010} can be used to show the following: 

\begin{theorem}\label{thm:integral_formula}
With the notation above, if $Y$ is a continuous vector field on $X_M$ which is $C^1$ along the leaves $X \times \{\xi\}$ such that $\norm{Y}_g$ and $\Div^{\Wc} Y$ are in $L^1(X_M, d\nu)$ then
\begin{equation*}
\int \Div^{\Wc} Y d\nu = \delta_\Gamma\int \left\langle Y,\nabla^{\Wc} \xi  \right\rangle d\nu.
\end{equation*}
\end{theorem}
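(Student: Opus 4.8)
The plan is to deduce the formula from a leafwise weighted integration by parts, with the non-compactness of the leaves handled by an exhaustion argument that exploits the two $L^1$ hypotheses.

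First I would record the pointwise Leibniz identity along each leaf. Since $d\nu_x(\xi)=e^{-\delta_\Gamma\xi(x)}d\nu_o(\xi)$, the natural object is the weighted field $e^{-\delta_\Gamma\xi}Y$, for which
\[
\Div^{\Wc}\!\left(e^{-\delta_\Gamma\xi}Y\right) = e^{-\delta_\Gamma\xi}\Div^{\Wc}Y - \delta_\Gamma\, e^{-\delta_\Gamma\xi}\left\langle \nabla^{\Wc}\xi, Y\right\rangle .
\]
Writing the integral against $\nu$ as $\int_{X_M} f\,d\nu = \int_{\partial\wh{X}}\int_{\Fc} f\, e^{-\delta_\Gamma\xi(x)}\,dx\,d\nu_o(\xi)$ for a fundamental domain $\Fc\subset X$ of $\Gamma$, this identity reduces the theorem to the single claim that the total leafwise divergence of the $\Gamma$-invariant field $e^{-\delta_\Gamma\xi}Y$ integrates to zero, that is $\int_{\partial\wh{X}}\int_{\Fc}\Div^{\Wc}(e^{-\delta_\Gamma\xi}Y)\,dx\,d\nu_o(\xi)=0$.

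The subtlety is that one cannot simply invoke the divergence theorem on a single leaf: a leaf is a copy of the non-compact manifold $X$ and the weight $e^{-\delta_\Gamma\xi}$ is not integrable there. Instead I would first prove the identity for fields that are compactly supported in the $M$-direction, and then approximate. For such a field the support meets $\Fc$ in a compact set, so the divergence theorem on $\Fc$ produces only boundary contributions along the faces of $\partial\Fc$; these faces are identified in pairs by elements $\gamma\in\Gamma$, and the Patterson--Sullivan transformation rules $\nu_{\gamma x}=\gamma_*\nu_x$ and $d\nu_x/d\nu_o = e^{-\delta_\Gamma\xi(x)}$, together with the equivariance of $Y$, make the paired contributions cancel after integrating against $\nu_o$. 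This is exactly the computation carried out in the compact case in Section 2 of~\cite{LW2010}, and it yields the identity $\int_{X_M}\Div^{\Wc}(\chi Y)\,d\nu = \delta_\Gamma\int_{X_M}\chi\left\langle Y,\nabla^{\Wc}\xi\right\rangle d\nu$ for every cutoff $\chi$ compactly supported on $M$.

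Finally I would choose a sequence of cutoffs $\chi_k$ on the finite-volume manifold $M$, pulled back to $X_M$, with $0\le\chi_k\le1$, $\chi_k\to1$ pointwise, $\|\nabla^{\Wc}\chi_k\|_g$ uniformly bounded, and with $\supp\nabla^{\Wc}\chi_k$ escaping into the cusps; the bounded-curvature hypothesis guarantees that such smooth cutoffs exist. Expanding $\Div^{\Wc}(\chi_k Y)=\chi_k\Div^{\Wc}Y+\left\langle\nabla^{\Wc}\chi_k,Y\right\rangle$ in the compactly-supported identity and letting $k\to\infty$, the terms $\int\chi_k\Div^{\Wc}Y\,d\nu$ and $\delta_\Gamma\int\chi_k\left\langle Y,\nabla^{\Wc}\xi\right\rangle d\nu$ converge to the desired integrals by dominated convergence, using $\Div^{\Wc}Y\in L^1$ and $\|Y\|_g\in L^1$ (here $\|\nabla^{\Wc}\xi\|\le1$ since Busemann functions are $1$-Lipschitz). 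The remaining error satisfies $\left|\int\left\langle\nabla^{\Wc}\chi_k,Y\right\rangle d\nu\right|\le\|\nabla^{\Wc}\chi_k\|_\infty\int_{\supp\nabla^{\Wc}\chi_k}\|Y\|_g\,d\nu$, which tends to $0$ because the supports escape to infinity and $\|Y\|_g\in L^1(d\nu)$. I expect the main obstacle to be the construction of the cutoffs $\chi_k$ together with the verification that this error term genuinely vanishes: this is the one place where finite volume (rather than compactness) and the bounded-curvature assumption are essential, whereas the boundary-cancellation step is a routine equivariant bookkeeping inherited from the compact argument.
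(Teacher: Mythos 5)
Your proposal is correct and takes essentially the same route as the paper: the paper proves this theorem simply by invoking the argument at the end of Section 2 of \cite{LW2010} --- the weighted leafwise Stokes identity $\Div^{\Wc}(e^{-\delta_\Gamma\xi}Y)=e^{-\delta_\Gamma\xi}(\Div^{\Wc}Y-\delta_\Gamma\langle\nabla^{\Wc}\xi,Y\rangle)$ together with the $\Gamma$-equivariant cancellation you carry out for fields compactly supported in the $M$-direction --- and your exhaustion step, using the two $L^1$ hypotheses, is exactly the adaptation to the finite-volume setting that the paper leaves to the reader. The one inaccuracy is your closing attribution of where the hypotheses are essential: the cutoffs $\chi_k$ exist on any complete Riemannian manifold (the paper's Lemma~\ref{lem:nice_cut_off} uses no curvature assumption and even gives $\norm{\nabla\chi_k}\leq C/k$, so your error term vanishes without needing the supports to escape), and indeed the theorem is stated without any bounded-curvature hypothesis --- that assumption enters the paper only through the heat-kernel estimates of Proposition~\ref{prop:heat_kernel}, not here.
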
  

\begin{remark}
If $\xi \in \partial \wh{X}$, then $\norm{\nabla^{\Wc} \xi(x) } \leq 1$ for almost every $x \in X$. So we see that
\begin{equation*}
\int \abs{\left\langle Y,\nabla^{\Wc} \xi  \right\rangle} d\nu \leq \int \norm{Y}_g d\nu < \infty
\end{equation*}
and thus the right hand side of the equation in Theorem~\ref{thm:integral_formula} is well defined. 
\end{remark}

Now the function $x \rightarrow \nu_x(\partial \wh{X})$ is $\Gamma$-invariant so with a slight abuse of notation the measure $\nu$ has total mass
\begin{equation*}
\nu(X_M) = \int_M \nu_x(\partial \wh{X} ) dx.
\end{equation*}
Since $x \rightarrow \nu_x(\partial \wh{X})$ is continuous, if $M$ is compact then the measure $\nu$ is finite. For general finite volume quotients $\Gamma \backslash X$ it is not clear when $\nu$ will be a finite measure, but we can prove the following:

\begin{proposition}
With the notation above, if $(X,g)$ has $\Ric \geq -(n-1)$ and $\delta_{\Gamma} = n-1$ then $\nu(X_M) < \infty$.
\end{proposition}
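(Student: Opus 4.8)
The plan is to show that the $\Gamma$-invariant function
\[
\phi(x):=\nu_x(\partial\wh X)=\int_{\partial\wh X}e^{-\delta_\Gamma\xi(x)}\,d\nu_o(\xi)
\]
lies in $L^1(M)$; since $\nu(X_M)=\int_M\phi\,dx$ this is precisely the assertion. The function $\phi$ is positive and continuous, so the only difficulty is its growth in the ends of the finite-volume manifold $M$ (when $M$ is compact the integral is trivially finite, as already noted). The basic input from the finite-volume hypothesis is that $\Vol(B_R(o))\le\Vol(M)<\infty$ for every $R$, so Bishop--Gromov forces $\Vol(B_R(o))=o(e^{(n-1)R})$: the ends are volumetrically thin.

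First I would record two pointwise relations obtained by differentiating under the integral sign. Because every $\xi\in\partial\wh X$ is $1$-Lipschitz, $\norm{\nabla\xi}\le1$ almost everywhere, whence $\norm{\nabla\phi}\le\delta_\Gamma\phi$; integrating this log-gradient bound along geodesics yields the Harnack-type inequality $\phi(y)\le e^{\delta_\Gamma d(x,y)}\phi(x)$. Second, the Laplacian comparison theorem applied to the Busemann functions (valid since $\Ric\ge-(n-1)$) gives $\Delta\xi\le n-1$ in the barrier sense, so that
\[
\Delta\phi=\delta_\Gamma\int_{\partial\wh X}e^{-\delta_\Gamma\xi}\big(\delta_\Gamma\norm{\nabla\xi}^2-\Delta\xi\big)\,d\nu_o.
\]
With the critical hypothesis $\delta_\Gamma=n-1$, together with the fact that $\norm{\nabla\xi}=1$ almost everywhere for $\nu_o$-almost every $\xi$ (the boundary points carrying the Patterson--Sullivan measure being regular, a statement I would establish first and which is the analogue of the special Busemann functions used in the sequel), the integrand is nonnegative and $\phi$ is subharmonic.

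With $\phi$ subharmonic, finiteness of $\int_M\phi$ essentially forces $\phi$ to be harmonic: writing $g(R)=\int_{B_R(o)}\phi$ and combining the divergence theorem with $\norm{\nabla\phi}\le\delta_\Gamma\phi$ gives, for almost every $R$,
\[
0\le\int_{B_R(o)}\Delta\phi=\int_{\partial B_R(o)}\langle\nabla\phi,\hat n\rangle\le(n-1)\,g'(R),
\]
so that $\Delta\phi\not\equiv0$ would make $g'(R)$ bounded below by a positive constant and $g(R)\to\infty$. Hence I would aim to prove directly that $\phi$ is harmonic---equivalently that $\Delta^{\Wc}\xi=n-1$ for $\nu$-almost every leaf---via the maximal-entropy hypothesis and the equality case of the Laplacian comparison in the ends, rather than through the integral formula of Theorem~\ref{thm:integral_formula}, which presupposes the very finiteness we are trying to establish. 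Once harmonicity is in hand, the Harnack inequality together with $\Vol(B_R)=o(e^{(n-1)R})$ and a maximum principle on the (parabolic) ends should show that $\phi$ is bounded, whence $\int_M\phi\le(\sup\phi)\,\Vol(M)<\infty$.

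The main obstacle is exactly this control in the cusps. The naive Harnack bound only yields $\phi(x)\lesssim e^{(n-1)r}$ along a cusp, which against the cusp volume decay $e^{-(n-1)r}$ is precisely borderline and fails to be integrable; so one cannot avoid using that the maximal exponent $\delta_\Gamma=n-1$ saturates the comparison, upgrading subharmonicity to harmonicity and to slow growth. Two technical points must be handled carefully: the regularity statement $\norm{\nabla\xi}=1$ for $\nu_o$-almost every $\xi$, and the fact that the Busemann functions are only Lipschitz, so that the flux computation above must be carried out distributionally (or through a truncated version of the integral formula with boundary terms on $\partial B_R(o)$, letting $R\to\infty$).
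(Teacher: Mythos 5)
Your opening step---deducing that $\phi(x)=\nu_x(\partial\wh{X})$ satisfies a one-sided Laplacian inequality because the critical exponent $\delta_\Gamma=n-1$ matches the constant in the Laplacian comparison theorem---is the same as the paper's first move. But even there your route is shakier than necessary: to make your formula $\Delta\phi=\delta_\Gamma\int e^{-\delta_\Gamma\xi}\left(\delta_\Gamma\norm{\nabla\xi}^2-\Delta\xi\right)d\nu_o$ give a sign, you need $\norm{\nabla\xi}=1$ almost everywhere for $\nu_o$-almost every $\xi$, a regularity statement you defer and for which no proof is in sight (horofunctions in $\partial\wh{X}$ are merely Lipschitz limits, and unit gradient is not preserved under such limits). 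The paper avoids this entirely: it cites \cite[Proposition 4]{LW2010}, where $\Delta e^{-(n-1)\xi}\geq 0$ is obtained distributionally for \emph{every} $\xi\in\partial\wh{X}$ by passing to the limit from distance functions, whose error term $(n-1)\left(\coth d -1\right)$ vanishes locally uniformly. The genuine gap, however, is the center of your plan: you propose to prove that $\phi$ is \emph{harmonic} ``directly, via the maximal-entropy hypothesis and the equality case of the Laplacian comparison in the ends,'' and no argument is given. This is not a fillable detail. Harmonicity of $\phi$ amounts to $\Delta e^{-(n-1)\xi}=0$ for $\nu_o$-almost every $\xi$, which is precisely the conclusion of Proposition~\ref{prop:special_busemann}---the paper's central rigidity step---and that step is proved \emph{after} and \emph{using} the finiteness $\nu(X_M)<\infty$ (it requires Theorem~\ref{thm:integral_formula}, the heat-kernel regularization, the Fatou argument, and the bounded-curvature hypothesis). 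You correctly notice that invoking the integral formula would be circular, but the substitute you gesture at is empty: equality in the Laplacian comparison is a \emph{consequence} of rigidity, not of the hypothesis $\delta_\Gamma=n-1$, which enters the argument only through the exponent in $d\nu_x/d\nu_o$.

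The paper needs none of this because it closes the argument in one stroke: $\phi$ is positive, $\Gamma$-invariant, and satisfies $\Delta\phi\geq 0$ distributionally, and \cite[Proposition 0.2]{Ada1992}, a Liouville-type property of complete finite-volume Riemannian manifolds, forces $\phi$ to be \emph{constant}, whence $\nu(X_M)=\int_M \phi\, dx=\Vol(M)<\infty$. By contrast, the soft estimates you intend to fall back on (Harnack inequality, volume growth, a ``maximum principle on the ends'') provably cannot suffice: on a finite-volume cusped hyperbolic surface, the function equal to $\max(y,1)$ in a cusp (horocyclic coordinates) and to $1$ elsewhere is positive, subharmonic, satisfies $\norm{\nabla f}\leq f$ (hence your Harnack bound) on a manifold with $\Ric\geq -(n-1)$, yet it is non-constant and has infinite integral. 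So positivity, subharmonicity, the log-gradient bound, and finite volume together do not imply integrability---some genuinely global input is indispensable, and your own observation that the Harnack bound against cusp volume decay is ``precisely borderline'' concedes as much. (Note also that if you \emph{did} have harmonicity, your last step would be superfluous: a positive harmonic function on a parabolic manifold is constant outright, no boundedness argument needed; and in this generality the ends of $M$ need not resemble hyperbolic cusps, so the exponential-decay heuristic has no foundation.) In short, the missing idea is exactly the cited Liouville theorem; without it, and without the later rigidity machinery that is unavailable at this stage, your outline does not produce a proof.
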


\begin{proof}
Since $\Ric \geq -(n-1)$ the Laplacian comparison theorem implies for any $\xi \in \partial \wh{X}$ we have 
\begin{equation*}
\Delta e^{-(n-1) \xi} \geq 0
\end{equation*}
in the sense of distribution (see for instance~\cite[Proposition 4]{LW2010}). So in particular, since $\delta_{\Gamma} = n-1$, the function 
\begin{equation*}
f(x):= \nu_x(\partial \wh{X}) = \int_{\partial \wh{X}} e^{-(n-1)\xi(x)} d\nu_o(\xi)
\end{equation*}
is such that $\Delta f \geq 0$ in the sense of distributions. However, thanks to the invariance of the Patterson-Sullivan measure, $f$ is $\Gamma$-invariant and hence descends to a superharmonic function on $M = \Gamma \setminus X$. Since $M$ has finite volume and $f$ is a positive, superharmonic function, $f$ must be constant \cite[Proposition 0.2]{Ada1992}. Then,
\begin{equation*}
\nu(X_M) = \int_{M} \nu_x(\partial \wh{X}) dx =   \int_{M} \nu_o(\partial \wh{X}) dx =  \int_{M} dx = \Vol(M). \qedhere
\end{equation*}
\end{proof}

\subsection{A special Busemann function} This subsection is devoted to the proof of the following: 

\begin{proposition}\label{prop:special_busemann}
Suppose $(X,g)$ is a complete simply connected Riemannian manifold with $\Ric \geq -(n-1)$ and bounded sectional curvature. Assume $\Gamma \leq \Isom(X)$ is a discrete group that acts properly and freely on $X$ such that $M=\Gamma \backslash X$ has finite volume (with respect to the Riemannian volume form). If $\delta_{\Gamma} = n-1$ then there exists $\xi_0 \in \partial \wh{X}$ such that $\Delta \xi_0 \equiv n-1$.
\end{proposition}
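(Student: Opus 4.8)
The plan is to substitute the gradient of a Busemann function into the integral formula of Theorem~\ref{thm:integral_formula} and then use the Laplacian comparison inequality to upgrade an integral identity into a pointwise one. Since $\delta_\Gamma = n-1$, the previous proposition shows that $\nu$ is a \emph{finite} measure, $\nu(X_M) < \infty$, which makes all the integrals below absolutely convergent. Granting (see below) that $Y := \nabla^{\Wc}\xi$ is an admissible test field, Theorem~\ref{thm:integral_formula} gives
\[
\int \Delta^{\Wc}\xi \, d\nu = (n-1)\int \left\langle \nabla^{\Wc}\xi, \nabla^{\Wc}\xi\right\rangle d\nu = (n-1)\int \norm{\nabla^{\Wc}\xi}^2 d\nu.
\]
Because each horofunction satisfies the eikonal identity $\norm{\nabla^{\Wc}\xi} = 1$ almost everywhere along its leaf, the right-hand side equals $(n-1)\nu(X_M)$, so $\int \Delta^{\Wc}\xi \, d\nu = (n-1)\nu(X_M)$.

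Next I would invoke the Laplacian comparison theorem: under $\Ric \geq -(n-1)$ one has $\Delta^{\Wc}\xi \leq n-1$ along every leaf (the pointwise counterpart of the distributional inequality $\Delta e^{-(n-1)\xi}\geq 0$ already used in the previous proposition). Thus $(n-1) - \Delta^{\Wc}\xi$ is a nonnegative function whose $\nu$-integral vanishes, whence $\Delta^{\Wc}\xi = n-1$ for $\nu$-almost every $(x,\xi)$. Lifting to $X \times \partial\wh{X}$ and writing $d\wt{m}(x,\xi) = e^{-(n-1)\xi(x)}\,dx\,d\nu_o(\xi)$, Fubini's theorem together with the positivity of the density shows that for $\nu_o$-almost every $\xi$ one has $\Delta\xi(x) = n-1$ for Lebesgue-almost every $x \in X$. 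Since $\nu_o$ is a probability measure on $\partial\wh{X}$, at least one such $\xi_0$ exists. Finally, a standard elliptic regularity argument promotes $\Delta\xi_0 = n-1$ almost everywhere, together with the distributional bound $\Delta\xi_0 \leq n-1$, to $\Delta\xi_0 \equiv n-1$ everywhere.

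The technical heart, and the step I expect to be the main obstacle, is justifying that $Y = \nabla^{\Wc}\xi$ is admissible for Theorem~\ref{thm:integral_formula}: it must be continuous on $X_M$, of class $C^1$ along the leaves, with $\norm{Y}_g$ and $\Div^{\Wc}Y$ in $L^1(X_M, d\nu)$. This is precisely where the bounded curvature hypothesis is used. Under a two-sided sectional curvature bound the horofunctions $\xi \in \partial\wh{X}$ are uniformly $C^{1,1}$, so that $\nabla\xi$ is Lipschitz, $\norm{\nabla\xi} = 1$ almost everywhere, $\Delta\xi \in L^\infty$ with $\Delta\xi \leq n-1$, and the map $(x,\xi) \mapsto \nabla\xi(x)$ is continuous on $X \times \partial\wh{X}$ and hence descends to $X_M$. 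The integrability conditions then follow from $\nu(X_M) < \infty$, since $\norm{Y}_g \leq 1$ and $\Div^{\Wc}Y = \Delta^{\Wc}\xi$ is bounded.

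The remaining gap is that $C^{1,1}$ regularity is one derivative short of the $C^1$-gradient regularity literally demanded by the hypotheses of the integral formula. I expect to close this by mollifying $\xi$ in the leaf direction, applying Theorem~\ref{thm:integral_formula} to the smooth approximants, and passing to the limit using the uniform $C^{1,1}$ bounds and dominated convergence (the finiteness of $\nu$ being essential for the latter). Establishing the uniform regularity of horofunctions under the bounded curvature assumption, and their continuous dependence on $\xi$, is thus the delicate point; the integral computation itself is then routine.
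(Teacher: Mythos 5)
Your overall strategy --- feed the Busemann gradient into Theorem~\ref{thm:integral_formula}, use the finiteness of $\nu$ from the preceding proposition, and use the comparison bound $\Delta \xi \leq n-1$ to turn a vanishing integral of a nonnegative quantity into a $\nu$-a.e.\ identity --- is indeed the moral core of the paper's proof. But the step you yourself flag as the technical heart contains a genuine gap: it is not true (and the paper never claims) that bounded sectional curvature forces horofunctions to be $C^{1,1}$, or even $C^1$. That regularity is special to Hadamard manifolds, where nonpositive curvature and convexity kill the cut locus. Here $X$ is merely complete, simply connected, with bounded curvature and $\Ric \geq -(n-1)$; such a manifold can have regions of positive curvature, geodesics can develop conjugate points, and limits of the functions $b_y = d(\cdot,y)-d(y,o)$ can fail to be differentiable along ridge sets. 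Consequently $Y = \nabla^{\Wc}\xi$ need not be a continuous vector field, $\Delta^{\Wc}\xi$ need not exist pointwise, and $Y$ is simply not an admissible field for Theorem~\ref{thm:integral_formula}. In the paper the bounded-curvature hypothesis is used for something entirely different: the Cheng--Li--Yau heat kernel estimates (Lemma~\ref{lem:hk_est}), which make the smoothings $F_t(x,\xi) = \int_X p_t(x,y)\xi(y)\,dy$ well defined and yield $\norm{\nabla_x F_t} \leq e^{(n-1)t}$ and $\Delta_x F_t \leq n-1$ (Proposition~\ref{prop:heat_kernel}).

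Your proposed repair --- mollify, apply the integral formula to the approximants, and pass to the limit ``by dominated convergence using the uniform $C^{1,1}$ bounds'' --- is essentially the paper's starting point (with the heat semigroup as the mollifier), but the limit cannot be taken the way you describe, because the bounds you want to dominate with do not exist: one has only the one-sided bound $\Delta_x F_t \leq n-1$, no lower bound uniform in $t$ (such a bound would again amount to the missing $C^{1,1}$ regularity of $\xi$), and $\Delta_x F_t$ converges to $\Delta \xi$ only distributionally, not a.e. The paper's way around this is the real content of the proof: first establish $\Div^{\Wc} Y_t \in L^1(X_M,d\nu)$ together with a tail estimate uniform for $t \in [0,1]$ (Lemma~\ref{lem:L1}, via the cutoffs of Lemma~\ref{lem:nice_cut_off}); then localize with a partition of unity $\{\chi_j\}$ and integrate by parts (Stokes) so that all derivatives land on the smooth, compactly supported $\wt{\chi}_j$, at which point sending $t \to 0$ requires only the locally uniform convergence $F_t \to \xi$ and the uniform Lipschitz bound, not any convergence of Laplacians; and finally exploit the distributional inequality $\Delta e^{-(n-1)\xi} \geq 0$ to conclude that for $\nu_o$-a.e.\ $\xi$ the function $e^{-(n-1)\xi}$ is harmonic in the sense of distributions, which is what $\Delta \xi_0 \equiv n-1$ means here, smoothness being recovered afterwards by elliptic regularity. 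Without this localization-plus-integration-by-parts device, or a genuine substitute for the unavailable $C^{1,1}$ regularity of horofunctions, your argument does not go through.
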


For a general Riemannian manifold, the elements of $\partial \wh{X}$ are only Lipschitz. To overcome this lack of regularity we will consider smooth approximations obtained by convolution with the heat kernel. 

\begin{definition}\cite[Theorem 7.13]{G2009}
Suppose $(X,g)$ is a complete Riemannian manifold. The heat kernel $p_t(x,y) \in C^{\infty}(\Rb_{>0} \times X \times X)$ is the unique function satisfying:
\begin{enumerate}
\item $\frac{\partial}{\partial t} p_t = \Delta_x p_t = \Delta_y p_t$, 
\item $p_t(x,y) = p_t(y,x)$ for all $x,y  \in X$, 
\item $\lim_{t \searrow 0} p_t(x,y) = \delta_x(y)$ in the sense of distributions.
\end{enumerate}
\end{definition}

In the argument to follow it will also be helpful to use nicely behaved compactly supported functions:

\begin{lemma}\label{lem:nice_cut_off}
Suppose $M$ is a complete Riemannian manifold and $x_0 \in M$, then there exists $C>0$ such that for any $r>4$ there is a $C^\infty$ function $\varphi_r \colon M \rightarrow \Rb$ such that 
\begin{enumerate}
\item $0 \leq \varphi_r \leq 1$ on $M$, 
\item $\varphi_r \equiv 1$ on $B_r(x_0)$, 
\item $\varphi_r \equiv 0$ on $M \setminus B_{2r}(x_0)$, 
\item $\norm{\nabla \varphi_r} \leq C/r$ on $M$.
\end{enumerate}
\end{lemma}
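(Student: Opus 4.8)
The plan is to reduce the construction to smoothing the distance function and then composing with a one–variable cutoff. Set $\rho(x) := d(x,x_0)$, which is $1$–Lipschitz but in general only Lipschitz, due to the cut locus of $x_0$. The heart of the argument is to produce a genuinely smooth function $\psi \colon M \to \Rb$ with
\begin{equation*}
\abs{\psi - \rho} \leq 1 \quad\text{and}\quad \norm{\nabla \psi} \leq 2 \quad\text{on } M.
\end{equation*}
Granting such a $\psi$, I would fix a smooth function $\chi_r \colon \Rb \to [0,1]$ with $\chi_r \equiv 1$ on $(-\infty, r+1]$, $\chi_r \equiv 0$ on $[2r-1,+\infty)$, and $\abs{\chi_r'} \leq 2/(r-2)$. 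Since the transition interval $[r+1,2r-1]$ has length $r-2$, such a function exists, and for $r > 4$ one has $2/(r-2) \leq 4/r$. I would then set $\varphi_r := \chi_r \circ \psi$.

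Verifying the four properties is then routine. The function $\varphi_r$ is smooth (a composition of smooth maps) and takes values in $[0,1]$, giving (1). If $x \in B_r(x_0)$ then $\rho(x) < r$, so $\psi(x) < r+1$ and hence $\varphi_r(x) = 1$, giving (2); if $x \notin B_{2r}(x_0)$ then $\rho(x) \geq 2r$, so $\psi(x) \geq 2r-1$ and hence $\varphi_r(x) = 0$, giving (3) (and, since $\overline{B_{2r}(x_0)}$ is compact by Hopf--Rinow, $\varphi_r$ even has compact support). Finally, by the chain rule together with the two bounds on $\psi$,
\begin{equation*}
\norm{\nabla \varphi_r} = \abs{\chi_r'(\psi)}\,\norm{\nabla \psi} \leq \frac{2}{r-2}\cdot 2 \leq \frac{4}{r}\cdot 2 = \frac{8}{r},
\end{equation*}
so (4) holds with $C = 8$, independent of $r$.

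The only substantive point is the existence of $\psi$, and this is where I expect the real work to lie. On a manifold of bounded geometry one could simply convolve $\rho$ against a fixed–scale mollifier built from the exponential map, or apply the heat semigroup $P_t$ for small $t$ (using the Bochner estimate $\norm{\nabla P_t \rho} \le e^{Kt} P_t \norm{\nabla \rho}$ valid when $\Ric \ge -K$): at small scale the gradient bound degrades from $1$ only by a factor tending to $1$, while the sup–distance to $\rho$ tends to $0$. The difficulty in the stated generality is that $M$ is merely complete, so the injectivity radius may collapse and the curvature may be unbounded, and a fixed smoothing scale need not even produce a smooth function. I would therefore invoke the Greene--Wu smoothing theorem, which mollifies at a point–dependent scale $s(x) \to 0$ and produces, for any Lipschitz $f$ and any continuous positive error, a $C^\infty$ function lying within that error whose gradient at each point exceeds the local Lipschitz constant of $f$ by an arbitrarily small amount. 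Applied to $\rho$ (whose local Lipschitz constant is $\leq 1$) with error $\tfrac12$, this yields exactly the desired $\psi$. The one delicate point in their construction is that the variable scale $s(x)$ contributes extra terms involving $\nabla s$ to the gradient of the mollification; Greene--Wu's argument is arranged precisely so that these are absorbed into the prescribed error, and I would quote their estimate rather than reconstruct it.
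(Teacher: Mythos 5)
Your proof is correct and takes essentially the same route as the paper: both arguments hinge on the smooth approximation theorem for Lipschitz functions on complete Riemannian manifolds with control of the Lipschitz constant (the paper cites Azagra--Ferrera--L\'opez-Mesas--Rangel where you cite Greene--Wu, interchangeable for this purpose), followed by composition with a one-variable cutoff to restore the exact values $0$ and $1$. The only organizational difference is that the paper smooths the rescaled cutoff $f(d(\cdot,x_0)/r)$ anew for each $r$ and then fixes its values with a second cutoff $g$, whereas you smooth the distance function $d(\cdot,x_0)$ once (uniformly in $r$) and absorb the $r$-dependence into the outer cutoff $\chi_r$; both yield a constant $C$ independent of $r$ as required.
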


\begin{proof}
Pick a smooth function $f \colon [0,\infty) \rightarrow \Rb$ such that $0 \leq f \leq 1$, $f \equiv 1$ on $[0,1]$, and $f \equiv 0$ on $[2,\infty)$. Let $C_1 = \max \{ \abs{f^\prime(t)}\}$. Next, let $g \colon [-1/3,4/3] \rightarrow [0,1]$ be a $C^\infty$ function with $g \equiv 0$ on $[-1/3,1/3]$ and $g \equiv 1$ on $[2/3,4/3]$. Let $C_2 = \max\{ \abs{g^\prime(t)}\}$. We claim that $C=2C_1C_2$ satisfies the conclusion of the lemma. 

Fix $r > 0$ and define the function $\phi: M \rightarrow \Rb$ by
\begin{equation*}
\phi(x) = f(d(x,x_0)/r).
\end{equation*}
Then $\phi$ is $C_1/r$-Lipschitz. Then, we can approximate $\phi$ by a $C^\infty$ function, $\theta \colon X \rightarrow \Rb$, so that $\abs{\phi - \theta} < 1/r$ and $\theta$ is $2C_1/r$-Lipschitz (see, for instance, \cite{AFR2007}). Finally, define 
\begin{equation*}
\varphi_r(x) := g(\theta(x)).
\end{equation*}
Then $0 \leq \varphi_r \leq 1$ on $N$ by construction. Moreover, if $x \in B_r(x_0)$, we have that $\phi(x) =1$ and so, $\theta(x) \in [1-1/r,1+1/r] \subset [2/3,4/3]$. Thus, $\varphi_r(x) = 1$. Similarly, if $x \in M \setminus B_{2r}(x_0)$ then $\varphi_r(x) = 0$. Finally, we see that $\varphi_r$ is $2C_1C_2/r$-Lipschitz. 
\end{proof}

For the rest of the subsection assume $(X,g)$ and $\Gamma \leq \Isom(X,g)$ satisfy the hypothesis of Proposition \ref{prop:special_busemann}. Let $p_t(x,y)$ be the heat kernel on $X$. By Theorem 4 in~\cite{CLY1981}: for any $t > 0$ there exists $C_p=C_p(t) \geq 1$, such that \begin{equation*}
p_t(x,y) \leq C_p e^{\frac{-d(x,y)^2}{C_p}} \text{ for all } x, y \in X.
\end{equation*}
On the space $X \times \partial \wh{X}$ define the function 
\begin{equation*}
F_t(x,\xi) := \int_X p_t(x,y) \xi(y) dy.
\end{equation*}
Because of the above estimate on $p_t(x,y)$, $F_t$ is well defined. In Appendix~\ref{sec:heat_kernel} we will use standard facts about the heat kernel to prove the following:

\begin{proposition}\label{prop:heat_kernel} With the notation above, 
\begin{enumerate}
\item For any $t > 0$ and $\xi \in \partial \wh{X}$, the function $x \rightarrow F_t(x,\xi)$ is $C^\infty$. 
\item For any $t > 0$, the functions $(x, \xi) \rightarrow \nabla_x F_t(x,\xi)$ and $(x, \xi) \rightarrow \Delta_x F_t(x,\xi)$ are continuous.
\item For any $t > 0$ and $\xi \in \partial \wh{X}$,
\begin{equation*}
\norm{\nabla_x F_t(x,\xi)} \leq e^{(n-1)t}.
\end{equation*}
\item For any $t > 0$ and $\xi \in \partial \wh{X}$,
\begin{equation*}
\Delta_x F_t(x,\xi) \leq n-1.
\end{equation*}
\end{enumerate}
\end{proposition}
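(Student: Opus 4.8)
The plan is to recognize that $F_t(\cdot,\xi)$ is precisely the heat semigroup applied to the Busemann function $\xi$, i.e.\ $F_t(x,\xi) = (P_t\xi)(x)$ where $P_t$ denotes convolution against $p_t$, and then to transfer each of the four assertions to a statement about $P_t$ acting on a $1$-Lipschitz function of linear growth. The two ingredients that make everything converge are the Gaussian bound of~\cite{CLY1981}, namely $p_t(x,y)\leq C_p e^{-d(x,y)^2/C_p}$, together with the linear growth $\abs{\xi(y)}\leq d(y,o)$ coming from the normalization $\xi(o)=0$ and the fact that every $\xi\in\partial\wh{X}$ is $1$-Lipschitz. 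A standing fact I will use repeatedly is stochastic completeness: since $\Ric\geq-(n-1)$ is bounded below, $\int_X p_t(x,y)\,dy=1$ for all $x\in X$ and $t>0$.

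For part (1) I would differentiate under the integral sign. Bounded sectional curvature yields bounded geometry, hence (for $t$ in any compact subinterval of $(0,\infty)$) uniform Gaussian-type bounds on the spatial derivatives $\nabla_x^k p_t(x,y)$ of every order. Pairing these with the at-most-linear growth of $\xi$ produces, on compact sets in the $x$ variable, an integrable dominating function, so the integral may be differentiated arbitrarily often; this gives smoothness in $x$ and simultaneously the formulas $\nabla_x F_t(x,\xi)=\int_X \nabla_x p_t(x,y)\,\xi(y)\,dy$ and $\Delta_x F_t(x,\xi)=\int_X \Delta_x p_t(x,y)\,\xi(y)\,dy$. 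Part (2) then follows: continuity in $x$ is immediate from (1), while continuity in $\xi$ uses that $\partial\wh{X}$ carries the topology of uniform convergence on compact sets and that its elements form an equi-Lipschitz family. Splitting each integral over a large ball $B_R(o)$, where convergence is uniform, and its complement, where the Gaussian decay of $\nabla_x p_t$ and $\Delta_x p_t$ against the linear bound $2d(y,o)$ makes the tail uniformly small, gives joint continuity by a standard three-$\varepsilon$ argument.

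The substance is in (3) and (4). For the gradient bound (3) I would run the Bakry--\'Emery interpolation on $u(s,x):=(P_s\xi)(x)$, which solves $\partial_s u=\Delta u$. Setting $\Phi(s):=P_s\bigl(\norm{\nabla P_{t-s}\xi}^2\bigr)$ for $s\in[0,t]$, Bochner's formula together with $\Ric\geq-(n-1)$ gives $\Phi'(s)\geq -2(n-1)\Phi(s)$, and integrating from $0$ to $t$ yields $\norm{\nabla P_t\xi}^2\leq e^{2(n-1)t}\,P_t\bigl(\norm{\nabla\xi}^2\bigr)$. Since $\xi$ is $1$-Lipschitz, $\norm{\nabla\xi}\leq 1$ almost everywhere, so stochastic completeness gives $\norm{\nabla_x F_t(x,\xi)}\leq e^{(n-1)t}$. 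For the Laplacian bound (4) I would use the Laplacian comparison for Busemann functions under $\Ric\geq-(n-1)$, namely $\Delta\xi\leq n-1$ in the sense of distributions (the same input behind~\cite[Proposition 4]{LW2010}). Using $\Delta_x p_t=\Delta_y p_t$ and integrating by parts, $\Delta_x F_t(x,\xi)=\int_X p_t(x,y)\,d(\Delta\xi)(y)\leq (n-1)\int_X p_t(x,y)\,dy=n-1$, again by $p_t\geq 0$ and stochastic completeness.

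The main obstacle throughout is the interaction of two difficulties: $\xi$ is only Lipschitz, so $\Delta\xi$ is merely a distribution and the Bochner computation must be applied to a non-smooth, unbounded function; and $X$ is noncompact, so every integration by parts generates boundary terms at infinity. I would control both simultaneously by inserting the cut-off functions $\varphi_r$ of Lemma~\ref{lem:nice_cut_off}: integrate by parts against $\varphi_r\,p_t(x,\cdot)$ and let $r\to\infty$. The error terms carry a factor of $\nabla\varphi_r$, which is supported in the annulus $B_{2r}(x_0)\setminus B_r(x_0)$ and bounded by $C/r$; paired against the Gaussian decay of $p_t$ and $\nabla_x p_t$ they vanish in the limit, while the main term converges to $\Delta_x F_t(x,\xi)$ and the right-hand side to $n-1$. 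This cut-off-plus-Gaussian-decay mechanism, together with a smooth-approximation of $\xi$ to legitimize the Bochner step, is the only genuinely delicate point, and the rest is routine once $F_t(\cdot,\xi)=P_t\xi$ is in hand.
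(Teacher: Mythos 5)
Your parts (3) and (4) are essentially the paper's own proof. For (3), the paper cites the Bakry--\'Emery estimate $\norm{\nabla P_t(f)}_\infty \leq e^{(n-1)t}\norm{\nabla f}_\infty$ for $f \in C_c^\infty(X)$ and then approximates $\xi$ locally uniformly by smooth compactly supported $f_m$ with $\norm{\nabla f_m}_\infty \to 1$; your interpolation $\Phi(s) = P_s\bigl(\norm{\nabla P_{t-s}\xi}^2\bigr)$ is the mechanism behind that citation, and since you concede you must smooth $\xi$ anyway to legitimize the Bochner step (the endpoint $s \to t$ is exactly where Lipschitz-only regularity hurts), your argument collapses to theirs. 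For (4), the paper likewise combines the distributional Laplacian comparison $\Delta \xi \leq n-1$, the symmetry $\Delta_x p_t = \Delta_y p_t$, the cut-offs $\varphi_r$ of Lemma~\ref{lem:nice_cut_off}, and the Gaussian decay of $p_t$ and $\nabla p_t$; the only cosmetic difference is that the paper keeps everything weak (testing against a non-negative $\phi \in C_c^\infty(X)$ and concluding $\Delta_x F_t \leq n-1$ distributionally, hence pointwise once smoothness is known), whereas you write the pointwise pairing $\int_X p_t(x,y)\, d(\Delta\xi)(y)$, which is the same computation after your cut-off limit is taken.

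The genuine gap is in your part (1). You justify differentiation under the integral sign by claiming that bounded sectional curvature ``yields bounded geometry, hence uniform Gaussian-type bounds on the spatial derivatives $\nabla_x^k p_t(x,y)$ of every order.'' That implication is false: a bound on sectional curvature gives neither a lower bound on the injectivity radius nor any control on the derivatives of the curvature tensor, and uniform Gaussian estimates on higher-order spatial derivatives of the heat kernel require precisely such control --- this is why the paper's Lemma~\ref{lem:hk_est} (from \cite{CLY1981}) only provides bounds on $p_t$ and $\nabla_x p_t$, and nothing of higher order. As stated, your dominating functions for $\nabla_x^k p_t$, $k \geq 2$, do not exist under the standing hypotheses. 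The step is repairable: for $x$ ranging in a fixed compact set $K$, interior parabolic estimates (whose constants may depend on $K$, where the smooth metric is automatically controlled) bound $\sup_{x \in K}\abs{\nabla_x^k p_t(x,y)}$ by $\sup$ of $p_s(\cdot,y)$ over a slightly larger compact set and time interval, and the Gaussian bound on $p_t$ itself then gives the locally-uniform-in-$x$, Gaussian-in-$y$ decay needed to differentiate under the integral. Alternatively --- and this is what the paper does --- one avoids higher derivative bounds entirely: using only Lemma~\ref{lem:hk_est} and Fubini, one checks that $(\partial_t - \Delta_x)F_t(\cdot,\xi) = 0$ in the sense of distributions, and then parabolic hypoellipticity/regularity delivers both the smoothness in (1) and the joint continuity in (2) in one stroke. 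Your three-$\epsilon$ argument for (2) inherits the same defect, since it relies on Gaussian decay of $\Delta_x p_t$ uniformly in $x$, which again must be obtained by localization rather than by appeal to bounded geometry.
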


Now, let $\wt{Y}_t(x,\xi) = \nabla_x F_t(x,\xi)$. Then $\wt{Y}_t$ descends to a continuous vector field $Y_t$ on $X_M$ which is $C^\infty$ along the leaves $X \times \{\xi\}$.

Next, let $\varphi_r \colon M \rightarrow \Rb$ be as in Lemma~\ref{lem:nice_cut_off} for some $x_0 \in M$. Then, define $\wt{f}_r \colon X \times \partial \wh{X} \rightarrow \Rb$ by $\wt{f}_r(x,\xi) = \varphi_r(\pi' (x))$, where $\pi':X \to M$ is the universal cover map. Since $\wt{f}_r$ is $\Gamma$-invariant, it descends to a continuous function $f_r \colon X_M \rightarrow \Rb$ which is $C^\infty$ along the leaves $X \times \{\xi\}$. 

Let $\wt{x}_0 \in X$ be a preimage of $x_0 \in M$. For $r > 0$, let $K_r \subset X_M$ be the image of $B_r(\wt{x}_0) \times \partial \wh{X}$ under the map 
\begin{equation*}
\pi\colon X \times \partial \wh{X} \rightarrow X_M.
\end{equation*}

\begin{lemma}
$K_r$ is compact, $f_r \equiv 1$ on $K_r$, and $f_r \equiv 0$ on $X_M \setminus K_{2r}$. 
\end{lemma}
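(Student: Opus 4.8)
The plan is to read off all three claims directly from the definitions of $K_r$, $f_r$, and $\varphi_r$, using two standard facts: that the covering projection $\pi'\colon X \to M$ is a local isometry whose fibers are exactly the $\Gamma$-orbits (so $\pi'$ does not increase distances for the induced quotient distance, and $\Gamma$ acts transitively on each fiber), and that $\partial\wh{X}$ is compact by Theorem~\ref{thm:buse_bd_basic}(1). For compactness of $K_r$, recall that $X$ is complete, so by Hopf--Rinow the closed ball $\overline{B_r(\wt{x}_0)}$ is compact; since $\partial\wh{X}$ is compact as well, the product $\overline{B_r(\wt{x}_0)} \times \partial\wh{X}$ is compact, and $K_r$ is its image under the continuous map $\pi$, hence compact.

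Next I would check $f_r \equiv 1$ on $K_r$. A point of $K_r$ has the form $\pi(x,\xi)$ with $d(x,\wt{x}_0) \leq r$. Since $f_r$ is the descent of the $\Gamma$-invariant function $\wt{f}_r(x,\xi) = \varphi_r(\pi'(x))$, its value there is $\varphi_r(\pi'(x))$, independent of the chosen representative $(x,\xi)$. Because $\pi'(\wt{x}_0) = x_0$ and $\pi'$ does not increase distances, $d_M(\pi'(x), x_0) \leq d(x,\wt{x}_0) \leq r$, so $\pi'(x) \in \overline{B_r(x_0)}$. The construction in Lemma~\ref{lem:nice_cut_off} in fact gives $\varphi_r \equiv 1$ on the whole closed ball $\overline{B_r(x_0)}$: for $r > 4$ the approximation error $1/r$ keeps $\theta$ inside $[2/3,4/3]$, the region where $g \equiv 1$. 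Hence $f_r(\pi(x,\xi)) = 1$.

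The main work is the third assertion, $f_r \equiv 0$ on $X_M \setminus K_{2r}$, which I would prove in \emph{contrapositive} form: if $f_r(p) \neq 0$ then $p \in K_{2r}$. Write $p = \pi(x,\xi)$. Then $\varphi_r(\pi'(x)) = f_r(p) \neq 0$, so $\pi'(x) \in B_{2r}(x_0)$ by the vanishing of $\varphi_r$ off $B_{2r}(x_0)$, and there is a path in $M$ from $\pi'(x)$ to $x_0$ of length $< 2r$. Lifting this path through the covering $\pi'$ starting at $x$ produces a path in $X$ of the same length ending at a preimage of $x_0$; by transitivity of the $\Gamma$-action on the fiber over $x_0$ this endpoint is $\gamma \cdot \wt{x}_0$ for some $\gamma \in \Gamma$. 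Hence $d(x, \gamma \wt{x}_0) < 2r$, i.e.\ $d(\gamma^{-1} x, \wt{x}_0) < 2r$, so $\gamma^{-1} x \in \overline{B_{2r}(\wt{x}_0)}$, while $\gamma^{-1}\xi \in \partial\wh{X}$ since $\Gamma$ acts on the boundary. Therefore $p = \pi(x,\xi) = \pi(\gamma^{-1}x, \gamma^{-1}\xi) \in K_{2r}$, as desired.

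The one point needing care is exactly this lifting-and-translating step: a representative $(x,\xi)$ with $x$ far from $\wt{x}_0$ may still represent a point of $K_{2r}$, and one must exhibit the correct $\gamma \in \Gamma$ carrying it into $\overline{B_{2r}(\wt{x}_0)} \times \partial\wh{X}$. This is where the normality of the covering $X \to M$, equivalently transitivity of $\Gamma$ on fibers, is essential; the remaining bookkeeping (the behaviour of $\varphi_r$ on the closed balls of radii $r$ and $2r$, and the fact that $\pi'$ is $1$-Lipschitz) is routine.
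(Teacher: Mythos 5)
Your proof is correct and takes essentially the same approach as the paper's: the paper simply asserts compactness ``by definition'' and notes that $(x,\xi) \in \pi^{-1}(K_r)$ if and only if $x \in \bigcup_{\gamma \in \Gamma} B_r(\gamma \wt{x}_0)$, which is exactly the identification your path-lifting/fiber-transitivity step establishes in detail. If anything, your use of closed balls via Hopf--Rinow repairs the paper's small imprecision in claiming that the image of an \emph{open} ball times $\partial \wh{X}$ is compact.
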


\begin{proof}
Clearly, $K_r$ is compact by definition. Notice that $(x,\xi) \in \pi^{-1}(K_r)$ if and only if $x \in \cup_{\gamma \in \Gamma} B_r(\gamma \wt{x}_0)$. Thus, if $(x,\xi) \in \pi^{-1}(K_r)$ then $\wt{f}_r(x, \xi) \equiv 1$, and, if $(x,\xi) \notin \pi^{-1}(K_{2r})$ then $\wt{f}_r \equiv 0$. 
\end{proof}

\begin{lemma}
For any $r > 0$ and $t > 0$, 
\begin{equation*}
\norm{f_r Y_t} \in L^1(X_M, d\nu)
\end{equation*}
and 
\begin{equation*}
\Div^{\Wc} (f_r Y_t) \in L^1(X_M, d\nu).
\end{equation*}
\end{lemma}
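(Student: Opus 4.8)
The plan is to reduce both assertions to the elementary fact that a continuous function supported on a compact set is integrable against a locally finite measure, since the cut-off $f_r$ forces $f_r Y_t$ to live on the compact set $K_{2r}$.

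First I would record that $f_r Y_t$ has support contained in $K_{2r}$, which is immediate from $f_r \equiv 0$ on $X_M \setminus K_{2r}$ together with $0 \le f_r \le 1$. Next I would note that $f_r Y_t$ is continuous on $X_M$: the field $Y_t$ is continuous because $\wt{Y}_t(x,\xi)=\nabla_x F_t(x,\xi)$ is continuous by Proposition~\ref{prop:heat_kernel}(2), and $f_r$ is continuous by construction. Continuity plus compact support yields boundedness; alternatively $\norm{Y_t}\le e^{(n-1)t}$ by Proposition~\ref{prop:heat_kernel}(3) gives the pointwise bound $\norm{f_r Y_t}\le e^{(n-1)t}$ directly. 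Hence $\norm{f_r Y_t}\in L^1(X_M,d\nu)$ as soon as the relevant mass is finite.

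For the divergence I would expand using the leafwise Leibniz rule,
\[
\Div^{\Wc}(f_r Y_t) = f_r\,\Div^{\Wc} Y_t + \ip{\nabla^{\Wc} f_r, Y_t},
\]
which is legitimate because both $f_r$ and $Y_t$ are $C^\infty$ along the leaves $X\times\{\xi\}$. I would then check continuity of each factor: on a leaf $\Div^{\Wc}Y_t$ is $x\mapsto\Delta_x F_t(x,\xi)$, continuous by Proposition~\ref{prop:heat_kernel}(2), while $f_r$ and $\nabla^{\Wc} f_r$ are continuous since $f_r=\varphi_r\circ\pi'$ with $\varphi_r$ smooth of gradient bounded by $C/r$ (Lemma~\ref{lem:nice_cut_off}). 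Thus $\Div^{\Wc}(f_r Y_t)$ is continuous on $X_M$, and since both $f_r$ and $\nabla^{\Wc} f_r$ vanish off $K_{2r}$, it too is supported in $K_{2r}$.

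The remaining point is finiteness of mass: because $K_{2r}$ is compact and $\nu$ is locally finite (indeed $\nu(X_M)=\Vol(M)<\infty$ by the preceding proposition), we get $\nu(K_{2r})<\infty$, so a bounded function supported in $K_{2r}$ lies in $L^1(X_M,d\nu)$; applying this to $\norm{f_r Y_t}$ and to $\Div^{\Wc}(f_r Y_t)$ finishes the proof. I do not expect any genuine obstacle here — the statement is essentially bookkeeping — but the one place needing care is justifying the Leibniz expansion and the continuity of $\Div^{\Wc} Y_t$, both of which rest on the leafwise smoothness of $f_r$ and $Y_t$ and on Proposition~\ref{prop:heat_kernel}.
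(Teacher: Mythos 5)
Your proposal is correct and follows essentially the same route as the paper: the pointwise bound $\norm{f_r Y_t}\le e^{(n-1)t}$ together with $\nu(X_M)<\infty$ gives the first assertion, and the leafwise Leibniz expansion $\Div^{\Wc}(f_r Y_t)=f_r\Div^{\Wc}Y_t+\ip{\nabla^{\Wc}f_r,Y_t}$, combined with the compact support of $f_r$, the continuity of $(x,\xi)\mapsto\Delta_x F_t(x,\xi)$ from Proposition~\ref{prop:heat_kernel}, and the gradient bound on $\varphi_r$, gives the second. The only cosmetic difference is that you bound both terms using compact support in $K_{2r}$, while the paper bounds the cross term globally by $Ce^{(n-1)t}/r$ times $\nu(X_M)$; the underlying ingredients are identical.
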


\begin{proof}
Since $\norm{f_r Y_t}  \leq e^{(n-1)t}$, the first assertion is obvious. Now 
\begin{equation*}
\Div^{\Wc} (f_r Y_t )= f_r \Div^{\Wc} Y_t + \ip{\nabla^{\Wc} f_r,Y_t},
\end{equation*}
so 
\begin{equation*}
\int_{X_M} \abs{ \Div^{\Wc} f_r Y_t} d\nu \leq \int_{X_M} f_r \abs{ \Div^{\Wc} Y_t} d\nu + \frac{Ce^{(n-1)t}}{r} \nu(X_M).
\end{equation*}
However, the support of $f_r$ is compact in $X_M$ and the map $(x,\xi) \rightarrow \Delta_x F(x,\xi)$ is continuous. Thus, $ \abs{ \Div^{\Wc} Y_t} $ is bounded on the support of $f_r$. Hence,
\begin{equation*}
 \int_{X_M} f_r \abs{ \Div^{\Wc} Y_t} d\nu < +\infty. \qedhere
 \end{equation*}
\end{proof}

\begin{lemma}\label{lem:L1}
For any $t > 0$,
\begin{equation*}
\Div^{\Wc} Y_t \in L^1(X_M, d\nu),
\end{equation*}
and 
\begin{equation*}
\int_{X_M \setminus K_{2r}} \abs{ \Div^{\Wc} Y_t} d\nu \leq \left( 2n-2+\frac{C}{r} \right) e^{(n-1)t} \nu(X_M \setminus K_r).
\end{equation*}
\end{lemma}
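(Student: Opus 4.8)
The plan is to identify $\Div^{\Wc} Y_t$ with the leafwise Laplacian $\Delta_x F_t(x,\xi)$, use the one-sided pointwise bound $\Div^{\Wc} Y_t \le n-1$ from Proposition~\ref{prop:heat_kernel}(4) to control the positive part for free, and then recover control of the (pointwise uncontrolled) negative part from the integral identity of Theorem~\ref{thm:integral_formula} applied to truncations of $Y_t$. The pointwise reduction is this: since $\Div^{\Wc} Y_t \le n-1$, the positive part is bounded by $n-1$ and hence lies in $L^1$ because $\nu(X_M)<\infty$ (by the preceding proposition); and the elementary inequality $(\Div^{\Wc} Y_t)^- \le (n-1)-\Div^{\Wc} Y_t$, valid precisely because $\Div^{\Wc} Y_t \le n-1$, reduces everything to estimating the nonnegative quantity $(n-1)-\Div^{\Wc} Y_t$ against the cutoffs $f_r$.

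For the global $L^1$ statement I would apply Theorem~\ref{thm:integral_formula} to the admissible field $f_r Y_t$ (admissible by the previous lemma), expand via the product rule $\Div^{\Wc}(f_r Y_t) = f_r \Div^{\Wc} Y_t + \ip{\nabla^{\Wc} f_r, Y_t}$, and rearrange to obtain
\[
\int_{X_M} f_r\big[(n-1) - \Div^{\Wc} Y_t\big]\, d\nu = (n-1)\int_{X_M} f_r\big[1 - \ip{Y_t, \nabla^{\Wc}\xi}\big]\, d\nu + \int_{X_M} \ip{\nabla^{\Wc} f_r, Y_t}\, d\nu.
\]
Feeding in $\norm{Y_t} \le e^{(n-1)t}$ (Proposition~\ref{prop:heat_kernel}(3)) and $\norm{\nabla^{\Wc}\xi} \le 1$ gives $1 - \ip{Y_t,\nabla^{\Wc}\xi} \le 1 + e^{(n-1)t} \le 2e^{(n-1)t}$, while $\norm{\nabla^{\Wc} f_r}\le C/r$ from Lemma~\ref{lem:nice_cut_off} controls the last term; with $f_r \le 1$ the right-hand side is then bounded by $(2n-2 + C/r)e^{(n-1)t}\,\nu(X_M)$ uniformly in $r$. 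Since $f_r \nearrow 1$ and the integrand $(n-1)-\Div^{\Wc} Y_t \ge 0$, monotone convergence yields $\int (n-1)-\Div^{\Wc} Y_t\, d\nu < \infty$, hence the negative part is integrable and $\Div^{\Wc} Y_t \in L^1(X_M,d\nu)$.

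For the tail estimate I would run exactly the same computation with $1-f_r$ in place of $f_r$; this is now legitimate because $Y_t$ is itself admissible, and the key geometric features are that $1-f_r \equiv 1$ on $X_M\setminus K_{2r}$, $1-f_r\equiv 0$ on $K_r$, and $\nabla^{\Wc} f_r$ is supported in the annulus $K_{2r}\setminus K_r \subset X_M\setminus K_r$. Thus $\int(1-f_r)\,d\nu \le \nu(X_M\setminus K_r)$ and the gradient term is confined to $X_M\setminus K_r$. Combining the resulting bound on $\int_{X_M\setminus K_{2r}}(\Div^{\Wc} Y_t)^-\,d\nu \le \int_{X_M}(1-f_r)\big[(n-1)-\Div^{\Wc} Y_t\big]\,d\nu$ with the trivial estimate $\int_{X_M\setminus K_{2r}}(\Div^{\Wc} Y_t)^+\,d\nu \le (n-1)\nu(X_M\setminus K_r)$, and using $e^{(n-1)t}\ge 1$ to merge the terms, produces the claimed inequality over $X_M\setminus K_{2r}$.

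The hard part is the non-compactness: $\Div^{\Wc} Y_t$ has no pointwise lower bound, so its negative part is invisible locally and can only be extracted from the global identity of Theorem~\ref{thm:integral_formula}. That identity is available only for fields already known to be $L^1$, which is why one must first route the argument through the compactly supported truncations $f_r Y_t$, absorb the truncation error through $\norm{\nabla^{\Wc} f_r}\le C/r$, and crucially invoke the finiteness of $\nu$ proved in the previous proposition. The remaining delicate point is the bookkeeping that assembles the positive-part contribution, the $1-\ip{Y_t,\nabla^{\Wc}\xi}$ term, and the $O(1/r)$ cutoff error into exactly the constant $(2n-2+C/r)e^{(n-1)t}$ multiplying $\nu(X_M\setminus K_r)$, rather than something slightly larger.
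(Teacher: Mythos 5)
Your proposal is correct and follows essentially the same route as the paper: the positive part of $\Div^{\Wc} Y_t$ is handled by the pointwise bound $\Div^{\Wc} Y_t \leq n-1$ from Proposition~\ref{prop:heat_kernel} together with $\nu(X_M)<\infty$, and the uncontrolled negative part is extracted from Theorem~\ref{thm:integral_formula} applied to the truncations $f_r Y_t$ (and then to $(1-f_r)Y_t$ for the tail) via the product rule, the bounds $\norm{Y_t}\leq e^{(n-1)t}$, $\norm{\nabla^{\Wc}\xi}\leq 1$, $\norm{\nabla^{\Wc} f_r}\leq C/r$, and a Fatou-type limit, with your bookkeeping in terms of the nonnegative quantity $(n-1)-\Div^{\Wc} Y_t$ being algebraically equivalent to the paper's splitting via $\abs{s}=-s+2s^+$. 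The one discrepancy is that your final merge using $e^{(n-1)t}\geq 1$ honestly yields the constant $\left(3n-3+\frac{C}{r}\right)e^{(n-1)t}$ rather than the stated $\left(2n-2+\frac{C}{r}\right)e^{(n-1)t}$, but the paper's own combination of its two displayed estimates has exactly the same harmless slip (it needs $e^{(n-1)t}\geq 2$), and only the vanishing of the tail as $r\rightarrow\infty$, uniformly for $t\in[0,1]$, is used in the proof of Proposition~\ref{prop:special_busemann}.
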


\begin{proof}
For a real number $t$, let $t^+ = \max\{ 0, t\}$ and $t^- = \min\{ 0, t\}$. Then,
\begin{equation*}
\int_{X_M} \abs{\Div^{\Wc} (Y_t)} d \nu = \int_{X_M} \Div^{\Wc} (Y_t)^+ d \nu - \int_{X_M} \Div^{\Wc} (Y_t)^- d \nu
\end{equation*}
and, by Proposition~\ref{prop:heat_kernel},
\begin{equation*}
\int_{X_M} \Div^{\Wc} (Y_t)^+ d \nu \leq (n-1) \nu(X_M).
\end{equation*}
So, it is enough to bound the integral of $ \Div^{\Wc} (Y_t)^-$. 

By Theorem~\ref{thm:integral_formula},
\begin{equation*}
\int_{X_M} \Div^{\Wc} (f_r Y_t) d \nu =(n-1) \int_{X_M} \ip{ f_r Y_t, \nabla^{\Wc} \xi} d\nu.
\end{equation*}
So, by Proposition~\ref{prop:heat_kernel} (3) and the fact that $\norm{\nabla^{\Wc} \xi(x) } \leq 1$ for almost every $x \in X$,
\begin{equation*}
\abs{\int_{X_M} \Div^{\Wc} (f_r Y_t) d \nu} \leq (n-1) e^{(n-1)t} \nu(X_M). 
\end{equation*}
Now, 
\begin{equation*}
\Div^{\Wc} f_r Y_t = f_r \Div^{\Wc} Y_t + \ip{\nabla^{\Wc} f_r, Y_t} \quad \text{and} \quad \abs{ \ip{\nabla^{\Wc} f_r ,Y_t}} \leq \frac{C e^{(n-1)t}}{r},
\end{equation*}
so 
\begin{equation*}
\abs{\int_{X_M} f_r \Div^{\Wc} (Y_t) d \nu} \leq \left( \frac{C}{r} + n-1 \right)e^{(n-1)t}\nu(X_M). 
\end{equation*}
Then 
\begin{align*}
- \int_{X_M} f_r \Div^{\Wc}(Y_t)^- d\nu 
&= - \int_{X_M} f_r \Div^{\Wc}(Y_t) d\nu + \int_{X_M} f_r \Div^{\Wc}(Y_t)^+ d\nu \\
& \leq - \int_{X_M} f_r \Div^{\Wc}(Y_t) d\nu + (n-1) \nu(X_M).
\end{align*}
Which implies that
\begin{equation*}
- \int_{X_M} f_r \Div^{\Wc}(Y_t)^- d\nu \leq \left( \frac{C}{r} + 2n-2 \right)e^{(n-1)t}\nu(X_M). 
\end{equation*}
Finally $\lim_{r \rightarrow \infty} f_r = 1$ and so, by Fatou's Lemma,
\begin{equation*}
- \int_{X_M} \Div^{\Wc} (Y_t)^- d \nu \leq \liminf_{r \rightarrow \infty} - \int_{X_M} f_r \Div^{\Wc}(Y_t)^- d\nu \leq \left( 2n-2\right)e^{(n-1)t}\nu(X_M). 
\end{equation*}
By the remarks at the start of the proof we then have that $\Div^{\Wc} Y_t \in L^1(X_M, d\nu)$. 

To prove the second assertion, first observe that, for any $t \in \Rb$, $\abs{t} = -t+2t^+$. So, 
\begin{align*}
\int_{X_M \setminus K_{2r}}  \abs{\Div^{\Wc} Y_t}d\nu 
  &\leq \int_{X_M} (1-f_r) \abs{\Div^{\Wc} Y_t}d\nu \\
& =- \int_{X_M} (1-f_r) \Div^{\Wc} Y_td\nu+2\int_{X_M} (1-f_r) (\Div^{\Wc} Y_t)^+d\nu.
\end{align*}
Now,
\begin{equation*}
\int_{X_M} (1-f_r) (\Div^{\Wc} Y_t)^+d\nu \leq \int_{X_M} (1-f_r)(n-1) d\nu \leq (n-1) \nu(X_M\setminus K_r),
\end{equation*}
and, by Theorem~\ref{thm:integral_formula},
\begin{align*}
 - \int_{X_M} (1-f_r) \Div^{\Wc} Y_td\nu 
 &= - \int_{X_M}  \Div^{\Wc} \left( (1-f_r)Y_t\right)d\nu + \int_{X_M} \ip{ \nabla^{\Wc}(1-f_r), Y_t} d\nu  \\
& \leq (n-1) \abs{\int_{X_M} \ip{ (1-f_r) Y_t, \nabla^{\Wc} \xi} d\nu} + \frac{Ce^{(n-1)t}}{r} \nu(X_M \setminus K_r) \\
& \leq \left( n-1+\frac{C}{r} \right) e^{(n-1)t} \nu(X_M \setminus K_r).
\end{align*}
Combining the above inequalities establishes the second assertion of the lemma. 
\end{proof}

We finally have all the ingredients to prove Proposition \ref{prop:special_busemann}.

Using Lemma~\ref{lem:L1} we can apply Theorem \ref{thm:integral_formula} to $Y_t(x,\xi)$ and obtain: 
\begin{equation*}
0=\int_{X_M}\left( \Div^{\Wc} Y_t  -(n-1) \left\langle Y_t,\nabla^{\Wc} \xi  \right\rangle \right)d\nu.
\end{equation*}
Moreover, Lemma~\ref{lem:L1} implies that 
\begin{equation*}
\int_{X_M \setminus K_{2r}} \abs{ \Div^{\Wc} Y_t  -(n-1) \left\langle Y_t,\nabla^{\Wc} \xi  \right\rangle }d\nu 
\leq \left( 2n-2+\frac{C}{r} \right) e^{(n-1)t} \nu(X_M \setminus K_r) + (n-1) \nu(X_M \setminus K_{2r}).
\end{equation*}
So for any $\epsilon > 0$, there exists $r > 0$ such that
\begin{equation*}
\int_{X_M \setminus K_{2r}} \abs{ \Div^{\Wc} Y_t  -(n-1) \left\langle Y_t,\nabla^{\Wc} \xi  \right\rangle }d\nu 
\leq \epsilon
\end{equation*}
for all $0 \leq t \leq 1$.

We now choose a countable and locally finite open cover $\{U_i\}$ of $M$ such that each $U_i$ is small enough so that $\pi^{-1}(U_i)$ is a disjoint union of open sets all diffeomorphic to $U_i$. Let $\{\chi_i\}$ be a partition of unity subordinated to $\{U_i\}$. For each $U_i$, we choose one connected component of its lift that we denote by $\wt{U}_i$ and we write $\wt{\chi}_i$ for the lift of $\chi_i$ to $\wt U_i$.

\begin{observation}\label{obs:lifting} If $f \in L^1(X_M, d\nu)$ and $\wt{f}$ is the lift of $f$ to $X\times \partial \wh{X}$, then 
\begin{align*}
\int_{X_M} f d\nu = \sum_{i \in \Nb} \int_{x \in \wt{U}_i} \int_{\partial \wh{X}} \wt{\chi}_i(x) \wt{f}(x,\xi) e^{-(n-1)\xi(x)} d\nu_o dx.
\end{align*}
\end{observation}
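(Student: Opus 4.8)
The plan is to recognize the claimed identity as the standard ``unfolding'' formula for the $\Gamma$-invariant measure $\wt{m}$ descending to $\nu$, in which the partition of unity $\{\wt{\chi}_i\}$ plays the role of a smooth fundamental weight (Bruhat function) for the diagonal action of $\Gamma$ on $X \times \partial \wh{X}$. The two facts driving the computation are: first, that the chosen lifts assemble into a function whose $\Gamma$-orbit sums to $1$, namely $\sum_{i\in\Nb}\sum_{\gamma\in\Gamma}\wt{\chi}_i(\gamma x) = 1$ for every $x\in X$; and second, that the measure $d\wt{m}(x,\xi) = e^{-(n-1)\xi(x)}\,dx\,d\nu_o(\xi)$ is genuinely invariant under the diagonal action. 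For the first, since $\pi'^{-1}(U_i) = \sqcup_{\gamma\in\Gamma}\gamma\wt{U}_i$ and $\wt{\chi}_i$ is supported in the single sheet $\wt{U}_i$ with $\wt{\chi}_i = \chi_i\circ\pi'$ there, the $\Gamma$-translates of $\wt{\chi}_i$ reconstitute the full lift of $\chi_i$, so summing over $i$ and using $\sum_i\chi_i\equiv 1$ on $M$ gives the claim. For the second, I would rewrite $d\wt{m}(x,\xi) = dx\,d\nu_x(\xi)$ using $d\nu_x(\xi) = e^{-(n-1)\xi(x)}\,d\nu_o(\xi)$ (normalization $\xi(o)=0$), after which invariance is immediate from $\Isom$-invariance of $dx$ together with the equivariance $\nu_{\gamma y} = \gamma_*\nu_y$ of the Patterson-Sullivan family.

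With these in hand, I would carry out the unfolding as follows. Fix a measurable fundamental domain $\Fc_X\subset X$ for $\Gamma\curvearrowright X$; then $\Fc_X\times\partial\wh{X}$ is a fundamental domain for the diagonal action (the projection to $X$ already sees a fundamental domain, and the $\partial\wh{X}$ factor comes along for free), so by definition of the descended measure $\int_{X_M} f\,d\nu = \int_{\Fc_X}\int_{\partial\wh{X}}\wt{f}(x,\xi)\,d\nu_x(\xi)\,dx$. Starting instead from the right-hand side and extending each $\wt{\chi}_i$ by zero off $\wt{U}_i$, I would write $\int_X = \sum_{\gamma}\int_{\gamma\Fc_X}$, substitute $x=\gamma y$, and use $\Isom$-invariance of $dx$ together with $\nu_{\gamma y}=\gamma_*\nu_y$ and the $\Gamma$-invariance $\wt{f}(\gamma y,\gamma\eta)=\wt{f}(y,\eta)$ to rewrite the $i$-th term as $\sum_\gamma\int_{\Fc_X}\int_{\partial\wh{X}}\wt{\chi}_i(\gamma y)\wt{f}(y,\eta)\,d\nu_y(\eta)\,dy$. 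Summing over $i$ and collapsing the double sum via $\sum_i\sum_\gamma\wt{\chi}_i(\gamma y)=1$ then returns exactly $\int_{\Fc_X}\int_{\partial\wh{X}}\wt{f}\,d\nu_y\,dy = \int_{X_M}f\,d\nu$, which is the asserted formula once $d\nu_y(\eta)$ is expanded back into $e^{-(n-1)\eta(y)}d\nu_o(\eta)$.

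The only real work is the measure-theoretic bookkeeping: justifying the interchange of the two countable sums (over $i\in\Nb$ and over $\gamma\in\Gamma$) with the integrals, and the decomposition $\int_X = \sum_\gamma\int_{\gamma\Fc_X}$. I expect this to be the main, though routine, obstacle, and I would dispatch it with Tonelli's theorem applied first to the nonnegative integrand $\big(\sum_i\wt{\chi}_i(x)\big)\abs{\wt{f}(x,\xi)}$: since $f\in L^1(X_M,d\nu)$ and the $\wt{\chi}_i$ are nonnegative with orbit-sum $1$, the unfolded integral of $\abs{\wt{f}}$ is finite and equal to $\int_{X_M}\abs{f}\,d\nu$, which legitimizes every rearrangement above by Fubini. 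Local finiteness of $\{U_i\}$ guarantees that on each compact piece only finitely many terms contribute, but the essential control is the global $L^1$ bound.
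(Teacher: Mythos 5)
Your proof is correct. The paper gives no argument at all for this Observation---it is stated bare, treated as immediate from the definition of the measure $\nu$ descended from the $\Gamma$-invariant measure $d\wt{m}(x,\xi)=e^{-(n-1)\xi(x)}\,dx\,d\nu_o(\xi)$---and your unfolding argument (the orbit-sum identity $\sum_{i}\sum_{\gamma\in\Gamma}\wt{\chi}_i(\gamma x)=1$, the $\Gamma$-invariance of $\wt{m}$ via $\nu_{\gamma y}=\gamma_*\nu_y$, and Tonelli applied to $\abs{\wt{f}}$ to license the rearrangements) is precisely the standard justification that the authors leave implicit.
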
 

Next let
\begin{equation*}
\Jc := \{ j \in \Nb : U_j \cap B_{2r}(x_0) \neq \emptyset \}.
\end{equation*}
Because the cover $M = \bigcup U_i$ is locally finite, we see that $\Jc$ is a finite subset of $\Nb$. 

Notice that $\{ \chi_i \circ \pi\}$ is a partition of unity on $X_M$ and so
\begin{align*}
\int_{X_M}\left( \sum_{j \in \Jc} \chi_j \circ \pi \right) & \left( \Div^{\Wc} Y_t  -(n-1) \left\langle Y_t,\nabla^{\Wc} \xi  \right\rangle \right)d\nu 
= -\int_{X_M}\left( \sum_{j \notin \Jc} \chi_j \circ \pi \right) \left( \Div^{\Wc} Y_t  -(n-1) \left\langle Y_t,\nabla^{\Wc} \xi  \right\rangle \right)d\nu \\
& \geq - \int_{X_M \setminus K_{2r}} \abs{ \Div^{\Wc} Y_t  -(n-1) \left\langle Y_t,\nabla^{\Wc} \xi  \right\rangle}d\nu  \geq - \epsilon. 
\end{align*}
Moreover, by Observation~\ref{obs:lifting}
\begin{align*}
\int_{X_M} & \left( \sum_{j \in \Jc} \chi_j \circ \pi \right)  \left( \Div^{\Wc} Y_t  -(n-1) \left\langle Y_t,\nabla^{\Wc} \xi  \right\rangle \right)d\nu \\
& = \sum_{j \in \Jc} \int_{x \in \wt{U_j}}  \int_{\partial \wh{X}} \wt{\chi}_j(x) \left(\Div^{\Wc} \wt{Y}_t  -(n-1) \left\langle \wt{Y}_t,\nabla^{\Wc} \xi  \right\rangle \right)e^{-(n-1)\xi(x)} d\nu_o dx \\
&= \sum_{j \in \Jc} \int_{x \in \wt{U_j}}  \int_{\partial \wh{X}} \Divw\left( \wt{Y}_t  e^{-(n-1)\xi(x)}  \wt{\chi}_j \right)  - \langle  \wt{Y}_t , \nabla \wt{\chi}_j \rangle  e^{-(n-1)\xi(x)}  d\nu_o dx.
\end{align*}
Now because each $ \wt{\chi}_j$ is compactly supported in $\wt{U}_j$, Stokes Theorem implies that 
\begin{align*}
\int_{x\in \wt{U}_j} \Divw\left( \wt{Y}_t  e^{-(n-1)\xi(x)}  \wt{\chi}_j \right) dx=0
\end{align*}
and so by Fubini 
\begin{align*}
\epsilon \geq - \int_{X_M} & \left( \sum_{j \in \Jc} \chi_j \circ \pi \right)  \left( \Div^{\Wc} Y_t  -(n-1) \left\langle Y_t,\nabla^{\Wc} \xi  \right\rangle \right)d\nu \\
&= -\sum_{j \in \Jc}  \int_{\partial \wh{X}} \left( \int_{x \in \wt{U_j}}  \Divw\left( \wt{Y}_t  e^{-(n-1)\xi(x)}  \wt{\chi}_j \right)  - \langle  \wt{Y}_t , \nabla \wt{\chi}_j \rangle  e^{-(n-1)\xi(x)}  dx\right)d\nu_o \\
& = \sum_{j \in \Jc}  \int_{\partial \wh{X}} \left(\int_{x \in \wt{U_j}} \langle  \wt{Y}_t , \nabla \wt{\chi}_j \rangle  e^{-(n-1)\xi(x)}  dx\right)d\nu_o.
\end{align*}
Since the sum is finite, one can send $t \rightarrow 0$ to obtain
\begin{equation*}
 \sum_{j \in \Jc}  \int_{\xi \in \partial \wh{X}} \left(\int_{x\in \wt{U}_j}\langle  \nabla \xi , \nabla \wt{\chi}_j \rangle  e^{-(n-1)\xi(x)} dx \right)  d\nu_o \leq \epsilon.
\end{equation*}

By integration by parts, we have
\begin{align*}
 \sum_{j \in \Jc}  \int_{\xi \in \partial \wh{X}} \left(\int_{x\in \wt{U}_j}e^{-(n-1)\xi(x)} \Delta \wt{\chi}_j  dx \right)  d\nu_o
&= - \sum_{j \in \Jc} \int_{\xi \in \partial \wh{X}} \left(\int_{x\in \wt{U}_j}\langle  \nabla e^{-(n-1)\xi(x)} , \nabla \wt{\chi}_j \rangle   dx \right)  d\nu_o \\
&=  (n-1) \sum_{j \in \Jc}  \int_{\xi \in \partial \wh{X}} \left(\int_{x\in \wt{U}_j}\langle  \nabla \xi , \nabla \wt{\chi}_j \rangle  e^{-(n-1)\xi(x)} dx \right)  d\nu_o.
\end{align*}
\indent So, 
\begin{equation*}
 \sum_{j \in \Jc}  \int_{\xi \in \partial \wh{X}} \left(\int_{x\in \wt{U}_j}e^{-(n-1)\xi(x)} \Delta \wt{\chi}_j  dx \right)  d\nu_o  \leq \frac{\epsilon}{n-1}.
\end{equation*}

By \cite[Proposition 4]{LW2010} (that is still true in our context), $\Delta e^{-(n-1) \xi} \geq 0$ in the sense of distribution. Hence, for all $j \in \Jc$,
\[
\int_{\xi \in \partial \wh{X}} \int_{x\in \wt{U}_j}e^{-(n-1)\xi(x)} \Delta \wt{\chi}_j  dx  \geq 0.
\]
So, we conclude that for all $j \in\Jc$,
\[
\int_{\xi \in \partial \wh{X}} \int_{x\in \wt{U}_j}e^{-(n-1)\xi(x)} \Delta \wt{\chi}_j  dx  \leq  \frac{\epsilon}{n-1}.
\]
Since $\epsilon$ is arbitrarily small, we then deduce that for all $j \in \Nb$,
\[
 \int_{\xi \in \partial \wh{X}} \int_{x\in \wt{U}_j}e^{-(n-1)\xi(x)} \Delta \wt{\chi}_j  dx  =0.
\]
Then, exploiting the fact that $\Delta e^{-(n-1) \xi} \geq 0$ again, we see that for $\nu_0$-almost-every $\xi \in \partial \wh{X}$
\[
\int_{x\in \wt{U}_j}e^{-(n-1)\xi(x)} \Delta \wt{\chi}_j  dx  =0
\]
for every $j \in\Nb$.

\indent In the argument above, one can replace $\wt U_j$ by $g\cdot \wt U_j$ and $\wt \chi_j$ by $g\cdot \wt 
\chi_j$ for any $g \in \Gamma$. So for $\nu_0$-almost-every $\xi \in \partial \wh{X}$
\[
 \int_{x\in g \cdot \wt{U}_j}e^{-(n-1)\xi(x)} \Delta (g \cdot \wt{\chi}_j)  dx  =0
\]
for every $j \in\Nb$ and  every $g \in \Gamma$.
One can now conclude that, for  $\nu_0$-almost-every $\xi \in \partial \wh{X} $, $\Delta e^{-(n-1)\xi(x)} = 0$ in the sense of distribution in the same way as in \cite[p.472]{LW2010}, which concludes the proof of Proposition~\ref{prop:special_busemann}.

\subsection{Final steps}

\begin{proposition} \label{prop:final_step}
Suppose $(X,g)$ is a complete simply connected Riemannian manifold with $\Ric \geq -(n-1)$ and $\Gamma \leq \Isom(X)$ is a discrete group that acts properly and freely on $X$ such that $M=\Gamma \backslash X$ has finite volume (with respect to the Riemannian volume form). 

If there exists $\xi_0 \in \partial \wh{X}$ such that $\Delta \xi_0 \equiv n-1$ then $X$ is isometric to real hyperbolic $n$-space.
\end{proposition}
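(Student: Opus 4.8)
The plan is to upgrade the single identity $\Delta\xi_0 \equiv n-1$ into full curvature rigidity, first pointwise along the horospheres determined by $\xi_0$ and then globally via the finite-volume hypothesis. Since $\Delta \xi_0 = n-1$ is a smooth (constant) right-hand side, elliptic regularity promotes the a priori merely Lipschitz Busemann function $\xi_0$ to a $C^\infty$ function. The first key step is to show $\norm{\nabla \xi_0} \equiv 1$. As $\xi_0$ is $1$-Lipschitz we have $\norm{\nabla\xi_0} \leq 1$, and a direct computation for the smooth function $\xi_0$ gives
\[
\Delta e^{-(n-1)\xi_0} = (n-1)^2 e^{-(n-1)\xi_0}\left( \norm{\nabla\xi_0}^2 - 1\right) \leq 0.
\]
On the other hand, the Laplacian comparison theorem (already invoked above via \cite[Proposition 4]{LW2010}) gives $\Delta e^{-(n-1)\xi_0} \geq 0$ in the sense of distributions, hence classically since $\xi_0$ is smooth. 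Therefore $\Delta e^{-(n-1)\xi_0} \equiv 0$ and $\norm{\nabla\xi_0} \equiv 1$.

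Next I would extract the full Hessian. Because $\norm{\nabla\xi_0}^2 \equiv 1$, differentiating shows $\operatorname{Hess}\xi_0(\nabla\xi_0, \cdot) \equiv 0$, so $\nabla\xi_0$ lies in the kernel of $\operatorname{Hess}\xi_0$ and its integral curves are unit-speed geodesics. Applying the Bochner formula and using that $\Delta\xi_0$ is constant yields
\[
0 = \tfrac12 \Delta \norm{\nabla\xi_0}^2 = \norm{\operatorname{Hess}\xi_0}^2 + \Ric(\nabla\xi_0, \nabla\xi_0).
\]
Viewing $\operatorname{Hess}\xi_0$ as a symmetric endomorphism of the $(n-1)$-dimensional tangent space to the level set $\{\xi_0 = c\}$ (its kernel absorbing the $\nabla\xi_0$ direction) with trace $\Delta\xi_0 = n-1$, the Cauchy--Schwarz inequality gives $\norm{\operatorname{Hess}\xi_0}^2 \geq (n-1)^2/(n-1) = n-1$, while $\Ric(\nabla\xi_0,\nabla\xi_0) \geq -(n-1)$. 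The displayed identity forces equality throughout, so $\Ric(\nabla\xi_0,\nabla\xi_0) = -(n-1)$ and $\operatorname{Hess}\xi_0 = g - d\xi_0 \otimes d\xi_0$.

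This rigid Hessian is exactly the statement that the level sets of $\xi_0$ are totally umbilic with shape operator the identity. Flowing along $\nabla\xi_0$ and using completeness and simple connectivity, I would identify $X \cong \Rb \times N$, where $N = \xi_0^{-1}(0)$, with metric $g = ds^2 + e^{2s}h$ for $h$ the induced metric on $N$; the Riccati equation along the geodesic flow then gives $R(V,\nabla\xi_0)\nabla\xi_0 = -V$ for every $V \perp \nabla\xi_0$, i.e.\ all sectional curvatures of planes containing $\nabla\xi_0$ equal $-1$, and the warped-product Ricci formula combined with $\Ric \geq -(n-1)$ forces $\Ric_h \geq 0$. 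By the Gauss equation the sectional curvatures of planes tangent to the level sets equal $-1$ if and only if $(N,h)$ is flat, so it remains to prove that the fiber $(N,h)$ is isometric to Euclidean space.

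The main obstacle is exactly this last step, and it is where the finite-volume hypothesis must be used. The idea is a homothety-rigidity argument: an isometry of $X$ that fixes the point $z \in \partial\wh{X}$ determined by $\xi_0$ but translates $\xi_0$ by a nonzero constant acts on the fiber as a nontrivial homothety of $(N,h)$, and a complete Riemannian manifold admitting a nontrivial homothety is necessarily flat Euclidean space; hence $(N,h) \cong \Rb^{n-1}$ and $X \cong \Hb^n$. To produce such an element I would use that if instead the relevant subgroup preserved every horosphere $\{\xi_0 = c\}$ acting by isometries, then, since the volume form is $e^{(n-1)s}\,dV_h\,ds$, the region swept toward $z$ would contribute infinite volume to $\Gamma\backslash X$, contradicting finite co-volume. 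The delicate point, and the one I expect to require the most care, is handling elements of $\Gamma$ that move $z$ to another boundary point: each such element produces a second Busemann function with the same rigid Hessian, hence a second family of radial directions along which the curvature is $-1$, and the argument must either reduce to the stabilizer of $z$ or exploit that the $\Gamma$-orbit of special directions is rich enough to force all sectional curvatures to equal $-1$.
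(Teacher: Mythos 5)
Your differential-geometric analysis (smoothness of $\xi_0$, the identity $\norm{\nabla\xi_0}\equiv 1$ via Laplacian comparison, the Bochner/Cauchy--Schwarz equality case giving $\operatorname{Hess}\xi_0 = g - d\xi_0\otimes d\xi_0$, and the warped-product splitting $g = ds^2+e^{2s}h$) is correct, but the proof is not complete: the case in which some $\gamma\in\Gamma$ moves $\xi_0$ to a \emph{different} point of $\partial\wh{X}$ --- which you flag and leave open --- is a genuine gap, and it is the main case rather than a marginal one. Indeed, when $X$ actually is $\Hb^n$ and $\Gamma$ is a lattice, no boundary point is fixed by all of $\Gamma$, so your two worked-out cases (a stabilizer element translating $\xi_0$, or the whole group preserving every horosphere) are exactly the cases that cannot occur there. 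Moreover, your infinite-volume argument requires the horosphere-preserving group to be all of $\Gamma$: if only the stabilizer $\Gamma_z$ of $z$ preserves each horosphere, then $\Gamma_z\backslash X$ need not have finite volume (points of the horoball region can be identified by elements outside $\Gamma_z$), and no contradiction results. So, as written, your argument proves the proposition only under the extra hypothesis $\gamma\cdot\xi_0=\xi_0$ for all $\gamma\in\Gamma$.

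What is missing is precisely a ``two Busemann functions'' rigidity statement: if $\xi_1=\gamma\cdot\xi_0\neq\xi_0$, then $\Delta\xi_1\equiv n-1$ as well (by equivariance of the Laplacian), and one must show that the existence of two distinct such functions forces $X\cong\Hb^n$. This is exactly how the paper proceeds: it quotes this statement from the proof of Theorem 3.3 in \cite{W2008} (see also the remark after Theorem 6 in \cite{LW2010}). Concretely, $u_i=e^{\xi_i}$ are two independent solutions of the Obata-type equation $\operatorname{Hess} u = u\,g$, and their difference is a nonconstant solution vanishing at $o$, which via a Tashiro-type classification yields hyperbolicity --- real work that your closing sketch (``the orbit of special directions is rich enough'') does not supply. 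Note also that, with this cited result in hand, the paper's own proof is much shorter and never constructs the warped product: either there exist two special Busemann functions (done, by \cite{W2008}), or $\xi_0$ is the unique one, hence $\Gamma$-fixed, hence $\nabla\xi_0$ descends to a vector field $V$ on $M$ with $\norm{V}\leq 1$ and $\Div V\equiv n-1>0$, which is impossible on a finite-volume manifold by Karp's divergence theorem \cite{K1981}. Your homothety-plus-volume argument is a viable alternative to Karp's theorem in that second case, but the first case is the crux, and it is the one you have not proved.
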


\begin{proof}
By the proof of Theorem 3.3 in~\cite{W2008} (also see the remark after Theorem 6 in~\cite{LW2010}) if there exists some $\xi_1 \in \partial \wh{X}$ such that $\Delta \xi_1 \equiv n-1$ and $\xi_1 \neq \xi_0$ then $X$ is isometric to real hyperbolic $n$-space. So, suppose for a contradiction, that we have 
\begin{equation*}
\{ \xi_0 \} = \{ \xi \in \partial \wh{X} : \Delta \xi \equiv n-1 \}.
\end{equation*} 
Since
\begin{equation*}
\Delta (\gamma \cdot \xi)(x) = (\Delta \xi)( \gamma^{-1} x)
\end{equation*}
we see that $\gamma \cdot \xi_0$ also has constant Laplacian equal to $n-1$. Thus $\gamma \cdot \xi_0 = \xi_0$ for all $\gamma \in \Gamma$.

Now if $\gamma \in \Gamma$ we see that
\begin{equation*}
\textrm{diff}(\gamma)_{\gamma^{-1} x} \nabla \xi_0(\gamma^{-1}x) = \nabla \Big( \xi_0(\gamma^{-1}x) \Big) = \nabla \Big(  \xi_0(\gamma^{-1}x)-\xi_0(\gamma^{-1}o) \Big) = \nabla (\gamma \cdot \xi_0)(x) = \nabla \xi_0(x).
\end{equation*}
Thus, $\nabla \xi_0(x)$ is a $\Gamma$-invariant vector field, and therefore descends to a vector field $V$ on $M$. 

Now, $\Div V = n-1$ since $\Div \nabla \xi_0 = \Delta \xi_0 \equiv n-1$, and moreover $\norm{V} \leq 1$. But, since $M$ has finite volume, there cannot exists a vector field $V$ with $\norm{V}, \Div V \in L^1(M)$ and $\Div V > 0$ (see for instance~\cite{K1981}). 
\end{proof}

Putting together Proposition \ref{prop:special_busemann} and Proposition \ref{prop:final_step} finishes the proof of Theorem \ref{thm:riem_finite_volume}.

\begin{remark}
Proposition \ref{prop:final_step} is actually not necessary for the proof of Theorem \ref{thm:riem_finite_volume}. Indeed, since we assume that $X$ has bounded curvature, we can replace Proposition \ref{prop:final_step} by \cite[Theorem 6]{LW2010}. However we included this result since it removes the need for the bounded curvature assumption from this step. In particular, we want to emphasize that the bounded curvature assumption is only used in order to get the heat kernel estimates needed for Proposition \ref{prop:special_busemann}.
\end{remark}

\section{Entropy rigidity for Hilbert metrics}\label{sec:Hilbert}

We begin by observing that the Blaschke metric has bounded sectional curvature. For the definition and some properties of the Blaschke metric, we refer to \cite{Lof2001,BH2013}.

\begin{lemma} \label{lem:bounded_curvature}
 Let $\Omega$ be a proper convex open set in $\Pb(\Rb^{n+1})$. There exists a universal constant $C_n$, depending only on the dimension such that the sectional curvature of the Blaschke metric on $\Omega$ is bounded above by $C_n$ and below by $-C_n$.
\end{lemma}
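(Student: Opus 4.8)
The plan is to exploit the fact that the Blaschke metric is a projective invariant, and combine this with the Benzécri compactness theorem to reduce the global curvature bound to a single continuity statement.

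First, I would recall the construction of the Blaschke metric and record its equivariance. Writing $\Omega = \Pb(C)$ for a properly convex cone $C \subset \Rb^{n+1}$, the Cheng--Yau theorem produces a unique complete hyperbolic affine sphere $\Hc \subset C$ asymptotic to $\partial C$ with affine mean curvature $-1$, and $B_\Omega$ is the metric induced on $\Omega$ by the Blaschke (affine) second fundamental form of $\Hc$ (see \cite{Lof2001, BH2013}). Because this construction is built from unimodular affine differential geometry and is compatible with scaling the cone, it is $\PGL_{n+1}(\Rb)$-equivariant: any $g$ with $g\Omega = \Omega'$ is an isometry $(\Omega, B_\Omega) \to (\Omega', B_{\Omega'})$. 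Consequently the function assigning to a pointed domain $(\Omega, x)$ the quantity
\[
\kappa(\Omega, x) := \max_{P} \abs{K_\Omega(P)},
\]
where $P$ ranges over tangent $2$-planes at $x$ and $K_\Omega(P)$ denotes the sectional curvature of $B_\Omega$, is invariant under the diagonal $\PGL_{n+1}(\Rb)$-action on pointed domains.

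Second, I would invoke the Benzécri compactness theorem: the space $\mathcal{X}$ of pairs $(\Omega, x)$ with $\Omega \subset \Pb(\Rb^{n+1})$ a proper convex open set and $x \in \Omega$, equipped with the topology of Hausdorff convergence of closures together with convergence of base points, carries a proper and cocompact $\PGL_{n+1}(\Rb)$-action; in particular the quotient $\mathcal{X}/\PGL_{n+1}(\Rb)$ is compact.

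Third --- and this is where the real work lies --- I would establish that $\kappa \colon \mathcal{X} \to \Rb$ is continuous. If $(\Omega_k, x_k) \to (\Omega_\infty, x_\infty)$ in $\mathcal{X}$, then in a fixed affine chart the convex functions defining the affine spheres converge: each such $u$ solves the Monge--Ampère equation $\det D^2 u = (-1/u)^{n+2}$ on $\Omega$ with vanishing boundary data, and interior Schauder and Pogorelov-type estimates give local $C^{k}$ convergence of the solutions on a neighborhood of $x_\infty$. Since $B_\Omega$ and hence its full curvature tensor are smooth expressions in derivatives of $u$ up to fourth order, it follows that $\kappa(\Omega_k, x_k) \to \kappa(\Omega_\infty, x_\infty)$. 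This continuous dependence of the affine sphere on the domain is the technical crux and must be extracted from the a priori estimates underlying Cheng--Yau's solution of the equation, as recorded in \cite{Lof2001, BH2013}; everything else is formal.

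Finally, with continuity and invariance in hand, $\kappa$ descends to a continuous function on the compact quotient $\mathcal{X}/\PGL_{n+1}(\Rb)$, and therefore attains a finite maximum $C_n$ depending only on $n$. By invariance, $\kappa(\Omega, x) \leq C_n$ for every proper convex open $\Omega$ and every $x \in \Omega$, which is exactly the asserted two-sided bound on the sectional curvature. Equivalently, one may argue by contradiction: a sequence of pointed domains with $\kappa \to \infty$ would, after applying Benzécri to normalize, subconverge to a limit at which $\kappa$ is finite, contradicting invariance. As noted, the only genuine obstacle is the continuous dependence of the Monge--Ampère solution on the domain in the Benzécri topology.
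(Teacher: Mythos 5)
Your proposal is correct and follows essentially the same route as the paper: $\PGL_{n+1}(\Rb)$-invariance of the pointwise curvature bound, continuity of that quantity in the Benz\'ecri topology (which the paper simply cites as Corollary 3.3 of \cite{BH2013}, the same continuous-dependence result for the affine sphere that you sketch via Monge--Amp\`ere estimates), and cocompactness of the $\PGL_{n+1}(\Rb)$-action on pointed convex domains. The only difference is that you outline the proof of the continuity step rather than citing it, which the paper treats as a black box.
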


\begin{proof}
 Benz\'ecri \cite{Benz1960} proved that the action of $\mathrm{PGL}_{n+1}(\Rb)$ on the set of pointed proper convex open sets 
 \begin{align*}
 \mathcal{E}:= \{ (x,\Omega): \Omega \subset \Pb(\Rb^{n+1}) \text{ is a proper convex open set and } x \in \Omega\}
 \end{align*}
 is cocompact, so all we have to show is that the functions that, to an element $(\Omega, x) \in \mathcal{E}$ associates the maximum and minimum of the sectional curvature of the Blaschke metric at $x$, is $\mathrm{PGL}_{n+1}(\Rb)$-invariant and continuous. The invariance is clear from the definition of the Blaschke metric, and the continuity follows from Corollary 3.3 in \cite{BH2013}.
\end{proof}

We next prove Theorem~\ref{thm:hilbert_finite_vol} from the introduction:

\begin{theorem}
Suppose $\Omega \subset \Pb(\Rb^{n+1})$ is a proper convex open set and there exists a discrete group $\Gamma \leq \Aut(\Omega)$ which acts properly, freely, and with finite co-volume (with respect to $\mu_B$). Then $\delta_\Gamma(\Omega, H_\Omega) \leq n-1$ with equality if and only if $\Omega$ is projectively isomorphic to $\Bc$ (and in particular $(\Omega, H_\Omega)$ is isometric to $\Hb^n$). 
\end{theorem}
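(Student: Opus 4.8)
The plan is to transfer the entropy equality from the Hilbert metric $H_\Omega$ to the Blaschke metric $B_\Omega$ and then invoke Theorem~\ref{thm:riem_finite_volume}, whose hypotheses the Blaschke metric is essentially designed to satisfy.

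First I would prove the upper bound. By Tholozan's comparison (Theorem~\ref{thm:tho}) we have $\delta_\Gamma(\Omega, H_\Omega) \leq \delta_\Gamma(\Omega, B_\Omega)$. The Blaschke volume form $d\Vol$ is the Riemannian volume of $B_\Omega$, hence is $\Isom(\Omega, B_\Omega)$-invariant, finite on bounded sets, and positive on open sets, so the elementary inequality recorded in the introduction gives $\delta_\Gamma(\Omega, B_\Omega) \leq h_{vol}(\Omega, B_\Omega, d\Vol)$. Finally, Calabi's theorem~\cite{C1972} gives $\Ric \geq -(n-1)$ for $B_\Omega$, so Bishop--Gromov yields $h_{vol}(\Omega, B_\Omega, d\Vol) \leq n-1$; chaining these three bounds proves $\delta_\Gamma(\Omega, H_\Omega) \leq n-1$.

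For the rigidity direction, assume $\delta_\Gamma(\Omega, H_\Omega) = n-1$. The chain above forces $\delta_\Gamma(\Omega, B_\Omega) = n-1$, and I would then verify that $(\Omega, B_\Omega)$ and $\Gamma$ satisfy every hypothesis of Theorem~\ref{thm:riem_finite_volume}: the set $\Omega$ is convex, hence simply connected, and $B_\Omega$ is complete (a folklore property of the Blaschke metric); $\Ric \geq -(n-1)$ by Calabi; the sectional curvature is bounded by Lemma~\ref{lem:bounded_curvature}; and $\Gamma \leq \Aut(\Omega)$ acts properly and freely by isometries of $B_\Omega$. The only point requiring the two metrics to be reconciled is finiteness of the Blaschke volume of $\Gamma \backslash \Omega$: by Benoist--Hulin~\cite{BH2013} the distances $H_\Omega$ and $B_\Omega$ are bi-Lipschitz equivalent, so the measures $\mu_B$ and $d\Vol$ are comparable, and finite co-volume with respect to $\mu_B$ implies finite co-volume with respect to $d\Vol$. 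Theorem~\ref{thm:riem_finite_volume} then shows that $(\Omega, B_\Omega)$ is isometric to $\Hb^n$.

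It remains to pass from this metric rigidity to projective rigidity, and this is the step I expect to be the crux. Here I would invoke the folklore fact from affine differential geometry that the Blaschke metric of a proper convex domain has constant curvature $-1$ (equivalently, the cubic form of the associated hyperbolic affine sphere vanishes) precisely when $\Omega$ is an ellipsoid, i.e.\ projectively isomorphic to $\Bc$. The converse implication is routine: if $\Omega$ is projectively isomorphic to $\Bc$, then $(\Omega, H_\Omega)$ is the Klein--Beltrami model of $\Hb^n$ and $\Gamma$ is a finite-covolume lattice, for which $\delta_\Gamma = n-1$. The genuine geometric obstacle is thus the identification of ellipsoids as exactly the domains whose Blaschke metric is hyperbolic; all of the remaining work merely transports estimates between the Hilbert and Blaschke structures.
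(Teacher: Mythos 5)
Your proposal is correct and follows essentially the same route as the paper's proof: transfer the problem to the Blaschke metric via Tholozan's comparison, verify the hypotheses of Theorem~\ref{thm:riem_finite_volume} (Calabi's Ricci bound, Lemma~\ref{lem:bounded_curvature} for bounded curvature, and the Benoist--Hulin bi-Lipschitz comparison to transfer finite co-volume), and then identify domains whose Blaschke metric is hyperbolic as ellipsoids. The only cosmetic differences are that the paper cites \cite[Proposition 2.6]{BH2013} for the volume transfer and \cite[Theorem 1]{Lof2001} for the final identification, where you instead invoke the (equivalent) folklore statement via the equality case of Calabi's estimate and the vanishing of the cubic form.
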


\begin{proof} Let $B_\Omega$ be the Blaschke metric on $\Omega$. Then 
\begin{enumerate}
\item $\Gamma$ acts by isometries on $(\Omega, B_\Omega)$ and the action is proper and free,
\item $B_\Omega$ has bounded sectional curvature by Lemma~\ref{lem:bounded_curvature}, 
\item  $B_\Omega$ has Ricci curvature bounded below by $-(n-1)$ by a result of Calabi~\cite{C1972},
\item by Theorem~\ref{thm:tho}, $\delta_\Gamma(\Omega, B_\Omega) = n-1$, 
\item by \cite[Proposition 2.6]{BH2013}, $\Gamma \backslash \Omega$ has finite volume with respect to the Riemannian volume form induced by $B_\Omega$. 
\end{enumerate}
Thus, the Blaschke metric satisfies all of the assumptions of Theorem \ref{thm:riem_finite_volume}, so $(\Omega, B_\Omega)$ is isometric to the real hyperbolic space. Hence, by definition of the Blaschke metric, $(\Omega, H_\Omega)$ is the Klein--Beltrami model of hyperbolic space (see \cite[Theorem 1]{Lof2001}). 
\end{proof}

Since $\delta_\Gamma(\Omega, H_\Omega) = h_{vol}(\Omega, H_\Omega, \mu_B)$ when $\Gamma$ acts co-compactly on $\Omega$ we immediately deduce Theorem~\ref{thm:hilbert_compact} from the introduction:

\begin{corollary}
Suppose $\Omega \subset \Pb(\Rb^{n+1})$ is a proper convex open set and there exists a discrete group $\Gamma \leq \Aut(\Omega)$ which acts properly, freely, and cocompactly. Then $h_{vol}(\Omega, H_\Omega, \mu_B) \leq n-1$ with equality if and only if $\Omega$ is projectively isomorphic to $\Bc$ (and in particular $(\Omega, H_\Omega)$ is isometric to $\Hb^n$). 
\end{corollary}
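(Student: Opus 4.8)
The plan is to deduce this corollary directly from Theorem~\ref{thm:hilbert_finite_vol}, which immediately precedes it, by invoking the standard fact that the Poincar\'e exponent and the volume growth entropy coincide whenever the group acts cocompactly. No new geometric input is needed; the work is entirely in matching hypotheses and transporting the conclusion from $\delta_\Gamma$ to $h_{vol}$.

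First I would observe that a properly discontinuous, free, cocompact action is in particular an action with finite co-volume with respect to $\mu_B$: the quotient $\Gamma \backslash \Omega$ is compact, and since $\mu_B$ is a locally finite measure it assigns finite total mass to a compact space. Thus all the hypotheses of Theorem~\ref{thm:hilbert_finite_vol} hold, and that theorem yields $\delta_\Gamma(\Omega, H_\Omega) \leq n-1$, with equality if and only if $\Omega$ is projectively isomorphic to $\Bc$ (in which case $(\Omega, H_\Omega)$ is the Klein--Beltrami model of $\Hb^n$). Next I would record that $\mu_B$ is $\Aut(\Omega)$-invariant, finite on bounded sets, and positive on open sets, so that the criterion recalled in the introduction (following the proof of Lemma 4.5 in \cite{Q2006}) applies: since the $\Gamma$-action is cocompact, one has
\[
h_{vol}(\Omega, H_\Omega, \mu_B) = \delta_\Gamma(\Omega, H_\Omega).
\]
Combining the two displayed facts transports both the inequality and its equality case to $h_{vol}$, giving $h_{vol}(\Omega, H_\Omega, \mu_B) \leq n-1$ with equality precisely when $\Omega$ is projectively isomorphic to $\Bc$.

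I do not expect any genuine obstacle here: the statement is a formal corollary of the finite co-volume theorem. The only points deserving a moment's care are verifying that $\mu_B$ indeed satisfies the invariance, local finiteness, and positivity hypotheses required for the equality $\delta_\Gamma = h_{vol}$, and confirming that a compact quotient has finite $\mu_B$-volume so that Theorem~\ref{thm:hilbert_finite_vol} is applicable. Both are routine once the definitions of the Hilbert metric and the Busemann--Hausdorff measure are in hand.
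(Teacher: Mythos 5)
Your proposal is correct and matches the paper's own argument: the paper likewise deduces this corollary from Theorem~\ref{thm:hilbert_finite_vol} by noting that $\delta_\Gamma(\Omega, H_\Omega) = h_{vol}(\Omega, H_\Omega, \mu_B)$ when the action is cocompact. The only difference is that you spell out the routine verifications (finite co-volume of a compact quotient, invariance and local finiteness of $\mu_B$) that the paper leaves implicit.
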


In order to prove Corollary \ref{cor:hilbert_finite_vol_2} from the introduction, we will need the following:

\begin{proposition}\label{p:vice_versa}
Suppose $\Omega \subset \Pb(\Rb^{n+1})$ is a proper convex open set and there exists a discrete group $\Gamma \leq \Aut(\Omega)$ which acts properly, freely, and with finite co-volume (with respect to $\mu_B$). Then $\Omega$ is strictly convex if and only if $\partial \Omega$ is $C^1$.
\end{proposition}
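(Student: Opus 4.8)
The plan is to use projective duality to cut the equivalence down to a single implication, and then to prove that implication with the dynamics of the $\Gamma$-action. Recall that the dual $\Omega^* \subset \Pb((\Rb^{n+1})^*)$ (the set of projective hyperplanes missing $\overline{\Omega}$) is again a proper convex open set, and that the contragredient $g \mapsto (g^{-1})^{T}$ identifies $\Gamma$ with a discrete group $\Gamma^* \leq \Aut(\Omega^*)$ acting properly and freely. Two standard duality dictionaries are the engine of the argument: $\Omega$ is strictly convex if and only if $\partial \Omega^*$ is $C^1$, and $\partial \Omega$ is $C^1$ if and only if $\Omega^*$ is strictly convex (under the inclusion-reversing correspondence of faces, a nontrivial segment on one boundary matches a non-$C^1$ point on the other).

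First I would verify that finite co-volume is inherited by the dual action, so that $\Gamma^* \backslash \Omega^*$ again has finite volume; this is where I would invoke the self-duality of geometric finiteness from \cite{CM2014_geodesic_flow}. Granting this, it suffices to prove the single \emph{universal} implication: whenever a proper convex $\Omega$ admits a group acting properly, freely and with finite co-volume, strict convexity of $\Omega$ forces $\partial \Omega$ to be $C^1$. This gives the forward direction at once, and the converse follows by duality: if $\partial \Omega$ is $C^1$ then $\Omega^*$ is strictly convex, the universal implication applied to $(\Omega^*,\Gamma^*)$ shows $\partial \Omega^*$ is $C^1$, and dualizing back shows $\Omega$ is strictly convex.

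To prove the universal implication I would assume $\Omega$ strictly convex and $\partial\Omega$ not $C^1$ at some $p$, so that $p$ carries two distinct supporting hyperplanes. Because $\Gamma\backslash\Omega$ has finite volume, $\Gamma$ is geometrically finite and every boundary point is either a conical limit point or a bounded parabolic point. At a conical point I would renormalize: choosing $\gamma_k \in \Gamma$ whose inverses keep a basepoint in a fixed compact part of $\Omega$ while expanding a neighborhood of $p$, Benz\'ecri's cocompactness \cite{Benz1960} lets $\gamma_k^{-1}\Omega$ subconverge to a proper convex $\Omega_\infty$; since failure of $C^1$-regularity is a projectively closed condition it survives in the limit, and tracking the resulting configuration produces a nontrivial segment on $\partial \Omega$, contradicting strict convexity. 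The main obstacle is the \textbf{bounded parabolic (cusp) points}, where this renormalization degenerates and one cannot simply appeal to recurrence; here I would instead use the explicit description of finite-volume cusps in a strictly convex Hilbert geometry from \cite{CM2014_geodesic_flow} to rule out a non-$C^1$ point directly. Equivalently, one may package both cases by showing that strict convexity together with finite co-volume forces a group-theoretic relative-hyperbolicity property that is shared by $\Gamma \cong \Gamma^*$, and then reading off $C^1$-regularity of $\partial\Omega$ from strict convexity of $\Omega^*$.
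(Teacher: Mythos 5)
Your duality reduction is exactly the paper's: both arguments use the two dictionaries ($\Omega$ strictly convex if and only if $\partial\Omega^*$ is $C^1$, and $\partial\Omega$ is $C^1$ if and only if $\Omega^*$ is strictly convex) together with invariance of finite co-volume under duality, to reduce the equivalence to the single implication ``strictly convex $+$ finite co-volume $\Rightarrow$ $C^1$ boundary.'' One correction already at this stage: the duality of finite co-volume must be quoted in a form valid for \emph{arbitrary} properly convex $\Omega$ --- the paper uses \cite[Corollary 6.7]{CLT2015} --- because in the converse direction you apply it to a domain known only to have $C^1$ boundary, not to be strictly convex; the geometric-finiteness self-duality of Crampon--Marquis is developed only for domains that are both.

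The genuine gap is in your proof of that single implication, and the core problem is circularity. The dichotomy ``every boundary point is conical or bounded parabolic'' for finite co-volume actions, and the ``explicit description of finite-volume cusps'' you invoke for the hard (parabolic) case, come from \cite{CM_geo_fini} and \cite{CM2014_geodesic_flow}, whose geometric-finiteness theory carries the standing hypothesis that $\Omega$ is strictly convex \emph{with $C^1$ boundary}; you cannot use those results to prove $C^1$ regularity. Your conical-point renormalization is also not sound as stated: since $\gamma_k\in\Gamma\leq\Aut(\Omega)$, the sets $\gamma_k^{-1}\Omega$ all equal $\Omega$, so Benz\'ecri's theorem produces no new limit domain, and the actual difficulty --- two distinct supporting hyperplanes at $p$ may collapse to a single hyperplane under the degenerating maps $\gamma_k^{-1}$, so ``failure of $C^1$ is a closed condition'' does not survive along the orbit --- is never addressed. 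What the paper does instead is verify, using strict convexity alone, the hypotheses of \cite[Theorem 0.15]{CLT2015}: a thick-thin decomposition (compactness of the thick part via Benz\'ecri's theorem and a Margulis constant; thin components are Margulis tubes or star-shaped cusp regions $\Omega_{\varepsilon}(\Lambda)$ by \cite[Lemma 8.2]{CLT2015}; only finitely many tubes) shows that $\Gamma\backslash\Omega$ is topologically tame with parabolic holonomy on each end, and that theorem then delivers $C^1$ regularity. Your closing suggestion to ``package both cases'' via relative hyperbolicity is essentially the conclusion of that same theorem; its hypotheses --- tameness and parabolic end holonomy --- are precisely the content your sketch is missing.
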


Before proving the proposition, we first deduce the corollary.

\begin{corollary}
Suppose $\Omega \subset \Pb(\Rb^{n+1})$ is a proper convex open set which is either strictly convex or has $C^1$ boundary and there exists a discrete group $\Gamma \leq \Aut(\Omega)$ which acts properly, freely, and with finite co-volume (with respect to $\mu_B$). Then $h_{vol}(\Omega, H_\Omega, \mu_B) \leq n-1$ with equality if and only if $\Omega$ is projectively isomorphic to $\Bc$ (and in particular $(\Omega, H_\Omega)$ is isometric to $\Hb^n$). 
\end{corollary}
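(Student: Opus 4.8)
The plan is to observe that this corollary is a short synthesis of Theorem~\ref{thm:hilbert_finite_vol}, Proposition~\ref{p:vice_versa}, and the equality of the two asymptotic invariants established by Crampon and Marquis in the strictly convex $C^1$ setting. The key point is that, under the finite co-volume hypothesis, the two alternative assumptions in the statement---strict convexity and $C^1$ regularity of the boundary---are in fact equivalent, so in either case $\Omega$ enjoys \emph{both} properties simultaneously, which is precisely the regularity needed to pass from $\delta_\Gamma$ to $h_{vol}$.

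First I would apply Proposition~\ref{p:vice_versa}: since $\Gamma \leq \Aut(\Omega)$ acts properly, freely, and with finite co-volume, $\Omega$ is strictly convex if and only if $\partial \Omega$ is $C^1$. Hence whichever of the two hypotheses is assumed, $\Omega$ is simultaneously strictly convex and has $C^1$ boundary. With both properties in hand, the result of Crampon and Marquis~\cite[Th\'eor\`eme 9.2]{CM2014_geodesic_flow} applies and yields
\begin{equation*}
h_{vol}(\Omega, H_\Omega, \mu_B) = \delta_\Gamma(\Omega, H_\Omega).
\end{equation*}

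Finally, Theorem~\ref{thm:hilbert_finite_vol} gives $\delta_\Gamma(\Omega, H_\Omega) \leq n-1$ with equality if and only if $\Omega$ is projectively isomorphic to $\Bc$; combining this with the displayed equality transfers the same dichotomy to $h_{vol}(\Omega, H_\Omega, \mu_B)$, which completes the proof. The genuine content of the argument is thus not in the corollary itself but in Proposition~\ref{p:vice_versa} (which upgrades the disjunctive hypothesis into a conjunction) together with the external dynamical input identifying the volume entropy with the Poincar\'e exponent in the regular case. I expect Proposition~\ref{p:vice_versa} to be the main obstacle, since establishing that strict convexity and $C^1$ regularity of the boundary coincide for non-cocompact finite-volume quotients requires controlling the geometry of the cusps rather than relying on the homogeneity furnished by a cocompact action.
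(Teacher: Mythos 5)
Your proof is correct and follows exactly the paper's own argument: apply Proposition~\ref{p:vice_versa} to conclude that $\Omega$ is both strictly convex and has $C^1$ boundary, invoke Crampon--Marquis to identify $h_{vol}(\Omega, H_\Omega, \mu_B)$ with $\delta_\Gamma(\Omega, H_\Omega)$, and then conclude via Theorem~\ref{thm:hilbert_finite_vol}. Your closing remark is also accurate: the paper treats the corollary as an immediate consequence of these three ingredients, with all the real work residing in the proof of Proposition~\ref{p:vice_versa}.
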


\begin{proof}[Proof, assuming Proposition \ref{p:vice_versa}]
By Proposition~\ref{p:vice_versa}, $\partial \Omega$ is $C^1$ and $\Omega$ is strictly convex. Thus by~\cite[Th\'eor\`eme 9.2]{CM2014_geodesic_flow} 
\begin{equation*}
h_{vol}(\Omega, H_\Omega, \mu_B) = \delta_\Gamma(\Omega, H_\Omega).
\end{equation*}
So the corollary follows from Theorem~\ref{thm:hilbert_finite_vol}.
\end{proof}

\subsection{Proof of Proposition \ref{p:vice_versa}}

We begin by recalling some constructions and results. 

\subsubsection*{Duality} The dual of a proper convex open set of $\Pb(\Rb^{n+1})$ is the set 
\begin{align*}
\Omega^* = \{ \varphi  \in \Pb((\Rb^{n+1})^*) : \ker \varphi \cap \overline{\Omega} \neq \emptyset\}.
\end{align*}
It is straightforward to verify that $\Omega^*$ is a proper convex open subset of $\Pb((\Rb^{n+1})^*)$. 

Associated with a point $p \in \Pb(\Rb^{n+1})$ is the hyperplane $p^{**}$ in $\Pb((\Rb^{n+1})^*)$ consisting of all $\varphi \in\Pb((\Rb^{n+1})^*)$ with $\varphi(p) = 0$. For convex sets, we have the following connection between the boundaries of $\Omega$ and $\Omega^*$:

\begin{observation}
Suppose $\Omega \subset \Pb(\Rb^{n+1})$ is a proper convex open set. Then $p \in \partial \Omega$ if and only if $p^{**}$ is a supporting hyperplane of $\Omega^*$. Likewise, $\varphi \in \partial \Omega^*$ if and only if $\ker \varphi$  is a supporting hyperplane of $\Omega$.
\end{observation}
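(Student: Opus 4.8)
The plan is to lift everything to linear cones and reduce both assertions to an elementary sign analysis of linear functionals, using the supporting/separating hyperplane theorem and the bipolar theorem. I would choose a proper open convex cone $C \subset \Rb^{n+1}$ with $\Pb(C) = \Omega$, and set $C^* = \{ \varphi \in (\Rb^{n+1})^* : \varphi(x) > 0 \text{ for all } x \in \overline{C} \setminus \{0\} \}$, the open dual cone, so that $\Omega^* = \Pb(C^*)$. Since $C$ is proper, $C^*$ has nonempty interior and the bipolar theorem gives $\overline{C^{**}} = \overline{C}$ under the canonical identification $(\Rb^{n+1})^{**} = \Rb^{n+1}$. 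My first move is to note that the second assertion is a formal consequence of the first: for $\varphi \in \Pb((\Rb^{n+1})^*)$ the hyperplane $\varphi^{**}$ in $\Pb((\Rb^{n+1})^{**}) = \Pb(\Rb^{n+1})$ is exactly $\ker \varphi$, and proper convex open sets satisfy $\Omega^{**} = \Omega$; hence applying the first assertion to $\Omega^*$ in place of $\Omega$ yields the second verbatim. So it suffices to prove assertion (1): $p^{**}$ is a supporting hyperplane of $\Omega^*$ if and only if $p \in \partial \Omega$.

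To prove (1), I would fix a lift $p \in \Rb^{n+1} \setminus \{0\}$ and consider the linear functional $\ell$ on $(\Rb^{n+1})^*$ given by $\ell(\varphi) = \varphi(p)$; then $p^{**} = \Pb(\ker \ell)$, so deciding whether $p^{**}$ is external, supporting, or secant to $\Omega^*$ amounts to deciding whether $\ell$ is strictly one-signed on $\overline{C^*} \setminus \{0\}$, one-signed but vanishing only on $\partial C^*$, or takes both signs on $C^*$. I would then split into three cases according to the position of $p$ relative to $\overline{C}$. If $[p] \in \Omega$, then $p$ lies in the open cone $C$; any $\varphi \in \overline{C^*} \setminus \{0\}$ is a nonzero functional nonnegative on $\overline{C}$, so it cannot vanish at the interior point $p$ (a nonzero linear functional has no interior minimum on a cone it spans), whence $\ell > 0$ on $\overline{C^*} \setminus \{0\}$ and $p^{**}$ misses $\overline{\Omega^*}$ entirely, so it is not supporting. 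If $[p] \in \partial \Omega$, then $p \in \overline{C} \setminus \{0\}$, giving $\ell > 0$ on the interior $C^*$ (where functionals are strictly positive on $\overline{C} \setminus \{0\}$), while the supporting hyperplane theorem for $\overline{C}$ at $p$ produces a nonzero $\varphi \geq 0$ on $\overline{C}$ with $\varphi(p) = 0$, i.e. a point of $\partial \Omega^* \cap p^{**}$; hence $p^{**}$ touches $\overline{\Omega^*}$ but avoids $\Omega^*$, so it is supporting.

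The only case needing more than bookkeeping is $[p] \notin \overline{\Omega}$, which I expect to be the main (though still standard) obstacle, since one must rule out that $p^{**}$ merely misses $\overline{\Omega^*}$ externally rather than cutting through it. Here I would argue by contradiction via the bipolar theorem: if $\ell \geq 0$ on all of $C^*$ then $\varphi(p) \geq 0$ for all $\varphi \in \overline{C^*}$, so $p \in \overline{C^{**}} = \overline{C}$; and if $\ell \leq 0$ on all of $C^*$ then $-p \in \overline{C}$; either way the line $\Rb p$ meets $\overline{C} \setminus \{0\}$, so $[p] \in \Pb(\overline{C}) = \overline{\Omega}$, contradicting $[p] \notin \overline{\Omega}$. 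Therefore $\ell$ takes both strictly positive and strictly negative values on the open convex cone $C^*$, and by convexity it vanishes at some $\varphi \in C^*$; this $\varphi$ lies in $\Omega^* \cap p^{**}$, so $p^{**}$ cuts the interior of $\Omega^*$ and is not supporting. Assembling the three cases gives assertion (1), and the biduality reduction of the first paragraph then delivers assertion (2). Throughout, the one subtlety I would keep track of is the passage between projective points and their cone lifts, where signs are defined only up to the chosen component of the cone; fixing the lifts of $C$ and $C^*$ once and for all keeps all the inequalities coherent.
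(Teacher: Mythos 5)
Your proof is correct. The paper offers no argument for this statement at all---it is recorded as an \emph{Observation}, treated as standard---so there is no proof to compare against; your cone-lifting argument (strict positivity of dual functionals on the open cone, the supporting hyperplane theorem at a boundary point of $\overline{C}$, the bipolar theorem to rule out the case $[p]\notin\overline{\Omega}$, and biduality $\Omega^{**}=\Omega$ to reduce the second assertion to the first) is the standard justification, and all three cases are handled correctly. One incidental point worth noting: the paper's displayed definition of $\Omega^*$ contains a typo (the condition $\ker\varphi\cap\overline{\Omega}\neq\emptyset$ should read $\ker\varphi\cap\overline{\Omega}=\emptyset$); your definition $C^*=\{\varphi:\varphi>0 \text{ on } \overline{C}\setminus\{0\}\}$ with $\Omega^*=\Pb(C^*)$ is the intended, corrected one, and your argument is coherent with it.
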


Now, if  $\Omega \subset \Pb(\Rb^{n+1})$ is a proper convex open set and $p \in \partial \Omega$, then $p$ is a $C^1$ point of $\partial \Omega$ if and only if there is a unique supporting hyperplane through $p$. So we have the following: 

\begin{observation}
Suppose $\Omega \subset \Pb(\Rb^{n+1})$ is a proper convex open set. Then $\Omega$ is strictly convex if and only if $\partial \Omega^*$ is $C^1$. Likewise, $\Omega^*$ is strictly convex if and only if $\partial \Omega$ is $C^1$.
\end{observation}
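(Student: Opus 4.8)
The plan is to prove the first equivalence, that $\Omega$ is strictly convex if and only if $\partial\Omega^*$ is $C^1$, directly from the preceding Observation together with the pointwise characterization recalled just above it (a boundary point is a $C^1$ point exactly when it carries a unique supporting hyperplane). The second equivalence will then follow formally by applying the first to $\Omega^*$ and using the canonical identification $\Omega^{**}=\Omega$: under it the statement ``$\Omega^*$ is strictly convex iff $\partial(\Omega^*)^*$ is $C^1$'' becomes exactly ``$\Omega^*$ is strictly convex iff $\partial\Omega$ is $C^1$.'' Since the Observation is itself symmetric in $\Omega$ and $\Omega^*$, no genuinely new work is needed for the ``Likewise'' clause.

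The backbone of the argument is the dictionary supplied by the Observation: the supporting hyperplanes of $\Omega$ are exactly the sets $\ker\varphi$ with $\varphi\in\partial\Omega^*$, while the supporting hyperplanes of $\Omega^*$ are exactly the sets $p^{**}$ with $p\in\partial\Omega$ (here one uses that every hyperplane of $\Pb((\Rb^{n+1})^*)$ is of the form $p^{**}$ for a unique $p\in\Pb(\Rb^{n+1})$). The bridge between the two sides is the symmetric incidence relation $\varphi(p)=0$, which reads simultaneously as ``$p$ lies on the supporting hyperplane $\ker\varphi$ of $\Omega$'' and as ``$\varphi$ lies on the supporting hyperplane $p^{**}$ of $\Omega^*$.''

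For the forward direction I would argue by contraposition. Suppose $\partial\Omega^*$ is not $C^1$, so some $\varphi_0\in\partial\Omega^*$ admits two distinct supporting hyperplanes of $\Omega^*$; write them as $p_1^{**}$ and $p_2^{**}$ with $p_1\neq p_2$ in $\partial\Omega$. Passing through $\varphi_0$ means $\varphi_0(p_1)=\varphi_0(p_2)=0$, so the supporting hyperplane $\ker\varphi_0$ of $\Omega$ contains the two distinct boundary points $p_1,p_2$, hence the whole segment $[p_1,p_2]\subset\partial\Omega$, and $\Omega$ is not strictly convex. For the reverse direction, again by contraposition, suppose $\partial\Omega$ contains a nontrivial segment $[p_1,p_2]$. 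I would take a supporting hyperplane $\ker\varphi_0$ of $\Omega$ at a relative interior point of the segment; since the segment lies in the closed half-space bounded by $\ker\varphi_0$ yet meets the bounding hyperplane at an interior point, the entire segment lies in $\ker\varphi_0$, giving $\varphi_0(p_1)=\varphi_0(p_2)=0$. Then $p_1^{**}$ and $p_2^{**}$ are two distinct supporting hyperplanes of $\Omega^*$ through $\varphi_0$, so $\varphi_0$ is not a $C^1$ point and $\partial\Omega^*$ is not $C^1$.

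The individual steps are elementary, so the main obstacle is bookkeeping rather than depth: one must verify that every supporting hyperplane of $\Omega^*$ genuinely arises as some $p^{**}$ with $p\in\partial\Omega$ (so that ``two supporting hyperplanes at $\varphi_0$'' really produces two distinct boundary points of $\Omega$), that $p_1\neq p_2$ forces $p_1^{**}\neq p_2^{**}$ (immediate, since $p\mapsto p^{**}$ is the canonical duality bijection on hyperplanes), and, in the reverse direction, the small convexity fact that a supporting hyperplane through the relative interior of a boundary segment must contain the whole segment. Once these are pinned down the two contrapositives close up symmetrically, and the ``Likewise'' clause is obtained for free by duality.
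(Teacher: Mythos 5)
Your proof is correct and takes exactly the route the paper intends: the paper states this observation without further argument, treating it as an immediate consequence of the preceding observation (boundary points of one set correspond to supporting hyperplanes of the dual) together with the characterization of $C^1$ points via unique supporting hyperplanes, which is precisely the dictionary your two contrapositives spell out. Your write-up simply makes explicit the small convexity facts (a segment in $\overline{\Omega}$ touching a supporting hyperplane at an interior point lies entirely in it, and $\Omega^{**}=\Omega$ for the ``Likewise'' clause) that the paper leaves implicit.
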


Finally, any element $\gamma \in \mathrm{Aut}(\Omega)$ acts on $\Omega^*$ via the action on the dual of $\Rb^{n+1}$, that is $\gamma^*(\varphi) = \varphi \circ\gamma^{-1}$, where $\varphi \in (\Rb^{n+1})^*$. We will denote by $\Gamma^* \leq \Aut(\Omega^*)$ the dual group of any subgroup $\Gamma$ of $\mathrm{Aut}(\Omega)$.

%
%
%
%
%

\subsubsection*{Margulis constant}

By \cite[Th\'eor\`eme 1]{CM_Margulis} or \cite[Theorem 0.1]{CLT2015}, in any dimension $n$, there exists a positive constant (called a \emph{Margulis constant}) $\varepsilon_n>0$, such that, for every proper convex open subset $\Omega$ of $\Pb(\Rb^{n+1})$, for every $x\in \Omega$, and for every $\Gamma$ discrete subgroup of $\mathrm{Aut}(\Omega)$, if $\Gamma_{\varepsilon_n}(x)$ is the group generated by the elements of $\Gamma$ that move $x$ at a distance less than $\varepsilon_n$ then $\Gamma_{\varepsilon_n}(x)$ is virtually nilpotent.

The \emph{thick part} of $\Omega$ is the closed subset of points $x \in \Omega$ such that $\Gamma_{\varepsilon_n}(x) = \{ 1\}$. The thick part of $\Gamma \backslash \Omega$ is the quotient of the thick part of $\Omega$ by $\Gamma$. The \emph{thin part} is the complement of the thick part.

If $x$ is inside the thick part then, by definition, the restriction to the ball of radius $\frac{\varepsilon_n}{2}$ of the projection $\Omega \rightarrow \Gamma \backslash \Omega$ is injective. The theorem of Benz\'ecri \cite{Benz1960} mentioned during the proof of Lemma \ref{lem:bounded_curvature} implies that the $\mu_B$-volume of the $H_{\Omega}$-ball of center $x$ and radius $\frac{\varepsilon_n}{2}$ is bounded from below by a constant independent of $x$ or $\Omega$. So, if the quotient $\Gamma \backslash \Omega$ has finite volume then the thick part of $\Gamma \backslash \Omega$ is compact, since it can contain only finitely many disjoint balls of radius $\frac{\varepsilon_n}{2}$. 

\subsubsection*{About the automorphisms of $\Omega$}
Let $\Gamma$ be a torsion-free finite type discrete subgroup of $\textrm{Aut}(\Omega)$. Suppose that $\Omega$ is strictly convex or with $C^1$-boundary, then each non-trivial element $\gamma \in \Gamma$ is either \emph{hyperbolic}, that is, $\gamma$ fixes exactly two points of $\overline{\Omega}$ which are on the boundary, or is \emph{parabolic}, that is, it fixes exactly one point of $\overline{\Omega}$ which is on the boundary (see \cite[Th\'eor\`eme 3.3]{CM_geo_fini},  or \cite[Proposition 2.8]{CLT2015}).

Suppose $G \leq \Gamma$ is a subgroup generated by two elements $\gamma,\delta$. Then,
\begin{enumerate}
\item either $G$ is virtually nilpotent and
\begin{enumerate}
\item either every element of $G$ is hyperbolic and has the same fixed points,
\item or every element of $G$ is parabolic and has the same fixed point; 
\end{enumerate}
\item or $G$ contains a free group and no point in $\overline{\Omega}$ is fixed by every element of $G$
\end{enumerate}
(see \cite[section 3.5]{CM_geo_fini},  or \cite[Proposition 4.13 and 4.14]{CLT2015}).

Let $\varepsilon >0$ be a Margulis constant. Let $\Lambda$ be a maximal parabolic subgroup of $\Gamma$, that is, a non-trivial stabilizer of a point $p\in \partial \Omega$ which contains one parabolic element (and hence, according to the above remarks, contains only parabolic elements). We define
$$
\Omega_{\varepsilon}(\Lambda) = \{ x \in \Omega \, | \,\exists \gamma \in \Lambda ,\, H_{\Omega}(x,\gamma x) \leqslant \varepsilon \}.
$$
This region is $p$-star-shaped, that is, for every $x \in \Omega_{\varepsilon}(\Lambda)$ the line segment joining $x$ to $p$ is contained in $ \Omega_{\varepsilon}(\Lambda)$. Moreover, if $\Lambda,\Lambda'$ are two distinct maximal parabolic subgroups of $\Gamma$ then $\Omega_{\varepsilon}(\Lambda) \cap \Omega_{\varepsilon}(\Lambda') =\varnothing$ (see \cite[Lemme 6.2]{CM_geo_fini}).

\begin{proof}[Proof of Proposition~\ref{p:vice_versa}]
By \cite[Corollary 6.7]{CLT2015}, the quotient $\Gamma \backslash \Omega$ has finite volume if and only if the dual quotient $\Gamma^* \backslash \Omega^*$ also has finite volume. Hence, we only have to show that if $\Omega$ is strictly convex and $\Gamma \backslash \Omega$ has finite volume then $\partial \Omega$ is of class $C^1$.

Suppose that $\Omega$ is strictly convex. We want to use \cite[Theorem 0.15]{CLT2015} to conclude that $\partial \Omega$ is of class $C^1$. In order to apply that theorem, we need to prove that $\Gamma \backslash \Omega$ is topologically tame and that the holonomy of each boundary component is parabolic.


Fix $\varepsilon >0$ a Margulis constant. Since $\Gamma \backslash \Omega$ has finite volume, the thick part of $\Gamma \backslash \Omega$ is compact (by the above remarks). Since $\Omega$ is strictly convex, a connected component $\mathcal{H}$ of the thin part is of one of two types. Either it is a lift of a Margulis tube, that is, a lift of a tubular neighborhood of a closed geodesic of length less than $\varepsilon$. Or it is preserved by a maximal parabolic subgroup $\Lambda$ of $\Gamma$ and $\mathcal{H}=\Omega_{\varepsilon}(\Lambda)$ (see \cite[Lemma 8.2]{CLT2015}).

Since there are only a finite number of geodesics of length less than $\varepsilon$, the thick part of $\Gamma \backslash \Omega$ together with all the Margulis tubes is still compact. Hence, if $\mathcal{H}$ is a connected component of the thin part and not a lift of a Margulis tube, the action of $\Lambda$ on $\partial \mathcal{H}$ is cocompact. Since, $\mathcal{H}$ is $p$-star shaped, the quotient $\Gamma \backslash \Omega$ is topologically tame and the holonomy of each boundary component is parabolic. We can thus apply \cite[Theorem 0.15]{CLT2015} to conclude that $\partial \Omega$ is of class $C^1$.
\end{proof}

\appendix

\section{Proof of Proposition~\ref{prop:heat_kernel}}\label{sec:heat_kernel}

For the rest of the section suppose that $(X,g)$ is a complete non-compact simply connected Riemannian manifold with $\Ric \geq -(n-1)$ and bounded sectional curvature. 

For a function $f \colon X \rightarrow \Rb$ define the function $P_t(f) \colon X \rightarrow \Rb$ by 
\begin{equation*}
P_t(f) (x) = \int_X p_t(x,y) f(y) dy. 
\end{equation*}

We will need some estimates on the heat kernel: 

\begin{lemma}\cite[Theorem 4 and Theorem 6]{CLY1981}\label{lem:hk_est}
With the notation above, for any $T > 0$, there exists $C > 0$ such that 
\begin{equation*}
p_t(x,y) \leq Ct^{-\frac{n}{2}}\exp \left( \frac{-d(x,y)^2}{Ct} \right)
\end{equation*}
and 
\begin{equation*}
\norm{\nabla_x p_t(x,y)}  \leq Ct^{-\frac{n+1}{2}}\exp \left( \frac{-d(x,y)^2}{Ct} \right)
\end{equation*}
for all $t \in (0,T]$ and $x,y \in X$.
\end{lemma}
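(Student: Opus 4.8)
The plan is to follow the structure of the Cheng--Li--Yau argument, organized into three steps: an on-diagonal bound, an off-diagonal Gaussian upgrade, and finally the gradient estimate. Since $X$ is non-compact, the real content is that every constant can be chosen uniformly in the base point, and this is exactly where the curvature hypotheses enter. \textbf{Step 1 (on-diagonal bound).} First I would show that there is a constant $C_0 = C_0(n,T)$ with $p_t(x,x) \leq C_0 t^{-n/2}$ for all $x \in X$ and all $t \in (0,T]$. The hypothesis $\Ric \geq -(n-1)$ gives, via Bishop--Gromov, local volume doubling with a doubling constant depending only on $n$ and the scale, and, via Buser's inequality, a local $L^2$-Poincar\'e inequality with the same uniformity. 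By the Grigor'yan--Saloff-Coste theory these two ingredients yield a local Faber--Krahn (equivalently Nash) inequality with constants depending only on $n$ and $T$, and the usual Nash--Moser iteration then produces the on-diagonal bound with a constant uniform over $X$.

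\textbf{Step 2 (Gaussian off-diagonal decay).} On any complete manifold the semigroup $P_t$ satisfies the Davies--Gaffney estimate $\abs{\ip{P_t f, g}} \leq \norm{f}_2 \norm{g}_2 \exp(-d(A,B)^2/(4t))$ whenever $f,g$ are supported in sets $A,B$; this follows from finite propagation speed and needs no curvature assumption. Feeding the on-diagonal bound of Step 1 into Grigor'yan's integrated maximum principle then upgrades it to the pointwise Gaussian bound $p_t(x,y) \leq C_1 t^{-n/2}\exp(-d(x,y)^2/(C_1 t))$ for $t \in (0,T]$, with $C_1 = C_1(n,T)$. I note that this step, like the first, uses only the lower Ricci bound.

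\textbf{Step 3 (gradient estimate).} Fix $y$ and set $u(s,x) = p_s(x,y)$, a positive solution of the heat equation. On a parabolic cylinder $Q$ of size $\sqrt{t}$ centered at $(t,x)$, interior parabolic regularity gives $\norm{\nabla_x u(t,x)} \leq C t^{-1/2}\sup_Q u$; since $u$ on $Q$ is controlled by the Gaussian bound of Step 2 (the time variable and the distance to $y$ being comparable to $t$ and to $d(x,y)$ throughout $Q$), this yields $\norm{\nabla_x p_t(x,y)} \leq C t^{-(n+1)/2}\exp(-d(x,y)^2/(Ct))$. The point is that bounded sectional curvature makes the geometry uniformly controlled at scale $\sqrt{t}$ for $t \leq T$, so the interior estimate holds with a constant independent of the base point $x$.

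The hard part will be the uniformity of the constants across the non-compact manifold, and pinning down exactly where each hypothesis is needed. Steps 1 and 2 go through under $\Ric \geq -(n-1)$ alone, because local volume doubling and the local Poincar\'e inequality depend only on the lower Ricci bound and the scale. The bounded-curvature hypothesis is genuinely required only in Step 3, to guarantee a uniform lower bound on the harmonic (or $C^{1,\alpha}$) radius and hence uniform interior parabolic estimates; without it the gradient bound could degenerate as $x \to \infty$. This matches the remark in the main text that bounded curvature enters the whole argument only through these heat-kernel estimates.
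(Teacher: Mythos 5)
The paper does not actually prove this lemma: it is imported verbatim from \cite{CLY1981}, so there is no internal argument to compare yours against, and your sketch must be judged on its own terms. It follows the modern Grigor'yan--Saloff-Coste route rather than the comparison-theorem arguments of Cheng--Li--Yau, which is reasonable in principle, but Step 1 contains a genuine gap that everything after it inherits. Under $\Ric \geq -(n-1)$, Bishop--Gromov and Buser do give local doubling and a local Poincar\'e inequality, and the Grigor'yan--Saloff-Coste machinery then gives an on-diagonal bound --- but it is the \emph{volume-normalized} bound
\begin{equation*}
p_t(x,x) \leq \frac{C}{\Vol\left(B_{\sqrt{t}}(x)\right)}, \qquad t \in (0,T],
\end{equation*}
not $p_t(x,x) \leq C t^{-n/2}$. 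Passing from the former to the latter requires the uniform non-collapsing estimate $\Vol(B_r(x)) \geq c\,r^n$ for all $x \in X$ and $r \leq \sqrt{T}$, and this follows neither from the lower Ricci bound (Ricci lower bounds control volumes from \emph{above}, not below) nor from adding bounded sectional curvature and simple connectivity. Concretely, let $X$ be the simply connected surface of revolution obtained by smoothly capping off the hyperbolic cusp with profile $r(s) = e^{-s}$: it is complete, simply connected, has $\abs{K} \leq 1$ (so $\Ric \geq -(n-1)$ for $n=2$), yet $\Vol(B_1(x)) \to 0$ as $x$ moves down the cusp; taking a Riemannian product with $\Rb^{n-2}$ gives higher-dimensional examples. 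Since a lower Ricci bound also yields the on-diagonal \emph{lower} bound $p_t(x,x) \geq c \Vol(B_{\sqrt{t}}(x))^{-1} e^{-Ct}$ (Li--Yau parabolic Harnack), on such a manifold $\sup_x p_T(x,x) = \infty$, so no constant $C$ --- even one depending on the manifold --- makes your Step 1 conclusion true. In particular your closing claim that ``Steps 1 and 2 go through under $\Ric \geq -(n-1)$ alone'' is wrong, and more importantly the fully uniform statement cannot be recovered from the standing hypotheses at all: some non-collapsing assumption (e.g.\ a positive lower bound on the injectivity radius, i.e.\ genuine bounded geometry, which is the setting in which such globally uniform $t^{-n/2}$ bounds live) is needed.

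Two further remarks. First, what your Steps 1--2 genuinely deliver under $\Ric \geq -(n-1)$ is the Gaussian bound $p_t(x,y) \leq C \Vol(B_{\sqrt{t}}(x))^{-1} \exp\left(-d(x,y)^2/(Ct)\right)$ for $t \leq T$; for $x$ confined to a compact set this does reduce to the form in the Lemma, since non-collapsing is automatic there. That locally uniform version is in fact all the paper ever uses (in the proof of Proposition~\ref{prop:heat_kernel} the estimates are only integrated against compactly supported test functions), so your approach can be salvaged for the application, but it does not prove the Lemma as stated. Second, your justification of Step 3 is also off: bounded sectional curvature alone does \emph{not} give a uniform lower bound on the harmonic radius (that, again, needs non-collapsing). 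The standard fixes are either to pull the solution back by $\exp_y$ to a ball in $T_y X$ --- by Rauch, $\exp_y$ is an immersion at a scale depending only on the curvature bound, and the pulled-back geometry is uniformly controlled --- and apply interior parabolic estimates there, or to avoid charts entirely and use maximum-principle gradient estimates (Li--Yau, Hamilton, Souplet--Zhang), which require only a Ricci lower bound and derive the second inequality of the Lemma directly from the first.
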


\begin{proof}[Proof of Proposition~\ref{prop:heat_kernel}]
Recall that $F_t(x,\xi) = P_t(\xi)(x)$. We claim that for any $\xi \in \partial \wh{X}$ 
\begin{equation*}
(\partial_t - \Delta_x)F_t(x,\xi) = 0
\end{equation*}
in the sense of distributions. Once this is established part (1) and part (2) follow from standard regularity results (see for instance~\cite[Theorem 7.4]{G2009}). 

Let $\phi \in C_c^\infty(X \times \Rb_+)$. By Lemma~\ref{lem:hk_est}, $\Delta_x( \phi(x,t)) p_t(x,y) \xi(y)$ and $\partial_t (\phi(x,t)) p_t(x,y) \xi(y)$ are in  $L^1(X \times X \times \Rb_+, dxdydt)$. Then, using Fubini and the fact that $\partial_t p_t(x,y) = \Delta_x p_t(x,y)$, we obtain
\begin{align*}
\int_{X\times \Rb_+}&  \Delta_x \phi(x,t) P_t(\xi)(x) dxdt = \int_X \left( \int_{X\times \Rb_+} \Delta_x \phi(x,t) p_t(x,y) dxdt\right) \xi(y) dy \\
& = \int_X \left( \int_{X\times \Rb_+} \phi(x,t) \Delta_x  p_t(x,y) dxdt\right) \xi(y) dy = \int_X \left( \int_{X\times \Rb_+} \phi(x,t) \partial_t  p_t(x,y) dxdt\right) \xi(y) dy \\
&= -\int_X \left( \int_{X\times \Rb_+} \partial_t \phi(x,t) p_t(x,y) dxdt\right) \xi(y) dy = -\int_{X\times \Rb_+} \partial_t \phi(x,t) P_t(\xi)(x) dxdt.
\end{align*}
\indent Thus 
\begin{equation*}
(\partial_t - \Delta_x)F_t(x,\xi) = 0
\end{equation*}
in the sense of distributions. So part (1) and (2) are established. 

Now, by~\cite{BE1984}, since $\Ric \geq -(n-1)$, if $f \in C_c^\infty(X)$ then 
\begin{equation*}
\norm{\nabla P_t(f)}_{\infty} \leq e^{(n-1)t} \norm{\nabla f}_{\infty}.
\end{equation*}
Moreover, for any $\xi \in \partial \wh{X}$, there exists a sequence $f_m \in C_c^\infty(X)$ such that $f_m$ converges to $\xi$ locally uniformly and 
$\norm{\nabla f_m}_{\infty} \rightarrow 1$ (see, for instance, \cite{AFR2007}). Hence, each $P_t(f_m)$ is $e^{(n-1)t} \norm{\nabla f_m}_{\infty}$-Lipschitz. Moreover, by Lemma~\ref{lem:hk_est} and the dominated convergence theorem, $P_t(f_m)(x) \rightarrow P_t(\xi)(x)$ for all $x \in X$. Thus, $P_t(\xi)$ is $e^{(n-1)t}$-Lipschitz and 
\begin{equation*}
\norm{\nabla_x F_t(x,\xi)} \leq e^{(n-1)t}.
\end{equation*}

Now fix some non-negative $\phi \in C_c^\infty(X)$. Then we have
\begin{multline*}
\int_{X}  \Delta_x \phi(x) P_t(\xi)(x) dx  = \int_X \left( \int_{X} \phi(x) \Delta_x  p_t(x,y) dx\right) \xi(y) dy \\
 =  \int_X \left( \int_{X} \phi(x) \Delta_y  p_t(x,y) dx\right) \xi(y) dy = \int_X \Delta_y P_t(\phi)(y) \xi(y) dy.
\end{multline*}
For $r > 0$, let $\varphi_r : X \rightarrow \Rb$ be as in Lemma~\ref{lem:nice_cut_off}. Then 
\begin{multline*}
\int_X \Delta_y  P_t(\phi)(y) \xi(y) dy = \int_X \Delta_y \Big( \varphi_r(y) P_t(\phi)(y)\Big) \xi(y) dy + \int_X \Delta_y \Big( (1-\varphi_r)(y) P_t(\phi)(y)\Big) \xi(y) dy \\
 \leq (n-1) \int_X \varphi_r(y) P_t(\phi)(y) dy + \int_X \Delta_y \Big( (1-\varphi_r)(y) P_t(\phi)(y)\Big) \xi(y) dy.
\end{multline*}
Using the dominated convergence theorem once again, we have
\begin{equation*}
\lim_{r \rightarrow \infty}  \int_X \varphi_r(y) P_t(\phi)(y) dy = \int_X P_t(\phi)(y) dy = \int_X \phi(x) dx.
\end{equation*}

\indent Moreover, since integration by parts holds for Lipschitz functions,
\begin{multline*}
\abs{ \int_X \Delta_y \Big( (1-\varphi_r)(y) P_t(\phi)(y)\Big) \xi(y) dy  }=  \abs{\int_X\ip{ \nabla_y \Big( (1-\varphi_r)(y) P_t(\phi)(y)\Big), \nabla_y \xi(y)} dy} \\
  \leq \frac{C}{r} \int_X P_t(\phi)(y) dy + \int_{ X \setminus B_r(o)} \norm{\nabla P_t(\phi)(y)} dy.
\end{multline*}
Now,
\begin{equation*}
\nabla P_t(\phi)(y) = \int_X \nabla_y p_t(x,y) \phi(x) dx,
\end{equation*}
and so, by Lemma~\ref{lem:hk_est},
\begin{equation*}
\norm{\nabla P_t(\phi)(y)} \in L^1(X, dy).
\end{equation*}
Thus,
\begin{equation*}
\lim_{r \rightarrow \infty} \int_{ X \setminus B_r(o)} \norm{\nabla P_t(\phi)(y)} dy = 0.
\end{equation*}
Which implies that
\begin{align*}
\lim_{r \rightarrow \infty}  \int_X & \Delta_y \Big( (1-\varphi_r)(y) P_t(\phi)(y)\Big) \xi(y) dy = 0,
\end{align*}
and thus 
\begin{equation*}
\int_{X}  \Delta_x \phi(x) P_t(\xi)(x) dx \leq (n-1) \int_X \phi(x) dx.
\end{equation*}
Since $\xi \in \partial \wh{X}$ was arbitrary and $\phi \in C_c^\infty(X)$ is an arbitrary non-negative function, we see that 
\begin{equation*}
\Delta_x F_t(x,\xi) \leq n-1. \qedhere
\end{equation*}
\end{proof}

\bibliographystyle{alpha}
\bibliography{geom}

\end{document}